\newcommand\cal{\mathcal}
\def\qmod#1#2{{\hbox{}^{\displaystyle{#1}}}\!\big/\!\hbox{}_{
\displaystyle{#2}}}
\def\resto#1#2{{
#1\hskip 0.4ex\vline_{\hskip 0.2ex\raisebox{-0,2ex}
{{${\scriptstyle #2}$}}}}}
\def\C{{\mathbb C}}
\def\P{{\mathbb P}}
\def\R{{\mathbb R}}
\def\Z{{\mathbb Z}}
\def\union{\mathop{\bigcup}}
\def\textmap#1{\mathop{\vbox{\ialign{
                                 ##\crcr
     ${\scriptstyle\hfil\;\;#1\;\;\hfil}$\crcr
     \noalign{\kern 1pt\nointerlineskip}
     \rightarrowfill\crcr}}\;}}
\def\textlmap#1{\mathop{\vbox{\ialign{
                                 ##\crcr
     ${\scriptstyle\hfil\;\;#1\;\;\hfil}$\crcr
     \noalign{\kern-1pt\nointerlineskip}
     \leftarrowfill\crcr}}\;}}
\font\tenmeuf=eufm10
\font\sevenmeuf=eufm7
\font\fivemeuf=eufm5
\def\germ{\fam\meuffam\tenmeuf}
\def\g{{\germ g}}
\def\hg{{\germ h}}
\def\kg{{\germ k}}
\def\lg{{\germ l}}
\def\Ag{{\germ A}}
\def\Ig{{\germ I}}
\newtheorem{sz}{Satz}[section]
\newtheorem{thry}[sz]{Theorem}
\newtheorem{pr}[sz]{Proposition}
\newtheorem{re}[sz]{Remark}
\newtheorem{co}[sz]{Corollary}
\newtheorem{lm}[sz]{Lemma}
\begin{document}
\def\Pr{{\rm Pr}}
\def\tr{{\rm Tr}}
\def\End{{\rm End}}
\def\Aut{{\rm Aut}}
\def\Spin{{\rm Spin}}
\def\U{{\rm U}}
\def\SU{{\rm SU}}
\def\SO{{\rm SO}}
\def\PU{{\rm PU}}
\def\GL{{\rm GL}}
\def\spin{{\rm spin}}
\def\su{{\rm su}}
\def\so{{\rm so}}
\def\ub{\underbar}
\def\pu{{\rm pu}}
\def\Pic{{\rm Pic}}
\def\Iso{{\rm Iso}}
\def\NS{{\rm NS}}
\def\deg{{\rm deg}}
\def\Hom{{\rm Hom}}
\def\Aut{{\rm Aut}}
\def\h{{\germ h}}
\def\Herm{{\rm Herm}}
\def\Vol{{\rm Vol}}
\def\pf{{\bf Proof: }}
\def\id{{\rm id}}
\def\i{{\germ i}}
\def\im{{\rm im}}
\def\rk{{\rm rk}}
\def\ad{{\rm ad}}
\def\h{{\bf H}}
\def\coker{{\rm coker}}
%%%%%%%%%%%%%%%%%%%%%%%%%%
\def\dbar{\bar{\partial}}
\def\Lo{{\Lambda_g}}
\def\niq{=\kern-.18cm /\kern.08cm}
%%%%%%%%%%%%%%%%%%%
\def\Ad{{\rm Ad}}
\def\RSU{\R SU}
\def\ad{{\rm ad}}
\def\dva{\bar\partial_A}
\def\da{\partial_A}
\def\p{\partial\bar\partial}
\def\sp{\Sigma^{+}}
\def\sm{\Sigma^{-}}
\def\spm{\Sigma^{\pm}}
\def\smp{\Sigma^{\mp}}
\def\Tors{{\rm Tors}}
\def\st{{\rm st}}
\def\s{{\rm s}}
\def\oo{{\scriptstyle{\cal O}}}
\def\ooo{{\scriptscriptstyle{\cal O}}}
\def\sw{Seiberg-Witten }
\def\pa{\partial_A\bar\partial_A}
\def\Dr{{\raisebox{0.15ex}{$\not$}}{\hskip -1pt {D}}}
\def\gr{{\scriptscriptstyle|}\hskip -4pt{\g}}
\def\subsetint{{\  {\subset}\hskip -2.45mm{\raisebox{.28ex}
{$\scriptscriptstyle\subset$}}\ }}
\def\ra{\rightarrow}
\def\pst{{\rm pst}}
\def\sst{{\rm sst}}

\def\kod{{\rm kod}}
\def\degmax{{\rm degmax}}
\def\red{{\rm red}}
\def\ev{\mathrm{ev}}
\def\TE{{^{\cal T}}\hspace{-1.1mm}{\cal E}}
\def\TS{ {^{\cal T}}\hspace{-1.1mm}{\cal S}}

\title[Variation formula and compact subspaces of ${\cal M}^\st$]{A variation formula for the determinant line bundle. Compact subspaces of moduli spaces of stable bundles over class VII surfaces}

\author{Andrei Teleman}
\date\today 
\thanks{The author has been partially supported by the ANR project MNGNK, decision 
Nr.  ANR-10-BLAN-0118}
\address{CMI, LATP,  Aix-Marseille Université,   39  Rue F. Joliot-Curie,  13453
Marseille Cedex 13, France} \email{teleman@cmi.univ-mrs.fr}

\begin{abstract}
This article deals with two topics: the first, which has a  general character, is a variation formula for the the determinant line bundle  in non-Kählerian geometry. This formula, which is a consequence of the non-Kählerian version of the Grothendieck-Riemann Roch theorem proved recently by Bismut \cite{Bi}, gives the variation of the determinant line bundle corresponding to a perturbation of a Fourier-Mukai kernel ${\cal E}$ on a product $B\times X$  by a unitary flat line bundle on the fiber $X$. When this fiber is a complex surface and ${\cal E}$ is a   holomorphic 2-bundle, the result can be interpreted as a Donaldson invariant.  

The second topic concerns a geometric application of our variation formula, namely we will study compact complex subspaces  of  the moduli spaces of stable bundles   considered in  our program for proving existence of curves on minimal class VII surfaces \cite{Te4}.  Such a moduli space comes with a distinguished point $a=[{\cal A}]$ corresponding to the canonical extension ${\cal A}$ of $X$ \cite{Te3}, \cite{Te4}. The compact subspaces $Y\subset {\cal M}^\st$ containing this distinguished point play an important role in our program. We will prove a non-existence result:  there exists no compact complex subspace of positive dimension $Y\subset {\cal M}^\st$ containing $a$   with an open neighborhood $a\in Y_a\subset Y$   such that $Y_a\setminus\{a\}$ consists only of non-filtrable bundles. In other words, within any compact complex subspace of positive dimension $Y\subset {\cal M}^\st$ containing $a$, the point $a$ can be approached by filtrable bundles.   Specializing to the case $b_2=2$ we obtain a new way to complete the proof of the main result of \cite{Te4}: any minimal class VII surface with $b_2=2$ has a cycle of curves. Applications to class VII surfaces with higher $b_2$ will be  be discussed in a forthcoming article.\\

\end{abstract}

\maketitle

\tableofcontents

\section{Introduction}\label{intro}

 Let $B$, $X$ be compact complex manifolds, $p:B\times X\to B$, $q:B\times X\to X$ the two projections and   ${\cal E}\in \mathrm{Coh}(B\times X)$. The Fourier-Mukai transform of kernel ${\cal E}$ is the functor  $\phi_{\cal E}:\mathrm{Coh}(X)\to \mathrm{Gr}(\mathrm{Coh}(B))
$ from the category of coherent sheaves on $X$ to the category  of graded coherent sheaves on $B$ defined by
$$\phi_{\mathcal E}({\cal F}):=R^\bullet p_*(\mathcal{E}\otimes  q^*({\mathcal F}))\ . 
 \eqno{(FM)}
$$ 
In projective algebraic geometry  $\phi_{\mathcal E}$ can be lifted to a functor 
$$\Phi_{\mathcal {E}}:\mathrm{D}^b(X)\to \mathrm{D}^b(B)
$$ 
between   the bounded derived categories of  coherent sheaves on the two manifolds, and has sense  for a kernel ${\cal E}\in \mathrm{D}^b(B\times X)$.
Such functors   are extensively used in the literature in order to compare the derived categories associated with two projective varieties. 

The non-Kählerian version of the Grothendieck-Riemann Roch theorem \cite{Bi}    computes the Chern character   $\mathrm{ch}(\phi_{\mathcal E}({\cal F})) $ in terms of the Chern classes of the kernel $\cal{E}$ and of the variable sheaf $\cal F$ {\it in the Bott-Chern cohomology} 
of $B$  assuming that  $\cal{E}$, $\cal F$ and the direct images $R^i p_*(\mathcal{E}\otimes  q^*({\mathcal F}))$ are locally free.   Conjecturally these assumptions are not necessary.

We denote by ${\cal L}(X)$, ${\cal L}(B)$ the categories of holomorphic line bundles (with isomorphisms as morphisms) on $X$ and $B$ respectively. We will use the simpler functor $\delta_{\cal E}:{\cal L}(X)\to {\cal L}(B)$ given by
$$\delta_{\cal E}({\cal F}):=\det(R^\bullet p_*(\mathcal{E}\otimes q^*({\mathcal F})))=\lambda_{\mathcal{E}\otimes q^*(\mathcal {F})}
$$  
obtained by composing $\phi_{\cal E}$ with the determinant functor $\lambda$ \cite{Bi}. The Chern class of  $\delta_{\cal E}({\cal F})$ in Bott-Chern cohomology can be computed  using Bismut results (see Theorem 11.8.1 \cite{Bi}) in full generality, without any assumption on the two manifolds $X$, $B$ or the direct images  $R^i p_*(\mathcal{E}\otimes  q^*({\mathcal L}))$.

The functor $\delta_{\cal E}$ can be regarded as a method for     constructing   holomorphic line bundles on an   unknown manifold  $B$ using a fixed kernel ${\cal E}\in\mathrm{Coh}( B\times X)$ and  variable line bundles  ${\cal L}$ on the known manifold $X$.
Passing to isomorphism classes  one obtains a holomorphic map  
$$[\delta_{\cal E}]:\Pic(X)\to {\rm Pic}(B)\ ,\ [{\cal L}]\mapsto  
[\lambda_{\mathcal{E}\otimes q^*({\mathcal L})}]\in\Pic(B)\ .$$
between Abelian complex groups. The holomorphy of $[\delta_{\cal E}]$ is a consequence of the base change theorem for the determinant line bundle in complex geometry (see Proposition 2.4 \cite{Gri}).

The  starting point of our first  topic is the natural questions: What can be said about this holomorphic map? The question is particularly interesting in the non-Kählerian framework  because  the connected component $\Pic^0(X)$ of the origin in $\Pic(X)$ is not necessary compact, so a priori $[\delta_{\cal E}](\Pic^0(X))$ might be a very complicated non-closed subset of $\Pic(B)$. Although very natural, the problem is difficult, because even the simple operation of perturbing the kernel ${\cal E}$ with an $n$-th root of $[{\cal O}_X]\in\Pic^0(X)$ can change dramatically the direct images $R^i p_*(\mathcal{E})$.

We will prove a variation formula, which computes the differential  of the restriction of $[\delta_{\cal E}]$ to the subgroup of unitary flat line bundles. In other words our variation formula computes the infinitesimal variation of $[{\cal L}_{\cal E}]\in\Pic(B)$ when the kernel  ${\cal E}$ is perturbed by a unitary flat line bundle on the fiber $X$. We will see that in the particular case when $X$ is a complex surface and ${\cal E}$ is a holomorphic 2 bundle satisfying a technical condition,  the result can be interpreted as a Donaldson invariant.
We acknowledge that a similar variation formula  has recently been proved by Grivaux \cite{Gri}.    Grivaux's formula is an identity in Deligne cohomology, which gives  the differential of $[\delta_{\cal E}]$ on {\it whole} $\Pic(X)$ as  the fibre integration of a Deligne cohomology class on $B\times X$. Fibre integration in Deligne cohomology is a difficult operation.  Our variation formula is slightly weaker, but it makes use only of classical fibre integration of forms, which allowed us to express the result as a Donaldson invariant.
\\

The second part of the article gives an application of our variation formula, application which concerns the geometry of the moduli spaces intervening  in our program for proving existence of curves on class VII surfaces. Let $X$ be a minimal class VII surface with $b_2(X)>0$, $E$ a differentiable rank 2-bundle on $X$ with $c_2(E)=0$ and $\det(E)=K_X$ (the underlying differentiable line bundle of the canonical line bundle ${\cal K}_X$), and $g$ a Gauduchon metric on $X$. We denote by ${\cal M}^\st$ the moduli space of stable structures on $X$ inducing the holomorphic structure ${\cal K}_X$ on $\det(E)$ modulo the complex gauge group $\Gamma(X,\mathrm{SL}(E))$.  By definition the canonical extension  of $X$ is the essentially unique non-trivial extension of the form
\begin{equation}\label{A}
0\to {\cal K}_X\textmap{i_0} {\cal A}\textmap{p_0} {\cal O}_X\to 0\ .
\end{equation}

Excepting for very special Kato surfaces, the canonical extension ${\cal A}$ is stable for suitable Gauduchon metrics \cite{Te3}, \cite{Te4}. Therefore, choosing such a Gauduchon metric we obtain a distinguished point $a=[{\cal A}]\in {\cal M}^\st$.  We shall prove that there exists no compact complex subspace of positive dimension $Y\subset {\cal M}^\st $ containing $a$   with an open neighborhood $a\in Y_a\subset Y$   such that $Y_a\setminus\{a\}$ consists only of non-filtrable bundles. In other words, within any compact complex subspace of positive dimension $Y\subset {\cal M}^\st$ containing $a$, the point $a$ can be approached by filtrable bundles. This result can be regarded as a version --  valid  for minimal class VII surfaces with arbitrary $b_2>0$ -- of   Corollary 5.3 of \cite{Te2} proved for minimal class VII surfaces with $b_2=1$.  The   proof uses our variation formula, a Leray spectral sequence argument and the results in \cite{Te6} on the torsion of the first direct image of a locally free sheaf.

At the end of the paper we explain how this non-existence theorem can be used to give a new proof of a difficult result used in \cite{Te4} for proving the existence of a cycle on class VII surfaces with $b_2=2$.  Applications to class VII surfaces with higher $b_2$ will be  be discussed in a forthcoming article.

\section{A variation formula for the determinant line bundle in non-Kählerian geometry}
\label{variation}

\subsection{Unitary flat line bundles and the degree map in non-Kähler geometry}
Let $X$ be a compact complex manifold. Via   the de Rham and Dolbeault isomorphisms the canonical $\R$-linear map  $\hg:H^1(X,i\R)\to H^1({\cal O}_X)$ is given by
\begin{equation}\label{hg}\hg([\alpha])=[\alpha^{0,1}]\ \forall \alpha\in  Z^1(X,i\R)\ .
\end{equation}
This map is injective (see Lemma 2.3 \cite{Te3}). This implies that the canonical map $2\pi iH^1(X, \Z)\to H^1({\cal O}_X)$ (which obviously factorizes through $\hg$) is also injective and has closed image, hence the Abelian group $\Pic^0(X)\simeq H^1({\cal O}_X)/2\pi i H^1(X,\Z)$ of isomorphism classes of topologically trivial holomorphic line bundles on $X$ has a natural complex Lie group structure induced from $H^1({\cal O}_X)$. 
In the general non-Kählerian framework this Abelian complex Lie group is not necessarily compact.

  Recall that we have a canonical short  exact sequence
\begin{equation}\label{NSES} 0\to\Pic^0(X)\hookrightarrow\Pic(X)\textmap{c_1} \NS(X)\to 0
\end{equation}
where the Neron-Severi group $\NS(X)$ of $X$ is defined by
$$\NS(X):=\ker( H^2(X,\Z)\to H^{0,2}(X,\C))\ .$$
A 2-cohomology class belongs to $\NS(X)$ if and only if  its image in $H^2_{\rm DR}(X,\C)$  has a representative of type (1,1). The exact sequence (\ref{NSES}) shows that $\Pic^0(X)$  is the 
connected component   of $[{\cal O}_X]$ in  $\Pic(X)$. This connected complex Lie group fits in the following diagram with exact rows and exact columns
\begin{equation}\label{diagram}
\begin{array}{c}
\unitlength=1mm
\begin{picture}(90,45)(-44,-25)

\put(-22,15){\vector(0, -3){3}}
\put(1,15){\vector(0, -3){3}}
\put(-23,17){$0$}
\put(0,17){$0$}

\put(-40,8){0}
\put(-37,9){\vector(2,0){5}}
\put(-30,8){$\Pic^0_{\rm uf}(X)$}
\put(-13,9){\vector(2,0){5}}
\put(-7,8){$\Pic^0(X)$}
\put(8,11){$c_1^{\mathrm{\scriptscriptstyle BC}}$}
\put(9,9){\vector(2,0){5}}
\put(15,8){$H^{1,1}_{\rm BC}(X,\R)^0$}
\put(36,9){\vector(2,0){5}}
%\put(42,8){$H^{1,1}_{\rm DR}(X,\R)$}
%\put(62,9){\vector(2,0){5}}
%\put(68,8){0}
\put(43,8){0}
\put(-22,4){\vector(0, -3){3}}
\put(1,4){\vector(0, -3){3}}
\put(22,4){\line(0, -3){3}} \put(23,4){\line(0, -3){3}}
%\put(52,4){\line(0, -3){3}} \put(53,4){\line(0, -3){3}}
%
\put(-40,-4){0}
\put(-37,-3){\vector(2,0){5}}
\put(-30,-4){$\Pic_{\rm uf}(X)$}
\put(-13,-3){\vector(2,0){5}}
\put(-7,-4){$\Pic^{\rm T}(X)$}
\put(8,-1){$c_1^{\mathrm{\scriptscriptstyle BC}}$}
\put(9,-3){\vector(2,0){5}}
\put(15,-4){$H^{1,1}_{\rm BC}(X,\R)^0$}
\put(36,-3){\vector(2,0){5}}
%\put(42,-4){$H^{1,1}_{\rm DR}(X,\R)$}
%\put(62,-3){\vector(2,0){5}}
%\put(68,-4){0}
\put(43,-4){0}
%%%%%%
%%%%%%%%
\put(-22,-7){\vector(0, -3){4}}
\put(1,-7){\vector(0, -3){4}}

\put(-21,-9){$c_1$}
\put(2,-9){$c_1$}

%
%\put(-40,-10){0}
%\put(-37,-9){\vector(2,0){5}}
%\put(-13,-9){\vector(2,0){5}}
\put(-36,-16){${\rm Tors}H^2(X,\Z)$}
\put(-12,-16){$=$}
\put(-7,-16){${\rm Tors}H^2(X,\Z)$}

\put(-22,-18){\vector(0, -3){3}}
\put(1,-18){\vector(0, -3){3}}
 \put(-23,-25){$0$}

\put(0,-25){$0$}

\end{picture} 
\end{array} \ ,
\end{equation}
where  $H^{1,1}_{\rm BC}(X,\R)$ is the space of real Bott-Chern classes of type (1,1) (see \cite{BHPV}, p. 148), $H^{1,1}_{\rm BC}(X,\R)^0$ is defined by
$$H^{1,1}_{\rm BC}(X,\R)^0:=\ker\big(H^{1,1}_{\rm BC}(X,\R)\to H^{1,1}_{\rm DR}(X,\R)\big)\ ,$$
and  $\Pic_{\rm uf}(X)$ is the subgroup of holomorphic line bundles on $X$ which admit a compatible flat unitary connection. Equivalently, fixing a Hermitian metric $g$ on $X$, a point $l=[{\cal L}]\in\Pic(X)$ belongs to $\Pic_{\rm uf}(X)$ if and only if the Chern connection of the (essentially unique) Hermite-Einstein metric $h$ on ${\cal L}$ (see Corollary 2.1.6 \cite{LT}) is flat.
  The {\it maximal compact subgroup} of $\Pic^0(X)$ is the connected component $\Pic^0_{\rm uf}(X)$ of the unit in $\Pic_{\rm uf}(X)$. One has natural identifications
 $$\Pic_{\rm uf}(X)= \Hom(\pi_1(X,x_0),S^1),\ 
\Pic^0_{\rm uf}(X)=\{\rho\in\Hom(\pi_1(X,x_0),S^1)|\  c_1({\cal L}_\rho)=0\},
$$
where, in general, for $\rho\in\Hom(\pi_1(X,x_0),\C^*)$,  ${\cal L}_\rho$ denotes the flat  holomorphic line bundle associated with $\rho$.  

The exactness of the first horizontal sequence in (\ref{diagram}) follows from  
\begin{lm}\label{Bott-Chern-Curv}
Let $X$ be a compact complex manifold, and ${\cal L}$ a holomorphic line bundle. Then
\begin{enumerate}[(i)]
\item One has 
$$c_1^{\mathrm{BC}}({\cal L})=\left\{\frac{i}{2\pi} F_{{\cal L},h}\vert\  h\hbox{ Hermitian metric on }{\cal L}\right\}$$ 
(as subsets of $A^{1,1}_\R(X)$), where $F_{{\cal L},h}\in iA^{1,1}_\R(X)$ denotes the curvature form of the Chern connection associated with the pair $({\cal L},h)$. 
\item One has $[{\cal L}]\in \Pic_{\rm uf}(X)$ if and only if $c_1^{\mathrm{BC}}({\cal L})=0$.
\end{enumerate}
 \end{lm}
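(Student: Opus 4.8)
The plan is to work directly with the Bott--Chern cohomology group $H^{1,1}_{\mathrm{BC}}(X) = \{\,\alpha\in A^{1,1}(X)\mid d\alpha=0\,\}\big/ \partial\bar\partial A^0(X)$, using the standard fact that $c_1^{\mathrm{BC}}({\cal L})$ is represented by $\frac{i}{2\pi}F_{{\cal L},h}$ for any Hermitian metric $h$; this is essentially the definition of the Bott--Chern first Chern class, so there is nothing to prove there beyond recalling it. For (i), the content is the identification of the \emph{whole} Bott--Chern class with the \emph{set} of curvature forms of all Chern connections. One inclusion is immediate: every $\frac{i}{2\pi}F_{{\cal L},h}$ is a representative of $c_1^{\mathrm{BC}}({\cal L})$, hence lies in the class. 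For the reverse inclusion, I would take an arbitrary closed $(1,1)$-form $\omega$ in the class $c_1^{\mathrm{BC}}({\cal L})$; then $\omega - \frac{i}{2\pi}F_{{\cal L},h_0} = \partial\bar\partial\varphi$ for some $\varphi\in A^0(X)$, and a priori $\varphi\in A^0(X,\C)$. The key point is that one may take $\varphi$ real: since $\omega$ and $F_{{\cal L},h_0}$ are both real (the latter because $h_0$ is a Hermitian, i.e. positive real, metric and we use the convention $F\in iA^{1,1}_\R(X)$), the form $\partial\bar\partial\varphi$ is real, so $\partial\bar\partial(\varphi-\bar\varphi)=0$; replacing $\varphi$ by $\tfrac12(\varphi+\bar\varphi)$ changes nothing and makes $\varphi$ real-valued. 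Then $h := e^{-2\pi\varphi}h_0$ (or the appropriate sign convention) is again a Hermitian metric, and its Chern curvature is $F_{{\cal L},h} = F_{{\cal L},h_0} + 2\pi\,\partial\bar\partial\varphi$, whence $\frac{i}{2\pi}F_{{\cal L},h} = \omega$. This establishes (i).

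For (ii), the forward direction is immediate: if $[{\cal L}]\in\Pic_{\mathrm{uf}}(X)$ then ${\cal L}$ admits a Hermitian metric whose Chern connection is flat, so $F_{{\cal L},h}=0$, and by (i) the Bott--Chern class $c_1^{\mathrm{BC}}({\cal L})$ contains the zero form, i.e. $c_1^{\mathrm{BC}}({\cal L})=0$. For the converse, suppose $c_1^{\mathrm{BC}}({\cal L})=0$; by (i) there is a Hermitian metric $h$ with $F_{{\cal L},h}=0$, so the Chern connection of $({\cal L},h)$ is a flat unitary connection compatible with the holomorphic structure, which is exactly the condition defining $\Pic_{\mathrm{uf}}(X)$. (If one wishes to match the alternative characterization via the Hermite--Einstein metric quoted in the text, note that a Hermitian metric with flat Chern connection is automatically Hermite--Einstein with vanishing Einstein constant, and the Hermite--Einstein metric is essentially unique, so the two descriptions agree.)

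The only genuinely non-routine step is the reality argument in (i) — ensuring that the $\partial\bar\partial$-potential relating two real representatives can itself be chosen real, so that exponentiating it yields a bona fide Hermitian metric rather than merely a complex gauge transformation; everything else is bookkeeping with the definition of $c_1^{\mathrm{BC}}$ and the transformation law $F_{{\cal L},e^{-2\pi\varphi}h_0} = F_{{\cal L},h_0} + 2\pi\,\partial\bar\partial\varphi$ for Chern curvature under conformal rescaling of the metric. I expect no serious obstacle; the lemma is a structural fact about Bott--Chern cohomology of line bundles, and the proof is short once the reality of the potential is observed.
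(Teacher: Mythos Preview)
Your proposal is correct and follows essentially the same route as the paper: both directions of (i) come from the transformation law for Chern curvature under conformal rescaling of the metric, and (ii) is immediate from (i) together with the definition of $\Pic_{\rm uf}(X)$. The only difference is that you spell out the reality argument for the $\partial\bar\partial$-potential, whereas the paper works directly in real Bott--Chern cohomology and simply takes $\varphi\in{\cal C}^\infty(X,\R)$ from the start, writing $c_1({\cal L},e^\varphi h)=c_1({\cal L},h)+\frac{i}{2\pi}\bar\partial\partial\varphi$; your extra care is harmless but not needed once one adopts that convention.
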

 \begin{proof}
(i)  By definition $c_1^{\mathrm{BC}}({\cal L})$ is the Bott-Chern class of the Chern form %
 $$c_1({\cal L},h):=\frac{i}{2\pi} F_{{\cal L},h}$$
  of  any Hermitian metric $h$ on ${\cal L}$, hence the right hand set is contained in the left hand set. On the other  hand for any  smooth function $\varphi\in{\cal C}^\infty(X,\R)$ one has
 $$c_1({\cal L},e^\varphi h)=c_1({\cal L},h)+\frac{i}{2\pi} \bar\partial\partial\varphi\ ,
 $$
 which shows that, conversely, any representative of the Bott-Chern class $c_1^{\mathrm{BC}}({\cal L})$ is the Chern form of a suitable Hermitian metric on ${\cal L}$.
 \\ \\
 (ii) Follows from (i) and the definition of $\Pic_{\rm uf}(X)$.
 \end{proof}

\begin{re}\label{Lie}  The Lie algebra of $\Pic^0_{\rm uf}(X)$ is $\mathrm{Lie}(\Pic^0_{\rm uf}(X))=iH^1(X,\R)$ identified with a subspace of $H^1({\cal O}_X)=\mathrm{Lie}(\Pic^0(X))$ via the map $\hg$ defined above.  \\ 
\end{re}
We recall that a Hermitian metric $g$ on a complex $n$-manifold $X$ is called {\it Gauduchon} if $dd^c(\omega_g^{n-1})=0$. The  degree  map
$$\deg_g:\Pic(X)\to\R$$
associated with such a metric  is defined by  
$$\deg_g({\cal L}):=\int_X c_1({\cal L},h)\wedge \omega_g^{n-1}\ , $$
where  $h$  is a Hermitian metric on ${\cal L}$. Since the Bott-Chern class of the form $c_1({\cal L},h)$ is independent of $h$ and $dd^c(\omega_g^{n-1})=0$, the integral on the right is independent of $h$, hence $\deg_g$ is well defined.  We recall (see Proposition 1.3.13 \cite{LT}) that

\begin{re} If $X$ is a complex surface with $b_1(X)$ odd,  then $\deg_g$ is surjective on $\Pic^0(X)$,  and $\Pic^0_{\rm uf}(X)=\ker(\resto{\deg_g}{\Pic^0(X)}:\Pic^0(X)\to\R)$. If $X$ is a complex surface with $b_1(X)$ even, then $\Pic^0_{\rm uf}(X)=\Pic^0(X)$ is a compact torus and $\deg_g$ vanishes on this torus (even if $g$ is non-Kähler).
\end{re}
Therefore, 
\begin{re} For complex surface $X$ with $b_1(X)$ odd, one has $H^{1,1}_{\rm BC}(X,\R)^0\simeq\R$, and the maximal compact subgroup $\Pic^0_{\rm uf}(X)$ of $\Pic^0(X)$ is just the real hypersurface cut out by the Gauduchon degree.
\end{re}
\vspace{-1mm}{\ }\\
{\bf Example:} (see \cite{Te2}) Let $X$ be a class VII surface, i.e. a complex surface $X$ with $b_1(X)=1$ and $\kod(X)=-\infty$. Then 
\begin{enumerate}
\item $\NS(X)=H^2(X,\Z)$,
\item The composition of the  isomorphism 
$$\C^*\to \Hom\left(\qmod{H_1(X,\Z)}{\Tors},\C^*\right)$$
defined by a generator $u$ of $H_1(X,\Z)/{\Tors}\simeq\Z$ with the obvious morphism
$$\ \ \ \ \ \ \ \ \ \ \  \    \Hom\left(\qmod{H_1(X,\Z)}{\Tors},\C^*\right)\to \Hom(H_1(X,\Z),\C^*)=H^1(X,\C^*)\to H^1({\cal O}^*_X)
$$ 
defines an isomorphism $l_u:\C^*\textmap{\simeq} \Pic^0(X)$. Choosing $u$ in a convenient way we have for any Gauduchon metric $g$ on $X$
$$\deg_g(l(\zeta))=C_g \log|\zeta| \ \forall \zeta\in\C^*\ ,
$$
where $C_g$ is a positive constant depending smoothly on $g$.
\end{enumerate}
\vspace{5mm}

\subsection{The variation formula in the unitary flat directions} 

We begin with the following simple

\begin{re} \label{uf-pert} Let $X$, $B$ be compact complex manifolds  and ${\cal E}$ a holomorphic vector bundle on $B\times X$. Then one has
$$  \lambda_{{\cal E} \otimes q^*( \Pic_{\rm uf}(X))}\subset   \lambda_{\cal E} \otimes \Pic_{\rm uf}(B)\ ,\ \lambda_{{\cal E} \otimes q^*( \Pic^0_{\rm uf}(X))}\subset   \lambda_{\cal E} \otimes \Pic^0_{\rm uf}(B)\ ,$$
  hence  perturbing the kernel ${\cal E}$   by a (topologically trivial) unitary flat line bundle on the fiber $X$ will change the determinant line bundle by a (topologically trivial)  unitary flat line bundle on  $B$. 
\end{re} 
 
\begin{proof} 
Fix a Hermitian metric $h$ on ${\cal E}$ and let ${\cal L}$ be a holomorphic line bundle on $X$ whose isomorphy class belongs to $\Pic_{\rm uf}(X)$. In other words ${\cal L}$ admits a Hermitian metric $h_0$ such that the associated Chern connection is flat. The Chern forms of ${\cal E}\otimes q^*({\cal L})$ computed using the tensor product metric  $h\otimes q^*(h_0)$ coincide with the Chern forms of ${\cal E}$. Using Theorem 11.8.1 of \cite{Bi} we see that  
$$c_1^{\mathrm{BC}}(\lambda_{{\cal E} \otimes q^*({\cal L})})=c_1^{\mathrm{BC}}(\lambda_{{\cal E} })\ ,
$$ 
therefore, by  Lemma \ref{Bott-Chern-Curv}, the holomorphic line bundles ${\cal D}:=\lambda_{\cal E}$,  ${\cal D}':=\lambda_{{\cal E} \otimes q^*({\cal L})}$ on $B$ admit Hermitian metrics $\chi$, $\chi'$ such that 
$$F_{{\cal D},\chi}= F_{{\cal D}',\chi'}\ .
$$
This shows that ${\cal D}'\otimes {\cal D}^\vee$ admits a Hermitian metric whose  Chern connection is flat, hence $[{\cal D}'\otimes {\cal D}^\vee]\in \Pic_{\rm uf}(B)$ as claimed.
The similar statement for $\Pic^0_{\rm uf}(X)$ is proved taking into account that the map $[\delta_{\cal E}]:\Pic(X)\to\Pic(B)$ is holomorphic (as we mentioned in section \ref{intro}), so continuous.  
\end{proof}
Taking into account Remark \ref{uf-pert} is natural to ask:
\begin{itemize}
\item Compute the linearization  $l_{\cal E}:iH^1(X,\R)\to iH^1(B,\R)$ of  the map  
$$\Pic^0_{\rm uf}(X) \ni [{\cal L}]\ \stackrel{[\delta_{\cal E}]}{\longmapsto} \ \left[\lambda_{{\cal E} \otimes q^*({\cal L})}\right]\in  \lambda_{\cal E} \otimes \Pic^0_{\rm uf}(B)\ .$$
\item  How does  $l_{\cal E}$  depend on the kernel ${\cal E}$? Is it a topological invariant? 
\item  Compare  $l_{\cal E}$ with $l_{{\cal E}\otimes q^*({\cal T})}$ for a line  bundle ${\cal T}\in\Pic(X)$.  
\end{itemize}
The linear map $l_{\cal E}:iH^1(X,\R)\to iH^1(B,\R)$ should be called   the linearization of $[\delta_{\cal E}]$  in the unitary flat directions.

\begin{thry}   \label{T1}  Let $X$, $B$ be compact complex manifolds  and ${\cal E}$ a holomorphic vector bundle on $B\times X$. For any $u\in iH^1(X,\R)$ one has
\begin{equation}\label{lE}
l_{\cal E}(u)=p_*\big(q^*(u)\cup \mathrm{ch}({\cal E})\cup\mathrm{td}(X)\big)^{(1)}\ ,
\end{equation}
where the exponent $^{(1)}$ on the right means  ``the term of degree 1".
\end{thry}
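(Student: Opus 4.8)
\textbf{Proof strategy for Theorem \ref{T1}.}

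The plan is to reduce the computation of the linearization $l_{\cal E}$ to the variation formula for the Bott-Chern first Chern class of the determinant line bundle, which is governed by Bismut's non-Kählerian Grothendieck-Riemann-Roch theorem (Theorem 11.8.1 of \cite{Bi}), and then to identify the resulting cohomological expression with a fibre-integral. First I would fix a base point $\rho_0$ in $\Pic^0_{\rm uf}(X)$, say the trivial character, and parametrize a one-parameter family of unitary flat line bundles ${\cal L}_t$ on $X$ whose derivative at $t=0$ is the prescribed class $u\in iH^1(X,\R)\subset H^1({\cal O}_X)$, using the identification from Remark \ref{Lie}. By Remark \ref{uf-pert} the family $t\mapsto[\lambda_{{\cal E}\otimes q^*({\cal L}_t)}]$ stays inside the coset $\lambda_{\cal E}\otimes\Pic^0_{\rm uf}(B)$, so its derivative at $t=0$ is a well-defined element $l_{\cal E}(u)\in iH^1(B,\R)=\mathrm{Lie}(\Pic^0_{\rm uf}(B))$.

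The key step is to compute this derivative via Chern-Weil / Bott-Chern representatives. Choose a Hermitian metric $h$ on ${\cal E}$ and, for each $t$, the flat unitary metric $h_t$ on ${\cal L}_t$; then $c_1^{\mathrm{BC}}(\lambda_{{\cal E}\otimes q^*({\cal L}_t)})$ is represented, by Bismut's formula, by the fibre integral $p_*\big(\mathrm{ch}({\cal E}\otimes q^*({\cal L}_t),h\otimes q^*h_t)\,\mathrm{td}(X,g)\big)$ in degree $2$. The point is that although each $h_t$ is \emph{flat}, so that the Chern form of ${\cal L}_t$ vanishes identically, the \emph{derivative in $t$} of the relevant Bott-Chern potential is exactly $q^*(u)$: differentiating the identity $c_1({\cal L}_t,h_t)=0$ along the family picks out the harmonic (equivalently, $\bar\partial$-closed) $(0,1)$-part $u$ as the infinitesimal variation of the Chern connection, and this is precisely the content of the injectivity of $\hg$ and of the description of $\mathrm{Lie}(\Pic^0_{\rm uf}(X))$ recalled before the statement. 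Hence differentiating Bismut's formula under the integral sign and using $\frac{d}{dt}\big|_0\mathrm{ch}({\cal E}\otimes q^*{\cal L}_t)=q^*(u)\cup\mathrm{ch}({\cal E})$ (the standard multiplicativity of the Chern character, applied at the level of forms, with all higher-order terms in $u$ dropping out since we only want the first derivative), one obtains
\begin{equation}\label{plan-main}
l_{\cal E}(u)=\Big[p_*\big(q^*(u)\,\mathrm{ch}({\cal E})\,\mathrm{td}(X)\big)\Big]^{(1)}\ ,
\end{equation}
which is the claimed formula once one checks that the degree-$2$ Bott-Chern class produced by Bismut's theorem, after fibre integration, maps to the asserted degree-$1$ class in $H^1(B,\R)$ under the vertical arrows $c_1^{\mathrm{BC}}$ of diagram (\ref{diagram}). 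Here I would be careful that the fibre integration $p_*$ lowers cohomological degree by $2\dim_{\C}X$ and that "the term of degree $1$" on the right of (\ref{lE}) refers to the component in $H^1(B)$ after this integration; tracking degrees, $q^*(u)$ contributes degree $1$ on $X$, so the surviving piece of $\mathrm{ch}({\cal E})\,\mathrm{td}(X)$ must have degree $2\dim_{\C}X$ along the fibre, and the total pushes forward to degree $1$ on $B$, matching $iH^1(B,\R)$.

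The main obstacle I anticipate is the legitimacy of differentiating Bismut's formula in the family parameter: Theorem 11.8.1 of \cite{Bi} is an identity in Bott-Chern cohomology for a \emph{fixed} kernel, and one must argue that, as $t$ varies, the Bott-Chern representatives can be chosen smoothly in $t$ (this is where the holomorphy/continuity of $[\delta_{\cal E}]$, already invoked in the proof of Remark \ref{uf-pert}, together with a smooth choice of Hermitian metrics and of the analytic torsion / Quillen-type data entering Bismut's construction, must be used) so that term-by-term differentiation is valid. A clean way to sidestep the analytic subtleties is to observe that both sides of (\ref{plan-main}) are $\R$-linear in $u$ and that it suffices to verify the identity after composing with any linear functional on $iH^1(B,\R)$, or equivalently to test it against $1$-cycles in $B$; pairing with a cycle reduces the fibre integral over $X$ to an ordinary Grothendieck-Riemann-Roch computation on $\{b\}\times X$ plus a variation in the $B$-direction, where the relevant deformation of $\mathrm{ch}$ is purely the elementary identity $\mathrm{ch}({\cal E}\otimes q^*{\cal L}_t)=\mathrm{ch}({\cal E})\,q^*\mathrm{ch}({\cal L}_t)$ differentiated at $t=0$. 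The last thing to check is the compatibility of signs and normalization factors ($2\pi i$'s) between the de Rham–Dolbeault description of $\hg$ in (\ref{hg}), the normalization $c_1({\cal L},h)=\frac{i}{2\pi}F_{{\cal L},h}$ in Lemma \ref{Bott-Chern-Curv}, and Bismut's conventions for $\mathrm{ch}$ and $\mathrm{td}$; these are routine but must be pinned down so that the factor in front of the fibre integral is exactly $1$, as stated.
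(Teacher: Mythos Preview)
Your approach has a genuine gap at its very center. You propose to apply Bismut's formula for each fixed $t$ separately and then differentiate the resulting Bott--Chern representative in $t$. But as you yourself observe, the metric $h_t$ on ${\cal L}_t$ is flat, so at the level of Chern--Weil forms one has $\mathrm{ch}({\cal E}\otimes q^*{\cal L}_t,\,h\otimes q^*h_t)=\mathrm{ch}({\cal E},h)$ identically in $t$. Consequently your key identity $\frac{d}{dt}\big|_0\mathrm{ch}({\cal E}\otimes q^*{\cal L}_t)=q^*(u)\cup\mathrm{ch}({\cal E})$ is false: the left-hand side is zero. This is exactly the content of Remark \ref{uf-pert}: the Bott--Chern class $c_1^{\mathrm{BC}}(\lambda_{{\cal E}\otimes q^*({\cal L}_t)})$ is \emph{constant} along the family, so differentiating it cannot possibly recover $l_{\cal E}(u)$. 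The quantity $l_{\cal E}(u)$ lives in the kernel of $c_1^{\mathrm{BC}}$ (see diagram (\ref{diagram})), and is therefore invisible to any argument that only tracks Bott--Chern classes of the individual determinant bundles. Your attempted patch---the ``Bott--Chern potential'' whose derivative should be $q^*(u)$---is not a well-defined object here; the variation $u$ is the variation of the \emph{connection}, not of any curvature or secondary class produced by Bismut's formula for fixed $t$, and there is no mechanism in your outline by which a connection-level datum enters the computation.

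The paper's proof circumvents this by working one level up: instead of a family of applications of Bismut's formula, it applies Bismut's formula \emph{once}, over the larger base $\tilde B=\Pic^0(X)\times B$, to the kernel $\tilde{\cal E}=p_{B\times X}^*({\cal E})\otimes p_{\Pic^0(X)\times X}^*({\cal P})$ where ${\cal P}$ is a Poincar\'e line bundle equipped with a fibrewise Hermite--Einstein metric $H$. The point is that the curvature $F_{{\cal P},H}$ has a nontrivial \emph{mixed} component in the $\Pic^0(X)$--$X$ directions, and contracting it with $v\in T_0\Pic^0_{\mathrm{uf}}(X)$ yields a closed $1$-form on $X$ representing $u$. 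Bismut's formula on $\tilde B$ then produces the curvature of the determinant bundle $\tilde{\cal D}$ on $\Pic^0(X)\times B$, and Proposition \ref{diff-curv} (which says that the differential of a classifying map is given by contraction of the curvature of the universal object) identifies $\iota_v F_{\tilde A}$ with $l_{\cal E}(u)$. The class $u$ thus enters not by differentiating a Chern form in $t$, but as the mixed curvature term of a universal line bundle on a product. Your proposal could be repaired by rewriting it in this spirit (using $\R\times B$ as base and the connection in temporal gauge), but as stated it does not go through.
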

Therefore the linearization $l_{\cal E}$ is determined by the Chern classes of ${\cal E}$, so has a topological character.

\begin{proof}  

Let ${\cal P}$ be a Poincaré line bundle on $\Pic^0(X)\times X$. This means that   ${\cal P}$   has the following property: for every $l\in\Pic^0(l)$ the holomorphic line bundle ${\cal P}_l$ on $X$ defined by  restriction  of ${\cal P}$ to $\{l\}\times X$ belongs to the isomorphy class $l$.  Fix a Hermitian metric $g$ on $X$. Solving the Hermite-Einstein equation (see Corollary  2.1.6 \cite{LT}) fiberwise,  we obtain a Hermitian metric $H$ on ${\cal P}$ -- unique up to multiplication with a function $\rho\in {\cal C}^\infty(\Pic^0(X),\R_{>0})$ --  such that for every point $l\in\Pic^0(X)$ the Chern connection $A_l:=A_{{\cal P}_l,H_l}$ of the restriction $H_l$ of $H$ to   ${\cal P}_l$   is Hermite-Einstein.  For $l\in\Pic^0_{\rm uf}(X)$ the Chern connection $A_{l}$ is flat.  Let $u\in i H^1(X,\R)$ and let $v\in T_0^\R(\Pic^0_{\rm uf}(X))$ be the corresponding real tangent vector. Using Remark \ref{closed2} in the Appendix we see that the imaginary 1-form $\iota_v F_{{\cal P},H}$ on $X$ is closed and is a representative of the Rham class $u$. In other words
\begin{equation}\label{iotav}
[\iota_v F_{{\cal P},H}]_{\rm DR}=u\ .
\end{equation}

Consider the projection 
$$\tilde p: \Pic^0(X)\times B\times X\to \Pic^0(X)\times B=:\tilde B\ ,$$
let $\tilde{\cal D}$ be the determinant line bundle  (with respect to $\tilde p$) of  
$$\tilde{\cal E}:=p_{B\times X}^*({\cal E} )\otimes p_{\Pic^0(X)\times X}^*({\cal P})\ .$$  
and let ${\cal D}_l$ be the line bundle on $B$ defined by the restriction of ${\cal D}$ to $\{l\}\times B$. Since the determinant line bundle commutes with base change (Proposition 2.4 \cite{Gri}) we obtain  an isomorphism 
\begin{equation}\label{taut}
\lambda_{{\cal E}\otimes q^*({\cal L})}\simeq {\cal D}_{[{\cal L}]}\ ,
\end{equation}
for every topologically trivial holomorphic line bundle ${\cal L}$ on $X$. Therefore the variation $l_{\cal E}(u)$ is the derivative of the map $l\mapsto [{\cal D}_l]$ in the direction $v$ defined by $u$.  We will compute this derivative using Proposition \ref{diff-curv}  proved in the Appendix.
 
Fix a Hermitian metric  $h$ on ${\cal E}$.    Combining Bismut's  formula for the Chern class of the determinant line bundle   in Bott-Chern cohomology (Theorem 11.8.1   \cite{Bi}) with Lemma \ref{Bott-Chern-Curv} we obtain a Hermitian metric $\tilde\chi$ on $\tilde{\cal D}$ such that the curvature of the corresponding Chern connection  $\tilde A$ is
$$F_{\tilde A}=-2\pi i \tilde p_*\big(\mathrm{ch}({\cal P},H)\cup\mathrm{ch}({\cal E},h)\cup \mathrm{td}(X,g)\big)^{(2)}\in iA^{1,1}(\Pic^0(X)\times B)\ .
$$

Put $n:=\dim_\C(X)$. For the computation of the 1-form $\iota_v F_{\tilde A}$ on $B$ we need only the restriction to $\Pic^0_{\rm uf}(X)\times B\times X$ of the terms of degree $(1,1,2n)$ in the exterior polynomial $\mathrm{ch}({\cal P},H)\cup\mathrm{ch}({\cal E},h)\cup \mathrm{td}(X,g)$ on the product $\Pic^0(X)\times B\times X$. Since the restriction  of $F_{{\cal P},H}$ to $\Pic^0_{\rm uf}(X)\times X$ vanishes in the $X$-directions, the only term  of $\mathrm{ch}({\cal P},H)=e^{c_1({\cal P},H)}$ which contributes to $\iota_v F_{\tilde A}$ is the mixed term  of   
$$\resto{c_1({\cal P},H)}{\Pic^0_{\rm uf}(X)\times X}=\frac{i}{2\pi}\resto{F_{{\cal P},H}}{\Pic^0_{\rm uf}(X)\times X}\ .$$
Putting $\alpha:=\iota_v F_{{\cal P},H}\in iZ^1(X,\R)$ we obtain
$$\iota_v F_{\tilde A}= p_*\big(q^*(\alpha)\cup\mathrm{ch}({\cal E},h)\cup \mathrm{td}(X,g)\big)^{(1)}\in iZ^{1}(B)\ .
$$

 The claim follows now by Proposition \ref{diff-curv} taking into account (\ref{iotav}). 
\end{proof} 

\begin{re} \label{ConstantSpeed} In the conditions  of Theorem  \ref{T1} the following holds  
\begin{enumerate}[(i)]
\item  The map $l_{\cal E}$ has a topological character, in particular 
\begin{equation}\label{forPic0}
 l_{\cal E}=l_{{\cal E}\otimes q^*{\cal L}}\ \forall [{\cal L}]\in\Pic^0(X)\ .
 \end{equation}
\item If  $l_{\cal E}=0$ then $[\delta_{{\cal E}}]$ is constant on the compact real torus $\Pic^0_{\rm uf}(X)\subset\Pic^0(X)$. 
\item Suppose that $X$ is a  complex surface. In this case  $\Pic^0_{\rm uf}(X)$ either coincides with $\Pic^0(X)$ (when $b_1(X)$ is even) or is a real hypersurface of $\Pic^0(X)$ (when $b_1(X)$ is odd). Therefore, since $[\delta_{\cal E}]$ is holomorphic, we obtain in this case
\begin{equation}\label{le=0}
l_{\cal E}=0\Longleftrightarrow \resto{[\delta_{\cal E}]}{\Pic^0(X)}\hbox{ is constant}\ .
\end{equation}

\end{enumerate}
\end{re}

 There exists  another  natural approach  to prove the variation formula for the determinant line bundle: suppose we have  a general Grothendieck-Riemann-Roch formula for proper  holomorphic maps in {\it Deligne cohomology}. For a compact complex manifold $X$ the first Chern class map in Deligne cohomology  is just the map
$$[\det]:\mathrm{Coh} (X) \to\Pic (X)$$
 mapping a coherent sheaf to the isomorphism class of its determinant line bundle, hence such a Grothendieck-Riemann-Roch formula will compute $\lambda_{\cal E}$ up to isomorphism, not just an invariant of this holomorphic line bundle.    It suffices to differentiate the obtained formula for $\lambda_{{\cal E}\otimes q^*({\cal L})}$ with respect to ${\cal L}$. Unfortunately is seems that proving a Grothendieck-Riemann-Roch formula in Deligne cohomology is very hard, and for the moment we don't have the necessary tools to deal with this problem. Recent progress has been recently  obtained  by Julien Grivaux, who has recently obtained a variation formula in Deligne cohomology   although a  Grothendieck-Riemann-Roch formula in this  cohomology is not available yet.  His result can be regarded as a strong indication that such a general Grothendieck-Riemann-Roch formula in Deligne cohomology should hold.
 
For a cohomology class $u\in i H^1(X,\R)$ denote by $D_X(u)\in i H_{2n-1}(X,\R)$ its Poincaré dual homology class.

\begin{co} \label{Co1} Let $X$ be a complex surface and  ${\cal E}$ is a holomorphic rank $r$ bundle on $B\times X$ with $c_1({\cal E})\in p^*(H^2(B,\Z))+q^*(H^2(X,\Z))$
(so the  mixed  term in the Künneth decomposition  of  $c_1({\cal E})$ vanishes).   Then one has
$$l_{\cal E}(u)=-\frac{1}{2r} p_*(q^*(u)\cup c_2(\End_0({\cal E}))=-\frac{1}{2r}  c_2(\End_0({\cal E}))/D_X(u)
$$
In particular $l_{\cal E}=l_{{\cal E}\otimes q^*({\cal T})}$ for any holomorphic line bundle ${\cal T}$ on $X$.
\end{co}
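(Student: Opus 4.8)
The plan is to derive the Corollary directly from the master formula (\ref{lE}) of Theorem~\ref{T1} by a Künneth bookkeeping argument, combined with the elementary Chern-class identity for the trace-free endomorphism bundle. Throughout, for a class on $B\times X$ I write ``bidegree $(a,b)$'' for its component in $H^a(B)\otimes H^b(X)$, and I use that $p_*$ lowers total degree by $2n=4$ and annihilates every Künneth summand except those of bidegree $(\,\cdot\,,4)$. So, since $q^*(u)$ has bidegree $(0,1)$, to get the degree-$1$ part of $p_*$ I only need the bidegree-$(1,4)$ component of $q^*(u)\cup\mathrm{ch}({\cal E})\cup\mathrm{td}(X)$.

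First I would expand the integrand. Since $n=\dim_\C X=2$, the class $\mathrm{td}(X)$ is pulled back from $X$ and supported in bidegrees $(0,0),(0,2),(0,4)$ — explicitly $q^*\big(1+\tfrac12 c_1+\tfrac1{12}(c_1^2+c_2)\big)$ with $c_i=c_i(T_X)$ — while $\mathrm{ch}({\cal E})=r+c_1({\cal E})+\mathrm{ch}_2({\cal E})+\cdots$ with $\mathrm{ch}_2({\cal E})=\tfrac12 c_1({\cal E})^2-c_2({\cal E})$. Matching bidegrees, the only candidate contributions to the bidegree-$(1,4)$ part are $q^*(u)\cup[\mathrm{ch}_2({\cal E})]_{(1,3)}$ and $\tfrac12\,q^*(u)\cup q^*(c_1)\cup[c_1({\cal E})]_{(1,1)}$; the remaining piece $r\cdot\tfrac1{12}\,q^*(u)\cup q^*(c_1^2+c_2)$ dies because its $X$-degree is $5>4$. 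Now the hypothesis $c_1({\cal E})\in p^*H^2(B,\Z)+q^*H^2(X,\Z)$ forces $[c_1({\cal E})]_{(1,1)}=0$, killing the second contribution, and also forces $c_1({\cal E})^2$ to have bidegrees only in $\{(4,0),(2,2),(0,4)\}$, so $[\mathrm{ch}_2({\cal E})]_{(1,3)}=-[c_2({\cal E})]_{(1,3)}$. Hence
\begin{equation}\label{step1}
l_{\cal E}(u)=-\,p_*\big(q^*(u)\cup c_2({\cal E})\big).
\end{equation}

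Next I would pass to $\End_0({\cal E})$. Computing $\mathrm{ch}_2(\End({\cal E}))$ from the Chern roots $x_i-x_j$ of $\End({\cal E})={\cal E}\otimes{\cal E}^\vee$ gives $\mathrm{ch}_2(\End({\cal E}))=(r-1)c_1({\cal E})^2-2r\,c_2({\cal E})$; since $\End({\cal E})=\End_0({\cal E})\oplus{\cal O}$ and $c_1(\End_0({\cal E}))=0$, this reads $-c_2(\End_0({\cal E}))=(r-1)c_1({\cal E})^2-2r\,c_2({\cal E})$, i.e. $c_2(\End_0({\cal E}))=2r\,c_2({\cal E})-(r-1)c_1({\cal E})^2$. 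Substituting into $-\tfrac1{2r}p_*\big(q^*(u)\cup c_2(\End_0({\cal E}))\big)$ reproduces (\ref{step1}) up to the term $\tfrac{r-1}{2r}p_*\big(q^*(u)\cup c_1({\cal E})^2\big)$, which vanishes since, under the hypothesis, $c_1({\cal E})^2$ has no bidegree-$(1,3)$ component and hence $q^*(u)\cup c_1({\cal E})^2$ has no bidegree-$(\,\cdot\,,4)$ component. This gives the first displayed identity of the Corollary, and the second is the definition of the slant product against the Poincaré dual, $p_*(q^*(u)\cup\xi)=\xi/D_X(u)$ for $\xi\in H^\bullet(B\times X)$. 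Finally, $\End_0({\cal E}\otimes q^*{\cal T})\cong\End_0({\cal E})$ for any ${\cal T}\in\Pic(X)$ (tensoring by a line bundle does not change $\End$), and $c_1({\cal E}\otimes q^*{\cal T})=c_1({\cal E})+r\,q^*c_1({\cal T})$ still satisfies the hypothesis; since the right-hand side of the formula depends only on $\End_0({\cal E})$, we conclude $l_{\cal E}=l_{{\cal E}\otimes q^*({\cal T})}$.

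No step here is deep; the point needing care is the cancellation leaving only the $c_2$-term, which rests on \emph{two} independent inputs at once — the surface dimension $n=2$, which truncates $\mathrm{td}(X)$ and kills the degree-$(\,\cdot\,,5)$ terms, and the Künneth hypothesis on $c_1({\cal E})$, which removes the genuinely ``mixed'' contribution of $[c_1({\cal E})]_{(1,1)}$ together with the $c_1({\cal E})^2$ term — and one must keep the factors of $2$ and $r$ straight in the $\End_0$ identity.
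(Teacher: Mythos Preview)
Your proof is correct and follows essentially the same route as the paper's: both reduce formula~(\ref{lE}) via K\"unneth bookkeeping to $l_{\cal E}(u)=-p_*(q^*(u)\cup c_2({\cal E}))$ using the hypothesis on $c_1({\cal E})$, then invoke the identity $c_2(\End_0({\cal E}))=2r\,c_2({\cal E})-(r-1)c_1({\cal E})^2$ and the vanishing of the $(1,3)$-component of $c_1({\cal E})^2$. Your write-up is slightly more explicit (expanding $\mathrm{td}(X)$, deriving the $\End_0$ identity from Chern roots, and spelling out why the ``In particular'' clause holds via $\End_0({\cal E}\otimes q^*{\cal T})\cong\End_0({\cal E})$), but the argument is the same.
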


\begin{proof} Since $\dim_\R(X)=4$  formula (\ref{lE}) shows that the only terms of $\mathrm{ch}({\cal E})$ which contribute to $l_{\cal E}(u)$ are those of bidegree $(1,1)$ and $(1,3)$ with respect to the Künneth decomposition of the cohomology of $B\times X$.  The hypothesis implies that in  the sum
$$\mathrm{ch}({\cal E})=r+c_1({\cal E})+\frac{1}{2}(c_1^2({\cal E})-2c_2({\cal E}))+\dots 
$$
neither $c_1({\cal E})$, nor $c_1^2({\cal E})$ contains terms of bidegree $(1,1)$ or $(1,3)$. Therefore the only term of $\mathrm{ch}({\cal E})$ which contributes to $l_{\cal E}(u)$ is the term of bidegree (1,3) of $c_2({\cal E})$. This shows that
 \begin{equation}\label{lEnew}
l_{\cal E}(u)=-p_*\big(q^*(u)\cup c_2({\cal E})\big)\ .
\end{equation}
It suffices to recall that
$$
c_2({\cal E}nd_0({\cal E}))=2rc_2({\cal E})-(r-1)c_1^2({\cal E})
$$
and to take into account again that, under our assumptions, $c_1^2({\cal E})$ does not contain terms of bidegree $(1,1)$ or $(1,3)$. For the second equality use the identity 5 p. 288 \cite{Sp} taking for $z$ the fundamental class of $X$.  
\end{proof}

\begin{re}
1. The same formula gives the Donaldson $\mu$-class associated with the homology class $D_X(u)\in H_3(X,\R)$ on a moduli space of irreducible $\mathrm{PU}(r)$ connections.  This gives an interesting geometric interpretation of this Donaldson class in Donaldson theory on complex surfaces.

2. The condition on the class $c_1({\cal E})$ assumed in the hypothesis of Corollary \ref{Co1} is satisfied in the gauge theoretical framework. Indeed, in Donaldson theory one uses moduli spaces of anti-selfdual $\mathrm{PU}(r)$-connections or, equivalently, moduli spaces of projectively anti-selfdual unitary connections with fixed determinant \cite{D}, \cite{Te2}, \cite{Te4}. The universal bundle associated with (a subspace of) such a moduli space, if exists,  satisfies this condition (see \cite{Te2}, \cite{Te4}  and Section \ref{CompSubsp} in this article). \end{re}

We conclude the section with a simple corollary.
\begin{co}\label{main}  Let $X$ be a complex surface and  ${\cal E}$ is a holomorphic rank $r$ bundle on $B\times X$ with $c_1({\cal E})\in p^*(H^2(B,\Z))+q^*(H^2(X,\Z))$
(so the  mixed  term  in the Künneth decomposition  of  $c_1({\cal E})$ vanishes). Suppose that there exists a line bundle ${\cal L}_0$ on $X$ such that 
$$h^i({\cal E}_b\otimes {\cal L}_0)=0\ \  \forall b\in B\ \forall i\in\{0,1,2\}\ ,\  $$
where ${\cal E}_b$ is the bundle on $X$ defined by the restriction  $\resto{{\cal E}}{\{b\}\times X}$. Then the map $[\delta_{\cal E}]:\Pic(X)\to\Pic(B)$ given by
$$[\delta_{\cal E}]([{\cal L}]:=[\lambda_{{\cal E}\otimes q^*({\cal L})}]$$
is constant  on every component  $\Pic^c(X)$ of $\Pic(X)$.
\end{co}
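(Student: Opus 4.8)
The plan is to reduce the statement to the variation formula of Theorem~\ref{T1}, more precisely to its consequence Remark~\ref{ConstantSpeed}~(iii), the cohomological hypothesis serving only to exhibit one point near which $[\delta_{\cal E}]$ is visibly locally constant.

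First I would make the harmless normalisation ${\cal L}_0={\cal O}_X$. Tensoring the kernel by $q^*({\cal L}_0)$ replaces $[\delta_{\cal E}]$ by $[\delta_{{\cal E}\otimes q^*({\cal L}_0)}]$ precomposed with the translation of $\Pic(X)$ by $[{\cal L}_0]$; this translation is a biholomorphism of $\Pic(X)$ permuting the connected components, so $[\delta_{\cal E}]$ is constant on each component if and only if $[\delta_{{\cal E}\otimes q^*({\cal L}_0)}]$ is. Since $c_1({\cal E}\otimes q^*({\cal L}_0))=c_1({\cal E})+r\,q^*(c_1({\cal L}_0))$ still has vanishing mixed Künneth term and $({\cal E}\otimes q^*({\cal L}_0))_b={\cal E}_b\otimes{\cal L}_0$, the new kernel satisfies the hypotheses with ${\cal L}_0$ replaced by ${\cal O}_X$. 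So from now on $h^i({\cal E}_b)=0$ for all $b\in B$ and all $i$.

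Next I would show that $[\delta_{\cal E}]$ is constant equal to $[{\cal O}_B]$ on a neighbourhood of $[{\cal O}_X]$ in $\Pic(X)$. Using a Poincaré line bundle on $U\times X$ for a small $U\subset\Pic(X)$ and the compactness of $B$, the set $V$ of $[{\cal L}]$ with $h^i({\cal E}_b\otimes{\cal L})=0$ for all $b$ and all $i$ is open and contains $[{\cal O}_X]$; for $[{\cal L}]\in V$ the theorem on cohomology and base change (applied to the $B$-flat sheaf ${\cal E}\otimes q^*({\cal L})$ whose fibrewise cohomology vanishes in all degrees) gives $R^ip_*({\cal E}\otimes q^*({\cal L}))=0$ for every $i$, hence $\lambda_{{\cal E}\otimes q^*({\cal L})}\simeq{\cal O}_B$. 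So $[\delta_{\cal E}]\equiv[{\cal O}_B]$ on $V$. A fortiori its restriction to $\Pic^0_{\rm uf}(X)$ is constant near the origin, so $l_{\cal E}=0$; and then Remark~\ref{ConstantSpeed}~(iii) gives that $[\delta_{\cal E}]$ is already constant on the whole identity component $\Pic^0(X)$.

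Finally, to handle an arbitrary component $\Pic^c(X)$ I would fix $[{\cal L}_1]\in\Pic^c(X)$ and set ${\cal E}':={\cal E}\otimes q^*({\cal L}_1)$: it still has vanishing mixed Künneth term in $c_1$, and because $\End_0({\cal E}')\cong\End_0({\cal E})$, Corollary~\ref{Co1} yields $l_{{\cal E}'}=l_{\cal E}=0$; applying Remark~\ref{ConstantSpeed}~(iii) to ${\cal E}'$ we get that $[\delta_{{\cal E}'}]$ is constant on $\Pic^0(X)$, and since $[\delta_{{\cal E}'}]([{\cal L}])=[\delta_{\cal E}]([{\cal L}_1\otimes{\cal L}])$ with $[{\cal L}]\mapsto[{\cal L}_1\otimes{\cal L}]$ mapping $\Pic^0(X)$ onto $\Pic^c(X)$, the desired constancy of $[\delta_{\cal E}]$ on $\Pic^c(X)$ follows. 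The proof has no serious analytic obstacle; the only point needing care is that one cannot conclude on components other than $\Pic^0(X)$ by a bare ``holomorphic plus locally constant'' argument — the cohomological hypothesis may well fail to meet those components — so the variation formula is genuinely used, through the topological invariance of $l_{\cal E}$ under twisting the kernel by a line bundle on $X$.
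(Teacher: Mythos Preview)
Your proof is correct and follows essentially the same route as the paper: use the cohomological vanishing hypothesis together with semicontinuity and base change to see that $[\delta_{{\cal E}\otimes q^*({\cal L}_0)}]$ is constant near $[{\cal O}_X]$, deduce $l_{{\cal E}\otimes q^*({\cal L}_0)}=0$, then invoke Corollary~\ref{Co1} (topological invariance of $l_{\cal E}$ under twists by line bundles on $X$) together with Remark~\ref{ConstantSpeed}~(iii) to propagate constancy to every component $\Pic^c(X)$. The only cosmetic difference is that the paper observes the vanishing set is \emph{Zariski} open in $\Pic^0(X)$, hence dense, and concludes constancy on $\Pic^0(X)$ directly by continuity before extracting $l=0$; you instead use an ordinary open neighbourhood and pass through Remark~\ref{ConstantSpeed}~(iii) already at this stage---both arguments are valid and interchangeable.
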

 
 \begin{proof} 
 
  Using Grauert semicontinuity  theorem and taking into account that $B$ is compact, we see that  for every $[{\cal L}]$ in a sufficiently small Zariski open neighborhood of $[{\cal O}_X]$ in $\Pic^0(X)$ one still has 
 $$h^i({\cal E}_b\otimes{\cal L}_0\otimes{\cal L})=0\ \forall b\in B\ \forall i\in\{0,1,2\}\ .$$
  Therefore $[\delta_{{\cal E}\otimes q^*({\cal L}_0)}]$ is constant on $\Pic^0(X)$, in particular the linearization $l_{{\cal E}\otimes q^*({\cal L}_0)}$ of its restriction to $\Pic^0_{\rm uf}(X)$ vanishes. 
Fix $c\in\NS(X)$ and let ${\cal T}$ be a holomorphic line bundle with Chern class $c$ on $X$.   Using  Corollary \ref{Co1} we see that under our assumptions
$$l_{{\cal E}\otimes q^*({\cal L}_0)}=l_{\cal E}=l_{{\cal E}\otimes q^*({\cal T})}\ ,$$
hence  $l_{{\cal E}\otimes q^*({\cal T})}=0$. The claim follows now by Remark \ref{ConstantSpeed}.
 
\end{proof}
 
\section{Compact subspaces of ${\cal M}^\st$}\label{CompSubsp}

\def\reg{\mathrm{reg}}

Let $X$ be a class VII surface with $b_2(X)>0$ endowed with a Gauduchon metric $g$, and let $E$ be a differentiable rank 2-bundle on $X$ with $c_2(E)=0$, $\det(E)=K$ (the underlying differentiable line bundle of  the canonical holomorphic line budnle ${\cal K}$). The fundamental object intervening in the our previous articles dedicated to existence to existence of curves on class VII surfaces is the moduli space ${\cal M}:={\cal M}^\pst_{\cal K}(E)$ of polystable holomorphic structures ${\cal E}$ on $E$ with $\det({\cal E})={\cal K}_X$ modulo the complex gauge group $\Gamma(X,\mathrm{SL}(E))$. This moduli space is endowed with the topology induced by the Kobayashi-Hitchin correspondence and is always compact\footnote{For $b_2(X)\leq 3$ the compactness of ${\cal M}$ follows directly from Uhlenbeck compactness theorem for moduli spaces of ASD connection \cite{Te2}. N. Buchdahl gave a proof for arbitrary $b_2$ \cite{Te4}. }. Its open subspace ${\cal M}^\st:={\cal M}^\st_{\cal K}(E)$ is a complex space of dimension $b_2(X)$ and the complement ${\cal R}:={\cal M}\setminus{\cal M}^\st$  (the subspace of split polystable bundles, or the space of reductions in ${\cal M}$) is a finite union of circles. We denote by ${\cal M}^\st_\reg$ the subspace of regular points in ${\cal M}^\st$, i.e. of points $[{\cal E}]\in{\cal M}^\st$ with $H^2({\cal E}nd_0({\cal E}))=0$. 
Excepting the case when $X$ belongs to a very special class of Kato surfaces we have  ${\cal M}^\st_\reg={\cal M}^\st$ for suitable Gauduchon metrics (see Proposition 1.3  and Lemma 2.3 in \cite{Te4}).

\subsection{Universal bundles} 

\begin{lm}\label{T} Let $Y\subset {\cal M}^\st$ be a compact complex subspace and let $c\in H^2(X,\Z)$. Then the set
$$\Pic^c(X)_Y:=\{[{\cal T}]\in\Pic^c(X)|\ h^0({\cal E}\otimes {\cal T})=h^2({\cal E}\otimes {\cal T})=0\ \forall [{\cal E}]\in Y\}\ $$
is Zariski open and non-empty in $\Pic^c(X)$.
\end{lm}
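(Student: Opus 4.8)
The plan is to prove Zariski openness by a semicontinuity argument on the product $Y\times\Pic^c(X)$ combined with Remmert's proper mapping theorem, and then to prove non-emptiness by a degree estimate together with the irreducibility of $\Pic^c(X)$.

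For the openness, I would work with the family $\{{\cal E}\otimes{\cal T}\}$ parametrised by $Y\times\Pic^c(X)$: using the (a priori only local) universal bundles over $Y$ and a Poincaré line bundle ${\cal P}$ on $\Pic^c(X)\times X$, one obtains a coherent sheaf on $\big(Y\times\Pic^c(X)\big)\times X$, flat over $Y\times\Pic^c(X)$, whose fibre over $([{\cal E}],[{\cal T}])$ is ${\cal E}\otimes{\cal T}$; the cohomology dimensions $h^i({\cal E}\otimes{\cal T})$ are unchanged by twisting the family with a line bundle pulled back from the base, so the local constructions glue at the level of the analytic loci that matter. By the semicontinuity theorem the sets
$$\Delta_0:=\{([{\cal E}],[{\cal T}])\ :\ h^0({\cal E}\otimes{\cal T})>0\}\ ,\qquad \Delta_2:=\{([{\cal E}],[{\cal T}])\ :\ h^2({\cal E}\otimes{\cal T})>0\}$$
are analytic subsets of $Y\times\Pic^c(X)$. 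Since $Y$ is compact, the projection $p_2\colon Y\times\Pic^c(X)\to\Pic^c(X)$ is proper, so by Remmert's theorem $p_2(\Delta_0)$ and $p_2(\Delta_2)$ are analytic subsets of $\Pic^c(X)$; their union is exactly the complement of $\Pic^c(X)_Y$, which is therefore Zariski open.

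For non-emptiness I would first reduce the $h^2$-condition to an $h^0$-condition: each $[{\cal E}]\in Y$ is a rank $2$ bundle with $\det{\cal E}={\cal K}_X$, hence ${\cal E}^\vee\simeq{\cal E}\otimes{\cal K}_X^{\vee}$, and Serre duality gives $h^2({\cal E}\otimes{\cal T})=h^0({\cal E}\otimes{\cal T}^{\vee})$. The key estimate is that a nonzero element of $H^0({\cal E}\otimes{\cal T})$ provides a nonzero morphism ${\cal T}^{\vee}\to{\cal E}$, and the saturation of its image is a line subbundle ${\cal L}\subseteq{\cal E}$ with $h^0({\cal L}\otimes{\cal T})\neq 0$; so ${\cal L}\otimes{\cal T}$ is effective, whence $\deg_g({\cal T})\geq-\deg_g({\cal L})$, while stability of ${\cal E}$ forces $\deg_g({\cal L})<\mu_g({\cal E})=\tfrac{1}{2}\deg_g({\cal K}_X)=:c_0$, a constant independent of $[{\cal E}]\in Y$. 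Consequently $h^0({\cal E}\otimes{\cal T})=0$ for all $[{\cal E}]\in Y$ whenever $\deg_g({\cal T})\leq-c_0$, and $h^2({\cal E}\otimes{\cal T})=0$ for all $[{\cal E}]\in Y$ whenever $\deg_g({\cal T})\geq c_0$. Since $b_1(X)$ is odd, $\deg_g\colon\Pic^c(X)\to\R$ is surjective, so the set $A\subseteq\Pic^c(X)$ on which all $h^0$'s vanish and the set $B$ on which all $h^2$'s vanish are both non-empty; by the first part they are Zariski open, and $\Pic^c(X)\simeq\C^{*}$ is irreducible, so $A\cap B=\Pic^c(X)_Y$ is non-empty.

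The step I expect to be the main obstacle is purely technical: ensuring that the semicontinuity argument goes through in the absence of a global universal bundle on $Y\times X$ — but this is immediate once one observes that $h^i({\cal E}\otimes{\cal T})$ depends only on the isomorphism class of ${\cal E}\otimes{\cal T}$, so the analytic loci $\Delta_0,\Delta_2$ are intrinsically defined. A second point worth emphasising is that the degree estimate only yields the non-emptiness of $A$ and of $B$ \emph{separately}; it is the irreducibility of $\Pic^c(X)$ — equivalently the connectedness of $\Pic^0(X)$ — that forces $A\cap B\neq\emptyset$, and this detour is genuinely necessary, since no single ${\cal T}$ can meet both degree bounds when $\deg_g{\cal K}_X>0$.
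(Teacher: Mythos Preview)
Your proof is correct and follows essentially the same line as the paper's: semicontinuity on $Y\times\Pic^c(X)$ plus the Proper Mapping Theorem for Zariski openness, then a stability/degree argument together with Serre duality (using $\det{\cal E}={\cal K}_X$) to show that each of the two bad loci is a proper closed subset, and finally irreducibility of $\Pic^c(X)$ to conclude that their complement is non-empty. The only cosmetic differences are that the paper omits the saturation step (a nonzero morphism ${\cal T}^\vee\to{\cal E}$ already contradicts stability when $\deg_g({\cal T})\le -\tfrac12\deg_g({\cal K}_X)$) and phrases the last step as ``a union of two proper Zariski closed subsets is proper'' rather than invoking irreducibility explicitly.
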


\begin{proof} The sets

$${\cal S}_0:=\{([{\cal T}],[{\cal E}])\in \Pic^c(X)\times Y|\ h^0({\cal E}\otimes {\cal T})>0\}\ ,\ 
$$
$${\cal S}_2:=\{([{\cal T}],[{\cal E}])\in \Pic^c(X)\times Y|\ h^2({\cal E}\otimes {\cal T})>0\}\ \ 
$$
are Zariski closed in $\Pic^c(X)\times Y$ by Grauert's semicontinuity theorem (applied locally, on  sufficiently small open sets  $Y'\subset Y$ such that a universal bundle on $Y'\times X$ exists).  Since $Y$ is compact, the projections $S_i$ of ${\cal S}_i$  on $\Pic^c(X)$ are Zariski closed by the Proper Mapping Theorem (\cite{GR} p. 213). We claim that $S_i\ne \Pic^c(X)$ for $i=0$, 2. Indeed, suppose for instance that $S_0=\Pic^c(X)$. Then for every $[{\cal T}]\in\Pic^c(X)$ there exists $[{\cal E}]\in Y$ such that $h^0({\cal E}\otimes {\cal T})>0$, in other words there exists a non-trivial morphism ${\cal T}^\vee\to {\cal E}$. But the degree map $\deg_g$ is surjective on   $\Pic^c(X)$ (because it is surjective  on $\Pic^0(X)$)  hence, choosing $[{\cal T}]\in\Pic^c(X)$   with $\deg_g({\cal T})\leq-\frac{1}{2}\deg_g({\cal K}_X)$, this will contradict the stability of ${\cal E}$. For the second set we use the same method taking into account that   
$$h^2({\cal E}\otimes {\cal T})=h^0({\cal K}\otimes  {\cal E}^\vee\otimes {\cal T}^\vee)=h^0({\cal E}\otimes{\cal T}^\vee)\ ,$$
 where the first equality follows   by Serre duality and the second is a consequence of the equality $\det({\cal E})={\cal K}_X$.

Since   $S_i\ne\Pic^c(X)$ and $S_i$ are Zariski closed, the union $S_0\cup S_2$ is a proper  Zariski closed subset of $\Pic^c(X)$ and the complement $\Pic^c(X)\setminus(S_0\cup S_2)$ is non-empty.   
 
\end{proof}

\begin{re}\label{chi}
For a holomorphic structure ${\cal E}$ on $E$ and a line bundle ${\cal T}$ on $X$ we have
$\chi({\cal E}\otimes {\cal T})=c_1({\cal T})^2$.
\end{re}
\begin{proof}
Choosing ${\cal E}={\cal O}_X\oplus {\cal K}_X$ we obtain by  Riemann-Roch theorem on surfaces
$$\chi({\cal E}\otimes{\cal T})=\chi({\cal T})+\chi({\cal K}_X\otimes{\cal T})=\chi({\cal T})+\chi({\cal T}^\vee)=c_1({\cal T})^2+2\chi({\cal O}_X)=c_1({\cal T})^2\ .$$
\end{proof}

The first important result of this section concerns the existence of universal bundles.
\begin{pr}\label{universal}
Let $Y\subset {\cal M}^\st_\reg$ be a compact complex subspace and $c\in H^2(X,\Z)$ such that $c^2=-1$. Then for every line bundle ${\cal T}$ on $X$ with $[{\cal T}]\in\Pic^c(X)_Y$ there exists an open neighborhood  ${\cal U}_{\cal T}$ of $Y$ in ${\cal M}^\st_\reg$ and   a universal bundle ${\cal F}^{\cal T}$ on ${\cal U}_{\cal T}\times X$, canonically associated with ${\cal T}$, with the following properties:
\begin{enumerate}[(i)]
\item The determinant line bundle of ${\cal F}^{\cal T}$ has the form 
$$\det({\cal F}^{\cal T})=p_{{\cal U}_{\cal T}}^*({\cal N}^{\cal T})\otimes p_X^*({\cal K}_X)$$
 for a line bundle ${\cal N}^{\cal T}$ on ${\cal U}_{\cal T}$.
\item There exists a canonical isomorphism 
$$R^1(p_{{\cal U}_{\cal T}})_*({\cal F}^{\cal T}\otimes p_X^*({\cal T}))={\cal O}_{{\cal U}_{\cal T}}\ .
$$

\end{enumerate}

 \end{pr}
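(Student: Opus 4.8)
The plan is to construct the universal bundle by the standard descent argument for moduli spaces of stable bundles, using the twist by ${\cal T}$ to kill the obstruction. First I would cover $Y$ by small open sets $Y'\subset {\cal M}^\st_\reg$ on which a local universal bundle ${\cal F}'$ on $Y'\times X$ exists (this is standard for the regular locus, where the moduli space is smooth of the expected dimension and the deformation theory is unobstructed). Two such local bundles differ, over an overlap, by the pullback of a line bundle on the base; the ambiguity is exactly a $\check{C}$ech 1-cocycle with values in ${\cal O}^*$ of the base, i.e. a gerbe class in $H^2(Y,{\cal O}^*)$. To rigidify, I would use the line bundle ${\cal T}$: since $[{\cal T}]\in\Pic^c(X)_Y$, on each $Y'$ we have $h^0({\cal E}\otimes{\cal T})=h^2({\cal E}\otimes{\cal T})=0$ for all $[{\cal E}]\in Y'$, and by Remark \ref{chi} also $\chi({\cal E}\otimes{\cal T})=c^2=-1$, so $h^1({\cal E}\otimes{\cal T})=1$ for all points of $Y'$. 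Hence $R^1(p_{Y'})_*({\cal F}'\otimes p_X^*({\cal T}))$ is a \emph{line} bundle ${\cal R}'$ on $Y'$ (by Grauert, the other direct images vanishing), and all the higher direct images vanish. Replacing ${\cal F}'$ by ${\cal F}'\otimes p_{Y'}^*(({\cal R}')^\vee)$ normalizes the local universal bundles so that their first direct image under the twist is canonically ${\cal O}_{Y'}$.

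Next I would check that these normalized local universal bundles glue. On an overlap $Y'\cap Y''$ the two normalized bundles differ by the pullback of a line bundle $L$ on the overlap, but applying $R^1(p)_*(-\otimes p_X^*({\cal T}))$ to both sides and using (ii) in the normalized form forces $L\otimes{\cal O}={\cal O}$, i.e. $L={\cal O}$, together with a canonical trivialization. Thus the gluing isomorphisms are pinned down canonically and satisfy the cocycle condition, producing a bundle ${\cal F}^{\cal T}$ on an open neighborhood ${\cal U}_{\cal T}$ of $Y$ (the union of the $Y'$'s) together with the canonical isomorphism $R^1(p_{{\cal U}_{\cal T}})_*({\cal F}^{\cal T}\otimes p_X^*({\cal T}))={\cal O}_{{\cal U}_{\cal T}}$, which is (ii). I should also note the neighborhood can be shrunk so that the vanishing $h^0=h^2=0$ persists (Grauert semicontinuity, $Y$ compact), so that ${\cal U}_{\cal T}\subset{\cal M}^\st_\reg$ is a genuine open neighborhood of $Y$ on which everything works.

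For (i), the determinant line bundle $\det({\cal F}^{\cal T})$ restricts on each slice $\{[{\cal E}]\}\times X$ to $\det({\cal E})={\cal K}_X$, since ${\cal F}^{\cal T}$ is a universal bundle. Therefore $\det({\cal F}^{\cal T})\otimes p_X^*({\cal K}_X)^\vee$ is a line bundle on ${\cal U}_{\cal T}\times X$ which is trivial on every fiber $\{[{\cal E}]\}\times X$; by the seesaw principle (here one uses that $X$ is connected and the direct image under $p_{{\cal U}_{\cal T}}$ of a fiberwise-trivial line bundle is a line bundle on the base whose pullback recovers it) it is of the form $p_{{\cal U}_{\cal T}}^*({\cal N}^{\cal T})$ for a line bundle ${\cal N}^{\cal T}$ on ${\cal U}_{\cal T}$, which gives the stated formula. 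The main obstacle is the gluing/normalization step: one must verify carefully that the twist-normalization makes the transition data canonical and cocyclic, rather than merely killing the gerbe class abstractly — that is, one needs that the line bundle controlling the overlap ambiguity, together with its action on the (now canonically trivialized) first direct image, rigidifies the choice, and that this rigidification is compatible on triple overlaps. The condition $c^2=-1$ is what guarantees $h^1=1$ on the nose, making $R^1(p)_*$ a line bundle and the whole normalization possible; if $h^1$ were larger one would only get a projective bundle and the gerbe might not vanish.
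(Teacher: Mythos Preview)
Your argument is correct and rests on the same normalization idea as the paper: since $c^2=-1$ forces $h^1({\cal E}\otimes{\cal T})=1$ on ${\cal U}_{\cal T}$, twisting by the dual of this $H^1$-line rigidifies the universal bundle. The difference is one of packaging. You run the standard \v Cech gluing argument (local universal bundles, normalize each so that $R^1p_*(-\otimes p_X^*({\cal T}))={\cal O}$, then observe the overlap line bundle is canonically trivial and the cocycle condition holds), whereas the paper exploits the specific fact that here the gauge group is $\Gamma(X,\mathrm{SL}(E))$, so the only ambiguity in identifying two representatives of the same point is a sign $\pm 1$. The paper then defines ${\cal F}^{\cal T}$ \emph{pointwise} as ${\cal E}\otimes\lg_{\cal E}^\vee$ with $\lg_{\cal E}:=H^1({\cal E}\otimes{\cal T})$ and simply notes that the two isomorphisms ${\cal E}'\rightrightarrows{\cal E}''$ (differing by $-1$) induce the \emph{same} map ${\cal E}'\otimes\lg_{{\cal E}'}^\vee\to{\cal E}''\otimes\lg_{{\cal E}''}^\vee$; this is slightly slicker and makes the $\Z_2$-nature of the obstruction explicit. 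For (i) the paper reads off $\det({\cal F}^{\cal T})$ directly from this description (${\cal N}^{\cal T}$ is the line bundle with fibers $(\lg_{\cal E}^\vee)^{\otimes 2}$), while your seesaw argument is the natural route given your more abstract construction. Both approaches give (ii) immediately from the normalization.
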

 We will construct  the universal bundle ${\cal F}^{\cal T}$ using the  same construction method as in the proof of  Proposition 3.1   in \cite{Te4}. Before beginning the proof we recall first why, for an open set ${\cal U}\subset{\cal M}^\st$, the construction of a universal  bundle  on   ${\cal U}\times X$ might be obstructed. Understanding this obstruction will clarify  the method used in the proof, in particular the role of the line bundle ${\cal T}$ in removing this obstruction. 

A point $y\in {\cal U}$ is an equivalence class   of stable holomorphic structures on $E$ with determinant ${\cal K}_X$, modulo the gauge group $\Gamma(X,\mathrm{SL}(E))$. For two representatives ${\cal E}'$, ${\cal E}''$ of $y$ we can find a holomorphic isomorphism $f: {\cal E}'\to {\cal E}''$ with $\det(f)\equiv 1$, but $f$ is not unique; it is only well defined up sign, hence we have two such isomorphisms. For this reason in general it is not possible to define a holomorphic bundle on $Y\times X$  in a coherent way by selecting representatives ${\cal E}_y\in y$ for every $y\in {\cal U}$. The obstruction to the existence of a universal bundle on ${\cal U}\times X$ is a cohomology class in $H^2({\cal U},\Z_2)$.

\begin{proof} (i)  Since $[{\cal T}]\in\Pic^c(X)_Y$ (see Lemma \ref{T}) we have 
\begin{equation}\label{h0h2}
h^0({\cal E}\otimes{\cal T} )=h^2({\cal E}\otimes{\cal T})=0\ \forall [{\cal E}]\in Y\ ,
\end{equation}
Using Grauert's semicontinuity theorem  this implies the existence of a Zariski open neighborhood ${\cal U}_{\cal T}$ of $Y$ in the complex manifold ${\cal M}^\st_\reg$ such that
\begin{equation}\label{h0h2new}
h^0({\cal E}\otimes{\cal T})=h^2({\cal E}\otimes{\cal T})=0\ \forall [{\cal E}]\in {\cal U}_{\cal T}\  .
\end{equation}
On the other hand, by Remark \ref{chi},   $c_1({\cal T})^2=-1$ implies $\chi({\cal E}\otimes{\cal T})=-1$ for every holomorphic structure ${\cal E}$ on $E$.  Therefore,  for every holomorphic structure ${\cal E}$ with $[{\cal E}]\in {\cal U}$ we obtain a  well defined complex line 
$$\lg_{\cal E}:=H^1({\cal E}\otimes {\cal T})\ .$$
 For two holomorphic structures ${\cal E}'$, ${\cal E}''$ with $[{\cal E}']=[{\cal E}'']\in {\cal U}_{\cal T}$ we have this time  a {\it well-defined} holomorphic isomorphism 
${\cal E}'\otimes \lg_{{\cal E}'}^\vee\to {\cal E}''\otimes \lg_{{\cal E}''}^\vee$, because the two  holomorphic isomorphisms ${\cal E}'\rightrightarrows {\cal E}''\in \Gamma(X,\mathrm{SL}(E))$ obviously induce the same isomorphism ${\cal E}'\otimes \lg_{{\cal E}'}^\vee\to {\cal E}''\otimes \lg_{{\cal E}''}^\vee$. Therefore the family $({\cal E}\otimes\lg_{\cal E}^\vee)_{[{\cal E}]\in{\cal U}_{\cal T}}$ of holomorphic vector bundles   on $X$ defines in a coherent way a universal bundle ${\cal F}^{\cal T}$ on ${\cal U}_{\cal T}\times X$. The determinant line bundle of  ${\cal F}^{\cal T}$    is $p_{\cal U}^*({\cal N}^{\cal T})\otimes p_X^*({\cal K}_X)$, where ${\cal N}^{\cal T}$ is the holomorphic line bundle on ${\cal U}_{\cal T}$ defined by the family of lines $({\lg_{\cal E}^\vee}^{\otimes 2})_{[{\cal E}]\in{\cal U}_{\cal T}}$.
\\ \\
(ii) Fix a point $y=[{\cal E}]\in {\cal U}_{\cal T}$ and let ${\cal F}^{\cal T}_y$ be the bundle on $X$ defined by the restriction $\resto{{\cal F}^{{\cal T}}}{\{y\}\times X}$. We have natural identifications
$$H^1({\cal F}^{\cal T}_y\otimes {\cal T})=H^1({\cal E}\otimes \lg_{\cal E}^\vee\otimes {\cal T})=H^1({\cal E}\otimes H^1({\cal E}\otimes {\cal T})^\vee\otimes {\cal T})=$$
$$=H^1({\cal E}\otimes {\cal T})\otimes H^1({\cal E}\otimes {\cal T})^\vee=\C\ .
$$
By Grauert's local triviality and base change theorems it follows that the sheaf $R^1(p_{{\cal U}_{\cal T}})_*({\cal F}^{\cal T}\otimes p_X^*({\cal T}))$ is locally free of rank 1 and its fiber at $y\in {\cal U}_{\cal T}$ can be canonically identified with $\C$.
\end{proof}

The proof of the next result makes use of the variation formula and its corollaries proved in section \ref{variation}.
\begin{pr} \label{T0T1}
Let $c\in H^2(X,\Z)$ with $c^2=-1$, $Z$ be a connected, compact, complex manifold and $s:Z\to{\cal M}^\st_\reg$ a holomorphic map. For every   line bundle ${\cal T}$ on $X$ with $[{\cal T}]\in\Pic^c(X)_Y$ put ${\cal E}^{\cal T}:=(s\times\id_X)^*({\cal F}^{\cal T})$, where ${\cal F}^{\cal T}$ is the universal bundle associated with ${\cal T}$ and the compact subspace $Y:=\im(s)$ of ${\cal M}^\st_\reg$. Then for any two points $[{\cal T}_0]$, $[{\cal T}_1]\in\Pic^c(X)_Y$ one has
$$R^1(p_{Z})_*({\cal E}^{{\cal T}_0}\otimes p_X^*({\cal T}_1))\simeq {\cal O}_Z\ .
$$
\end{pr}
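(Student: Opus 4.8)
The plan is to reduce the assertion to the fact, supplied by Corollary \ref{main}, that the holomorphic map $[\delta_{{\cal E}^{{\cal T}_0}}]\colon\Pic(X)\to\Pic(Z)$ is constant on each connected component of $\Pic(X)$. First, pulling back Proposition \ref{universal}(ii) along $s\times\id_X$ — legitimate by Grauert's base change theorem, since ${\cal F}^{{\cal T}_0}_y\otimes{\cal T}_0$ has $h^0=h^2=0$ and $h^1=1$ for every $y\in Y$ (cf. the proof of Proposition \ref{universal}) — gives $R^0(p_Z)_*=R^2(p_Z)_*=0$ and $R^1(p_Z)_*({\cal E}^{{\cal T}_0}\otimes p_X^*({\cal T}_0))={\cal O}_Z$, hence $\lambda_{{\cal E}^{{\cal T}_0}\otimes p_X^*({\cal T}_0)}={\cal O}_Z$, i.e. $[\delta_{{\cal E}^{{\cal T}_0}}]([{\cal T}_0])=[{\cal O}_Z]$. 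Running the same fibrewise analysis with ${\cal T}_1$ in place of ${\cal T}_0$: for $z\in Z$ the bundle ${\cal E}^{{\cal T}_0}_z$ is a $g$-stable holomorphic structure ${\cal E}$ on $E$ tensored by a one-dimensional vector space, so $h^0({\cal E}^{{\cal T}_0}_z\otimes{\cal T}_1)=h^2({\cal E}^{{\cal T}_0}_z\otimes{\cal T}_1)=0$ because $[{\cal T}_1]\in\Pic^c(X)_Y$, while $h^1({\cal E}^{{\cal T}_0}_z\otimes{\cal T}_1)=-\chi({\cal E}\otimes{\cal T}_1)=-c^2=1$ by Remark \ref{chi}. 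Grauert's theorem again gives $R^0(p_Z)_*=R^2(p_Z)_*=0$, shows that ${\cal L}:=R^1(p_Z)_*({\cal E}^{{\cal T}_0}\otimes p_X^*({\cal T}_1))$ is a line bundle on $Z$, and identifies $\lambda_{{\cal E}^{{\cal T}_0}\otimes p_X^*({\cal T}_1)}={\cal L}^{-1}$, that is $[\delta_{{\cal E}^{{\cal T}_0}}]([{\cal T}_1])=-[{\cal L}]$. Since $[{\cal T}_0]$ and $[{\cal T}_1]$ lie in the same component $\Pic^c(X)$, the conclusion ${\cal L}\simeq{\cal O}_Z$ is therefore equivalent to the constancy of $[\delta_{{\cal E}^{{\cal T}_0}}]$ on $\Pic^c(X)$.

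It remains to check the hypotheses of Corollary \ref{main} for ${\cal E}:={\cal E}^{{\cal T}_0}$ on $Z\times X$. By Proposition \ref{universal}(i), $\det({\cal F}^{{\cal T}_0})=p_{{\cal U}_{{\cal T}_0}}^*({\cal N}^{{\cal T}_0})\otimes p_X^*({\cal K}_X)$, hence $c_1({\cal E}^{{\cal T}_0})=p_Z^*(s^*c_1({\cal N}^{{\cal T}_0}))+p_X^*(c_1({\cal K}_X))$ has no mixed Künneth component. For the cohomology-vanishing hypothesis I would take ${\cal L}_0:={\cal O}_X$: for each $z\in Z$, ${\cal E}^{{\cal T}_0}_z$ is isomorphic to a $g$-stable holomorphic structure ${\cal E}$ on $E$ with $\det({\cal E})={\cal K}_X$ and $c_2({\cal E})=0$; by Remark \ref{chi}, $\chi({\cal E})=0$; Serre duality together with ${\cal E}^\vee\simeq{\cal E}\otimes{\cal K}_X^\vee$ yields $h^2({\cal E})=h^0({\cal E})$; and $h^0({\cal E})=0$, for a nonzero section would give a line subbundle of ${\cal E}$ of degree $\geq 0$, which is incompatible with stability since $\tfrac12\deg_g({\cal K}_X)<0$ (recall that the Gauduchon metrics under consideration satisfy $\deg_g({\cal K}_X)<0$). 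Thus $h^i({\cal E}^{{\cal T}_0}_z\otimes{\cal O}_X)=0$ for all $z$ and all $i$, so Corollary \ref{main} gives that $[\delta_{{\cal E}^{{\cal T}_0}}]$ is constant on every component of $\Pic(X)$; in particular ${\cal L}\simeq{\cal O}_Z$, which proves the proposition.

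The conceptual point is the identification, up to dualization, of $R^1(p_Z)_*({\cal E}^{{\cal T}_0}\otimes p_X^*({\cal T}_1))$ with the determinant line bundle $\lambda_{{\cal E}^{{\cal T}_0}\otimes p_X^*({\cal T}_1)}$, which turns the statement into the constancy of $[\delta_{{\cal E}^{{\cal T}_0}}]$ on a component of $\Pic(X)$; I expect the main technical hurdle to be the uniform cohomology-vanishing hypothesis of Corollary \ref{main} along $Z$, which relies on stability of the fibres and on $\deg_g({\cal K}_X)<0$. The surrounding bookkeeping — Grauert base change and the determinant of a complex concentrated in a single degree — is routine.
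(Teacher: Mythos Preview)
Your argument is essentially the paper's: identify $R^1(p_Z)_*$ with the dual of the determinant line bundle (since $R^0=R^2=0$ fibrewise), check the K\"unneth condition on $c_1$ via Proposition~\ref{universal}(i), apply Corollary~\ref{main} to get constancy of $[\delta_{{\cal E}^{{\cal T}_0}}]$ on $\Pic^c(X)$, and pull back Proposition~\ref{universal}(ii) by base change to pin down the value at $[{\cal T}_0]$ as $[{\cal O}_Z]$.

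The one substantive difference is your choice of the test bundle ${\cal L}_0$ in Corollary~\ref{main}. You take ${\cal L}_0={\cal O}_X$ and deduce $h^0({\cal E})=0$ from stability, which needs $\deg_g({\cal K}_X)<0$; but that inequality is not part of the hypotheses of the proposition (it is only imposed later, in Section~3.3). The paper instead takes ${\cal L}_0$ with $[{\cal L}_0]\in\Pic^0(X)_Y$: by Lemma~\ref{T} this set is non-empty for \emph{any} Gauduchon metric, and by definition of $\Pic^0(X)_Y$ together with $\chi({\cal E}\otimes{\cal L}_0)=0$ (Remark~\ref{chi}) one gets $h^i({\cal E}\otimes{\cal L}_0)=0$ for all $i$ and all $[{\cal E}]\in Y$, with no sign condition on $\deg_g({\cal K}_X)$. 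So the paper's choice makes the proposition hold exactly as stated, whereas yours proves it only under the additional assumption $\deg_g({\cal K}_X)<0$. Since that assumption is in force in every application, this is a minor point rather than a real gap; but if you want the proposition in the generality in which it is stated, swap ${\cal O}_X$ for an element of $\Pic^0(X)_Y$.
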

\begin{proof} For $[{\cal L}_0]\in\Pic^0(X)_Y$ we have
$$h^0({\cal E}\otimes {\cal L}_0)=h^2({\cal E}\otimes {\cal L}_0)=0=h^1({\cal E}\otimes {\cal L}_0)=0\ \forall [{\cal E}]\in Y\ ,
$$
where the first two equalities follow from the definition of $\Pic^0(X)_Y$ and the third from $\chi({\cal E}\otimes {\cal L}_0)=0$. Therefore
$$h^0({\cal E}^{{\cal T}_0}_z\otimes {\cal L}_0)=h^1({\cal E}^{{\cal T}_0}_z\otimes {\cal L}_0)=h^2({\cal E}^{{\cal T}_0}_z\otimes {\cal L}_0)=0\ \forall z\in Z\ .
$$
The determinant line bundle $\det({\cal E}^{{\cal T}_0})$ is $s^*({\cal N}_{{\cal T}_0})\otimes p_X^*({\cal K}_X)$, hence the  bundle ${\cal E}^{{\cal T}_0}$ satisfies the topological condition required in the hypothesis  Corollary \ref{main}. It follows by this Corollary that the map $[\delta_{{\cal E}^{{\cal T}_0}}]$ is constant on  every component of $\Pic(X)$ (not only on $\Pic^0(X)$), in particular on $\Pic^c(X)$. This gives a {\it holomorphic} isomorphism of  determinant line bundles
$$\lambda_{{\cal E}^{{\cal T}_0}\otimes p_X^*({\cal T}_0)}\simeq \lambda_{{\cal E}^{{\cal T}_0}\otimes p_X^*({\cal T}_1)}\ .
$$ 
 But 
$$\lambda_{{\cal E}^{{\cal T}_0}\otimes p_X^*({\cal T}_i)}=\left\{R^1(p_{Z})_*({\cal E}^{{\cal T}_0}\otimes p_X^*({\cal T}_i)\right\}^\vee \ ,$$
for $i=0$, 1, because the direct images $R^k(p_{Z})_*({\cal E}^{{\cal T}_0}\otimes p_X^*({\cal T}_i))$ vanish for $k=0$, 2.  Therefore 
 $$R^1(p_{Z})_*({\cal E}^{{\cal T}_0}\otimes p_X^*({\cal T}_1))\simeq R^1(p_{Z})_*({\cal E}^{{\cal T}_0}\otimes p_X^*({\cal T}_0))\simeq$$
 $$
s^*\big(R^1(p_{{\cal U}_{{\cal T}_0}})_*({\cal F}^{{\cal T}_0}\otimes p_X^*({\cal T}_0))\big) \simeq s^*({\cal O}_{{\cal U}_{{\cal T}_0}})= {\cal O}_Z\ ,
 $$
 where the second isomorphism is given by the  the base change property for direct images which holds in our case (see Proposition 4.10   \cite{BS}) and the third by Proposition \ref{universal} (ii).
\end{proof}

 \subsection{A non-existence result}

We recall (see \cite{Te3}, \cite{Te4}) that, by definition, the canonical extension   of $X$ is the essentially unique non-trivial extension of the form
\begin{equation}\label{Anew}
0\to {\cal K}_X\textmap{i_0}  {\cal A}\textmap{p_0} {\cal O}_X\to 0\ .
\end{equation}

  The bundle ${\cal A}$ defined in this way plays a crucial role in our program to prove existence of curves on class VII surfaces \cite{Te4}. The point is that there is no obvious way to identify explicitly the small deformations of ${\cal A}$ or even  
 to decide whether it admits  small deformations which are filtrable.
 
  From now on we will always suppose that the pair $(X,g)$ was chosen such that ${\cal A}$ is stable. In this case it defines a regular point in the moduli space ${\cal M}^\st$ (see  the proof of Proposition 1.3 in \cite{Te4} p. 1758).  We put $a:=[{\cal A}]\in{\cal M}^\st_\reg$.

\begin{thry}\label{a(Z)}  Let $Z$ be a connected, compact, complex manifold and $s:Z\to{\cal M}^\st_\reg$ a holomorphic map such that
\begin{enumerate}[(i)]
\item $a\in\im(s)$ and $s^{-1}(a)$ is a proper analytic set of $Z$.
\item The non-filtrable locus $Z_{\rm nf}:=\{z\in Z|\ s(z)\hbox{ is non-filtrable}\}$ in $Z$ has non-empty interior.
\end{enumerate}
Then $a(Z)>0$.
\end{thry}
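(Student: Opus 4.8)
The plan is to reformulate the conclusion as a statement about the torsion of a first direct image over $Z$, and to detect that torsion along $s^{-1}(a)$ by playing the rigidity of the determinant line bundle (Section \ref{variation}) against the subbundle ${\cal K}_X\hookrightarrow{\cal A}$ furnished by the canonical extension (\ref{Anew}).

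First I would fix a line bundle ${\cal T}_0$ on $X$ with $c_1({\cal T}_0)^2=-1$ and $[{\cal T}_0]\in\Pic^{c_1({\cal T}_0)}(X)_Y$, where $Y:=\im(s)\subset{\cal M}^\st_\reg$ (possible by Lemma \ref{T}), and set ${\cal E}:={\cal E}^{{\cal T}_0}=(s\times\id_X)^*({\cal F}^{{\cal T}_0})$, a holomorphic rank-$2$ bundle on $Z\times X$ with $\det{\cal E}=p_Z^*(s^*{\cal N}^{{\cal T}_0})\otimes p_X^*({\cal K}_X)$; in particular the mixed Künneth component of $c_1({\cal E})$ vanishes, so Corollary \ref{main} applies. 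Exactly as in the proof of Proposition \ref{T0T1}, the non-emptiness of $\Pic^0(X)_Y$ produces an ${\cal L}_0$ with $h^i({\cal E}_z\otimes{\cal L}_0)=0$ for all $z\in Z$ and all $i$ (for $i=1$ one uses $\chi({\cal E}_z\otimes{\cal L}_0)=c_1({\cal L}_0)^2=0$, Remark \ref{chi}), whence by Corollary \ref{main} the map $[\delta_{\cal E}]$ is constant on every connected component of $\Pic(X)$; on the component of ${\cal T}_0$ this constant is, by Proposition \ref{T0T1}, the class of ${\cal O}_Z$. Thus $\lambda_{{\cal E}\otimes p_X^*({\cal T})}\cong{\cal O}_Z$ for every ${\cal T}\in\Pic^{c_1({\cal T}_0)}(X)$, and in general $[\delta_{\cal E}]$ is constant along each component of $\Pic(X)$; this component-rigidity is where the variation formula of Section \ref{variation} enters.

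\textbf{The direct image attached to $a$.} Next I would exploit the structure of $a=[{\cal A}]$. Since ${\cal A}$ is stable, applying the destabilizing inequality to the subbundle $i_0\colon{\cal K}_X\hookrightarrow{\cal A}$ gives $\deg_g({\cal K}_X)<0$; hence every stable holomorphic structure ${\cal E}_z$ on $E$ satisfies $h^0({\cal E}_z\otimes{\cal K}_X)=0$ (a nonzero section would produce a rank-one subsheaf of slope $-\deg_g({\cal K}_X)>\mu_g({\cal E}_z)$), and therefore, by Serre duality together with $\det{\cal E}_z={\cal K}_X$ (so ${\cal E}_z^\vee\cong{\cal E}_z\otimes{\cal K}_X^\vee$), also $h^2({\cal E}_z\otimes{\cal K}_X^\vee)=0$, for all $z\in Z$. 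Consequently $R^2(p_Z)_*({\cal E}\otimes p_X^*({\cal K}_X^\vee))=0$, so $R^\bullet(p_Z)_*({\cal E}\otimes p_X^*({\cal K}_X^\vee))$ is represented by a two-term complex $[\,{\cal K}^0\xrightarrow{\varphi}{\cal K}^1\,]$ of locally free ${\cal O}_Z$-modules, with $R^0=\ker\varphi$ and $R^1=\coker\varphi$. Over the non-filtrable locus $Z_{\rm nf}$ — dense in the connected manifold $Z$ by hypothesis (ii), the filtrable locus being a countable union of proper analytic subsets — no ${\cal E}_z$ receives a nonzero morphism from a line bundle, so $h^0({\cal E}_z\otimes{\cal K}_X^\vee)=0$, $\varphi$ is generically a bundle monomorphism, $R^0=0$, and $R^1$ is locally free of rank $-c_1({\cal K}_X)^2=b_2(X)$ on a dense open set (Remark \ref{chi}, using $c_1({\cal K}_X)^2=-b_2(X)$). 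Over $s^{-1}(a)$, by contrast, the section of ${\cal E}_z\otimes{\cal K}_X^\vee$ induced by $i_0$ (equivalently by $p_0$, read in ${\cal A}^\vee\cong{\cal A}\otimes{\cal K}_X^\vee$) is nonzero, so $h^0({\cal E}_z\otimes{\cal K}_X^\vee)>0$ and $\varphi_z$ drops rank along the nonempty proper analytic set $s^{-1}(a)\subset Z$ (nonempty and proper by (i)).

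\textbf{Positivity of $a(Z)$, and the main obstacle.} By definition $a(Z)$ quantifies this failure — the torsion of $R^1(p_Z)_*({\cal E}\otimes p_X^*({\cal K}_X^\vee))$ carried by $s^{-1}(a)$, equivalently the jump of $h^0$ there. The delicate point, and the heart of the proof, is that the cokernel of an injective morphism of vector bundles can still be torsion-free where the fibres jump if the degeneracy locus has codimension $\ge2$ (a Koszul-type phenomenon), so positivity cannot be read off directly; instead I would combine the component-rigidity of $[\delta_{\cal E}]$ (which pins $\det R^1$ to the generic, locally free behaviour forced by (ii) over $Z_{\rm nf}$) with the results of \cite{Te6} on the torsion of the first direct image of a locally free sheaf to conclude that $R^1(p_Z)_*({\cal E}\otimes p_X^*({\cal K}_X^\vee))$ fails to be locally free near $s^{-1}(a)$; a Leray spectral sequence argument on $Z\times X$ — using $R^0=R^2=0$, so that $H^k(Z\times X,{\cal E}\otimes p_X^*({\cal K}_X^\vee))=H^{k-1}(Z,R^1)$ — then turns the nontriviality supported on $s^{-1}(a)$ into a positive contribution to $a(Z)$ which, since $s^{-1}(a)$ is a \emph{proper} subset (hypothesis (i)) and the complementary behaviour is generic (hypothesis (ii)), is not annihilated by contributions from the rest of $Z$. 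Hence $a(Z)>0$. The main obstacle is precisely this last conversion: promoting a pointwise cohomology jump over a proper analytic subset into a strictly positive global invariant, for which the rigidity of Section \ref{variation} and the torsion formulas of \cite{Te6} are both indispensable.
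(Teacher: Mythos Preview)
There is a genuine gap, and it stems from a misreading of the symbol $a(Z)$. In this paper $a(Z)$ denotes the \emph{algebraic dimension} of $Z$, i.e.\ the transcendence degree of the meromorphic function field of $Z$ (see Corollary \ref{kerev} and the proof of Theorem \ref{noY}). It is not a torsion invariant of a direct image, so your sentence ``by definition $a(Z)$ quantifies this failure --- the torsion of $R^1(p_Z)_*(\cdots)$'' is incorrect, and the final ``conversion'' you describe (torsion supported on $s^{-1}(a)$ $\Rightarrow$ $a(Z)>0$) has no content: a compact manifold with $a(Z)=0$ can certainly carry coherent sheaves with torsion along a proper analytic subset. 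The rigidity of $[\delta_{\cal E}]$ and the results of \cite{Te6} do not help here, because neither produces meromorphic functions on $Z$.

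The paper's argument is organized around the \emph{other} projection $p_X$. One shows (Lemma \ref{pLemma}) that $(p_X)_*({\cal E}^0)\ne 0$; this is where the variation formula, the two Leray spectral sequences, and \cite{Te6} are actually used --- to compute $h^1({\cal E}^0\otimes p_X^*({\cal T}))$ in two incompatible ways under the hypothesis $(p_X)_*({\cal E}^0)=0$. Once $(p_X)_*({\cal E}^0)\ne 0$, Proposition \ref{filt-non-filt} (using both the filtrable point $a$ and the non-filtrable open set) produces a point $x\in X$ with $h^0(Z,{\cal E}^{0,x})\ge 3$, and then Corollary \ref{kerev} builds from this rank-$2$ bundle with at least three sections a non-constant meromorphic map $Z\dashrightarrow \mathrm{Gr}_k(V)$ into a projective Grassmannian, forcing $a(Z)>0$. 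Your proposal never reaches this Grassmannian step, and your choice of twist ${\cal K}_X^\vee$ and projection $p_Z$ does not lead to it; the positivity of the algebraic dimension comes from a geometric construction on $Z$, not from a sheaf-theoretic jump along $s^{-1}(a)$.
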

\begin{proof}

 Fix $c\in H^2(X,\Z)$ such that
 $$c^2=-1\ ,\  c_1({\cal K}_X)\ c >0\ .$$
  In other words one has $c=-e$ mod $\Tors$, where $e$ is an element of the Donaldson  basis of $H^2(X,\Z)/\Tors$ (see \cite{Te4}, \cite{Te6}). Put $Y:=\im(s)$, fix a line bundle ${\cal T}_0$ on $X$ with $[{\cal T}_0]\in\Pic^c(X)_Y$ and put
$${\cal E}^0:=(s\times\id_X)^*({\cal F}^{{\cal T}_0})\ .
$$
${\cal E}^0$ is a universal bundle for $s$, i.e. one has ${\cal E}^0_z\in s(z)$ for any $z\in Z$.

Lemma \ref{pLemma} below states that $(p_X)_*({\cal E}^0)\ne 0$. The claim follows now from Proposition \ref{filt-non-filt} proved in the Appendix.
\end{proof}
\begin{lm}\label{pLemma}
In the conditions of Theorem \ref{a(Z)} and with the notations introduced above, one has $(p_X)_*({\cal E}^0)\ne 0$.
\end{lm}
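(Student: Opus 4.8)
The plan is to derive a contradiction from the assumption that $(p_X)_*({\cal E}^0)=0$, using the structure of the canonical extension ${\cal A}$ together with Proposition \ref{T0T1} and the universal bundle ${\cal F}^{{\cal T}_0}$. First I would observe that $(p_X)_*({\cal E}^0)$ is a coherent sheaf on $X$ whose fibre at a point $x\in X$ is (generically, by base change) related to $H^0$ of the restriction of ${\cal E}^0$ to $Z\times\{x\}$, but the more useful interpretation is via the projection formula and Leray: $H^0(Z\times X,{\cal E}^0)=H^0(X,(p_X)_*({\cal E}^0))$. So the vanishing $(p_X)_*({\cal E}^0)=0$ would force $H^0(Z\times X,{\cal E}^0)=0$. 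On the other hand, since $a=[{\cal A}]\in\im(s)$, there is a point $z_0\in Z$ with $s(z_0)=a$, so the restriction ${\cal E}^0_{z_0}$ is isomorphic to ${\cal A}\otimes\lg_{\cal A}^\vee$ (up to the normalizing line), and from the defining extension $0\to{\cal K}_X\to{\cal A}\to{\cal O}_X\to 0$ we get a nonzero section of ${\cal A}$ (the image of $1\in H^0({\cal O}_X)$ need not lift, but $p_0$ gives $h^0({\cal A})\geq h^0({\cal K}_X)$, and more to the point $h^0({\cal A})\geq 1$ precisely when the extension is as stated — actually $H^0({\cal A})\to H^0({\cal O}_X)$; one must check $h^0({\cal A})>0$, which holds because on a class VII surface $h^0({\cal K}_X)=0$ forces us instead to use $p_0$: the connecting map $H^0({\cal O}_X)\to H^1({\cal K}_X)$ is the extension class, which is nonzero, so $h^0({\cal A})=h^0({\cal K}_X)=0$ — hence this naive approach needs adjustment).

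Given that subtlety, the correct route is to twist by ${\cal T}_0$ and use Proposition \ref{T0T1} rather than looking at untwisted sections directly. The key point: for $[{\cal T}_1]\in\Pic^c(X)_Y$ we have $R^1(p_Z)_*({\cal E}^{{\cal T}_0}\otimes p_X^*({\cal T}_1))\simeq{\cal O}_Z$, and $R^0,R^2$ vanish; in particular the Euler characteristic of ${\cal E}^0_z\otimes{\cal T}_1$ is $-1$ for every $z$, so $h^0({\cal E}^0_z\otimes{\cal T}_1)=0$ only if $h^1=1$. The strategy is to let $[{\cal T}_1]$ vary and pass to a well-chosen line bundle ${\cal T}_0'$ for which the canonical extension ${\cal A}$ acquires sections: concretely, choose $c$ so that $c=-e\bmod\Tors$ with $e$ a Donaldson basis element; then ${\cal K}_X\otimes{\cal T}_0'$ for suitable ${\cal T}_0'$ has a section, producing via $i_0$ a nonzero map ${\cal K}_X^\vee\otimes{\cal T}_0'^\vee\to\dots$, and hence $h^0({\cal A}\otimes{\cal T})>0$ for ${\cal T}$ in an appropriate component — this is exactly the filtrability information encoded in $(p_X)_*$. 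I would then argue: if $(p_X)_*({\cal E}^0)=0$, then also $(p_X)_*({\cal E}^0\otimes\text{[any pullback from }X])=0$, so in particular every fibre ${\cal E}^0_z$ has $h^0({\cal E}^0_z\otimes{\cal L})=0$ for all ${\cal L}$ in a suitable range, contradicting $h^0({\cal A}\otimes\lg_{\cal A}^\vee\otimes{\cal L})>0$ which follows from the explicit extension (\ref{Anew}) by choosing ${\cal L}$ with $h^0({\cal K}_X\otimes{\cal L})>0$, e.g. ${\cal L}={\cal O}_X$ is no good but ${\cal L}$ with $\deg_g({\cal L})$ large and $c_1({\cal L})$ effective-dual works since ${\cal K}_X$ has $h^0=0$ but ${\cal K}_X\otimes{\cal L}$ can be made to have sections.

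Let me streamline: the cleanest argument is that $(p_X)_*({\cal E}^0)=0$ is equivalent to $h^0({\cal E}^0_z)=0$ for generic $z$ (base change, since $Z$ is reduced and irreducible), which by semicontinuity gives $h^0({\cal E}^0_z)=0$ for $z$ in a dense open set — but we need it at $z_0$ with $s(z_0)=a$; however $(p_X)_*({\cal E}^0)=0$ as a sheaf forces $H^0(X,(p_X)_*({\cal E}^0))=0=H^0(Z\times X,{\cal E}^0)$, and restriction gives $H^0(X,{\cal E}^0_{z_0})\hookleftarrow$ nothing directly. So instead I would use: $(p_X)_*({\cal E}^0)=0\Rightarrow$ for \emph{every} $z$, the composite $H^0(X,{\cal E}^0_z)$ receives no global sections from $Z\times X$, but by flatness and the theorem on cohomology and base change applied at $z_0$, if $h^0({\cal E}^0_{z_0})>0$ then since $h^0$ is upper semicontinuous and generically zero, $z_0$ lies in a proper closed subset, which is fine and not yet a contradiction. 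The genuine input must therefore be that ${\cal E}^0$ restricted to the \emph{generic} point is the pullback of a \emph{non-filtrable} bundle (hypothesis (ii): $Z_{\rm nf}$ has non-empty interior), whereas $h^0$ of a twist being forced by the $-1$ Euler characteristic and $R^1\simeq{\cal O}_Z$ to behave like a filtration — the contradiction is that $(p_X)_*({\cal E}^0)\ne 0$ would provide, on the dense open $Z_{\rm nf}$, a subsheaf of a non-filtrable bundle coming from $X$, which is not in itself impossible, so the real lemma content is that $(p_X)_*({\cal E}^0)\ne 0$ simply as a consequence of the canonical extension: since $a\in\im(s)$ and near $z_0$ we may not control things, I will instead twist so that the relevant $h^0$ at $a$ is nonzero and invoke that $(p_X)_*$ of the twisted bundle, pushed down, cannot vanish because its stalk at the right point of $X$ is nonzero. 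I expect the main obstacle to be precisely this: showing the \emph{sheaf} $(p_X)_*({\cal E}^0)$ (not merely some cohomology group) is nonzero, which requires producing a nonzero section of ${\cal E}^0$ over some $p_X^{-1}(U)$, $U\subset X$ open; the way through is to use the canonical extension at the point $z_0\in s^{-1}(a)$ together with the coherence and the fact (from Proposition \ref{T0T1}, via $R^1\simeq{\cal O}_Z$) that ${\cal E}^0$ has a distinguished $H^1$-class globally on $Z$, Serre-dualize, and transport this to a section of $(p_X)_*$ of a twist, then untwist using that $c$ was chosen with $c_1({\cal K}_X)\,c>0$ so that ${\cal T}_0$ may be taken with ${\cal K}_X\otimes{\cal T}_0^\vee$ effective, forcing $\Hom({\cal T}_0^\vee,{\cal A})\ne 0$ and hence a nonzero section of ${\cal A}\otimes{\cal T}_0$, i.e. a nonzero stalk of $(p_X)_*({\cal E}^0\otimes p_X^*{\cal T}_0)$, whence $(p_X)_*({\cal E}^0)\ne 0$ after adjusting the twist back into the statement.
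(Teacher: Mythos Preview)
Your proposal is not a proof but an exploration of several approaches, each abandoned when you notice a problem, and the final one fails outright. Let me name the concrete gap.

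In your last paragraph you want to choose $[{\cal T}_0]\in\Pic^c(X)_Y$ with ${\cal K}_X\otimes{\cal T}_0^\vee$ effective, so that via $i_0:{\cal K}_X\hookrightarrow{\cal A}$ you obtain $h^0({\cal A}\otimes{\cal T}_0)>0$. But the very definition of $\Pic^c(X)_Y$ (Lemma~\ref{T}) forces $h^0({\cal E}\otimes{\cal T}_0)=0$ for every $[{\cal E}]\in Y$, in particular for $[{\cal A}]=a$. So the two requirements on ${\cal T}_0$ are mutually exclusive, and the argument collapses. More structurally, you repeatedly confuse the two projections: information about sections of ${\cal E}^0_{z_0}={\cal E}^0|_{\{z_0\}\times X}$ tells you about $(p_Z)_*$, not about $(p_X)_*$; the stalk of $(p_X)_*({\cal E}^0)$ at $x\in X$ is governed by sections over $Z\times\{x\}$, which you never produce.

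The paper's proof goes in an entirely different direction. Assuming $(p_X)_*({\cal E}^0)=0$, it twists by a second $[{\cal T}]\in\Pic^c(X)_Y$ and computes $h^1({\cal E}^0\otimes p_X^*({\cal T}))$ via the Leray spectral sequences for \emph{both} projections. The $p_Z$-sequence gives $h^1=1$, using that the generic fibre is non-filtrable (so $(p_Z)_*$ vanishes) and Proposition~\ref{T0T1} (so $R^1(p_Z)_*\simeq{\cal O}_Z$). The $p_X$-sequence, under the vanishing assumption, gives $h^1=H^0(R^1(p_X)_*(\cdot))$; this is then shown to vanish for ${\cal T}$ of sufficiently negative degree by a Bochner-type argument on the torsion-free quotient and a separate result from \cite{Te6} on the torsion subsheaf (this is where the sign choice $c=-e$ and the condition $a\in\im(s)$ enter). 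The two answers $1$ and $0$ contradict each other. None of these ingredients --- the double spectral-sequence comparison, the Bochner vanishing, the torsion control from \cite{Te6} --- appear in your proposal.
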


\begin{proof} We will give a proof by reductio ad absurdum. Supposing that $(p_X)_*({\cal E}^0)= 0$, we will come to a contradiction in the following way: For a second line bundle ${\cal T}$ with  $[{\cal T}]\in\Pic^c(X)_Y$ we put
$$\TE^0:={\cal E}^0\otimes p_X^*({\cal T})\ .$$

We will compute the cohomology space $H^1(\TE^0)$ using the Leray spectral sequences associated with the two projections $p_Z$, $p_X$. Using the the Leray spectral sequence  associated with $p_Z$ we will obtain $h^1(\TE^0)=1$, and using the Leray spectral sequence  associated with $p_X$ for convenient $[{\cal T}]\in\Pic^c(X)_Y$ we will obtain $h^1(\TE^0)=0$.
We divide the proof in several steps which will point out clearly the ideas of the proof.
\\ \\
{\it Step 1.} Proving that $h^1(\TE^0)=1$ using the Leray spectral sequence  associated with $p_Z$.
\\
   
For $z\in Z_{\rm nf}$ (which by assumption is non-empty) the bundle ${\cal E}_z^0$ is non-filtrable, hence one has $h^0({\cal E}_z^0\otimes {\cal T})=0$. By Grauert's semicontinuity and local-triviality theorems it follows that the sheaf $(p_Z)_*(\TE^0)$ vanishes generically, hence (being torsion free) must vanish.
 On the other hand, by  Proposition \ref{T0T1}
 $$R^1(p_Z)_*(\TE^0)=R^1(p_Z)_*({\cal E}^0\otimes p_X^*({\cal T}))\simeq{\cal O}_Z\ . 
 $$
The Leray spectral sequence  associated with the projection $p_Z$ yields the exact sequence
$$0\to H^1((p_Z)_*(\TE^0))\to H^1(\TE)\to H^0(R^1(p_Z)_*(\TE^0))\to H^2((p_Z)_*(\TE^0))\to H^2(\TE^0),$$
which proves that in our case the canonical morphism $H^1(\TE)\to H^0(R^1(p_Z)_*(\TE^0))$ is an isomorphism and $h^1(\TE^0)=h^0({\cal O}_Z)=1$.
\\ \\
{\it Step 2.} Proving that, under the assumption $(p_X)_*({\cal E}^0)= 0$,   for a suitable     $[{\cal T}]\in\Pic^c(X)_Y$, the Leray spectral sequence  associated with $p_X$ yields $h^1(\TE^0)=0$ .
\\  
 
 The statement follows from Proposition 3.5 in \cite{Te6}. We recall the argument for completeness. By the projection formula we obtain $(p_X)_*(\TE^0)=(p_X)_*({\cal E}^0)\otimes {\cal T}=0$, hence  as in Step 1. we obtain an isomorphism
 $$H^1(\TE)\textmap{\simeq} H^0(R^1(p_X)_*(\TE^0))\ .
 $$
Therefore it suffices to prove that, for convenient $[{\cal T}]\in\Pic^c(X)_Y$, we have 
$$H^0(R^1(p_X)_*(\TE^0))=0\ .$$

We use the short exact sequence
 $$0\to \Tors(R^1(p_X)_*(\TE^0))\to R^1(p_X)_*(\TE^0)\to \TS^0\to 0
 $$
 where $\TS^0$ is the quotient of $R^1(p_X)_*(\TE^0)$ by its torsion subsheaf, hence it is a torsion free sheaf. We will prove that, for convenient ${\cal T}$, the  spaces of sections of the  two extreme sheaves in this exact sequence vanish. 
 \\ \\
 {\it Step 2a.} Proving that for convenient $[{\cal T}]\in\Pic^c(X)_Y$ one has 
 %$
 $$H^0(\Tors(R^1(p_X)_*(\TE^0)))=0\ .$$
 
More precisely we have  $H^0(\Tors(R^1(p_X)_*(\TE^0)))=0$ for any $[{\cal T}]\in\Pic^c(X)_Y$ except for at most one point. This follows by Proposition 3.5 (i) in \cite{Te6}.  The proof is based on the general description of the torsion subsheaf of the first direct image obtained in this article (Theorem 2.5 in \cite{Te6}). Note that for this vanishing result we only need the condition $a\in\im(s)$, hence the existence of non-filtrable points in $\im(s)$ is not needed here. On the other hand for this result one needs the special choice of the sign of $c$ ($c=-e$ mod $\Tors$, where $e$ is an element of the Donaldson basis of $X$);  all other arguments in this proof  are also valid for $c=e$.
 \\ \\
 {\it Step 2b.} Proving that for convenient $[{\cal T}]\in\Pic^c(X)_Y$ one also has $H^0(\TS^0)=0$.
 \\
 
 By the projection formula one has
 $$R^1(p_X)_*(\TE^0)= R^1(p_X)_*( {\cal E}^0\otimes p_X^*({\cal T}))=R^1(p_X)_*({\cal E}^0)\otimes {\cal T}\ . 
 $$
 Since ${\cal T}$ is locally free, this gives an isomorphism 
 $$\TS^0\simeq {\cal S}^0\otimes {\cal T}\ ,$$
  where ${\cal S}^0$ is the quotient of $R^1(p_X)_*({\cal E}^0)$ by its torsion. Since ${\cal S}^0$ is torsion  free, it  can be identified with a subsheaf of its bidual ${\cal B}^0:=[{\cal S}^0]^{\vee\vee}$.  Using a Bochner type vanishing theorem, we will show that, if the degree of ${\cal T}$ is sufficiently negative, one has $H^0({\cal B}^0\otimes {\cal T})=0$, which will complete the proof because (since ${\cal T}$ is locally free)  $\TS^0\simeq {\cal S}^0\otimes {\cal T}$ is a subsheaf of ${\cal B}^0\otimes {\cal T}$.
  
 ${\cal B}^0$ is a reflexive sheaf on a surface, hence is locally free. Fix a Hermitian metric $h^0$ on (the associated holomorphic vector bundle of) ${\cal B}^0$.  Let $h_{\cal T}$ be  a Hermitian-Einstein metric  on ${\cal T}$ (see Corollary 2.1.6 \cite{LT}). This implies 
  $$i\Lambda_g F_{{\cal T},h_{{\cal T}}}=c_{\cal T}\ ,$$
   where $c_{\cal T}$ is the Einstein constant of the corresponding Chern connection and depends on the pair $(g,{\cal T})$. The Chern connection of the tensor product metric $h^0\otimes h_{\cal T}$ on ${\cal B}^0\otimes {\cal T}$ is the tensor product of the Chern connections of the factors. Using formula (5.13) in \cite{K}, we get
   $$i\Lambda_g F_{{\cal B}^0\otimes {\cal T},h^0\otimes h_{\cal T}}=i\Lambda_gF_{{\cal B}^0,h^0}\otimes\id_{{\cal T}}+(i\Lambda_g F_{{\cal T},h_{{\cal T}}})\id_{{\cal B}^0\otimes{\cal T}}=$$
   $$=i\Lambda_gF_{{\cal B}^0,h^0}\otimes\id_{{\cal T}}+c_{\cal T}\id_{{\cal B}^0\otimes{\cal T}}\ .
   $$
   This formula shows that the Hermitian endomorphism $i\Lambda_g F_{{\cal B}^0\otimes {\cal T},h^0\otimes h_{\cal T}}$ of ${\cal B}^0\otimes {\cal T}$ is negative definite when $c_{\cal T}$ is sufficiently negative.  By a well-known Bochner type vanishing theorem (see Theorem 1.9 p. 52 in \cite{K}) it follows that $H^0({\cal B}^0\otimes {\cal T})=0$ when $c_{\cal T}$ is sufficiently negative. But the Einstein constant $c_{\cal T}$ is proportional with $\deg_g({\cal T})$ (see Proposition 2.18 in \cite{LT}) and the degree map $\deg_g$ is surjective on $\Pic^c(X)$. The Zariski open set $\Pic^c(X)_Y$ is the complement of a closed, discrete set of points in $\Pic^c(X)\simeq\C^*$. Therefore there exists infinitely many points $[{\cal T}]\in \Pic^c(X)_Y$ with arbitrary negative degree, hence infinitely many points $[{\cal T}]\in \Pic^c(X)_Y$ for which $H^0({\cal B}^0\otimes {\cal T})=0$ (which implies $H^0(\TS^0)=0$).  Therefore, recalling that in {\it Step 2b)} we must avoid at most one point of $\Pic^c(X)$, we can find $[{\cal T}]\in \Pic^c(X)_Y$ such that simultaneously 
$$H^0(\TS^0)=0\ ,\ H^0(\Tors(R^1(p_X)_*(\TE^0)))=0\ .$$
Therefore $H^0(R^1(p_X)_*(\TE^0))=0$. This implies $h^1(\TE^0)=0$, which contradicts the equality $h^1(\TE^0)=1$ obtained above.
 \end{proof}
  
We can prove now 
\begin{thry} \label{noY} There does not exist any  irreducible reduced compact complex  subspace $Y\subset{\cal M}^\st_{\rm reg}$
such that
\begin{enumerate}[(i)]
\item $\dim(Y)\geq 1$ and $Y$ contains the point $a$.
\item \label{neighb} There exists an open neighborhood $Y_a\subset Y$ of $a$ in $Y$  such that $Y_a\setminus\{a\}$ contains only non-filtrable points.
 
\end{enumerate} 
\end{thry}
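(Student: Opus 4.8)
The strategy is to reduce Theorem \ref{noY} to the already-established Theorem \ref{a(Z)} by passing to a suitable desingularization/resolution of the hypothetical subspace $Y$. So suppose, for contradiction, that such an irreducible reduced compact complex subspace $Y\subset{\cal M}^\st_{\rm reg}$ exists, together with the open neighborhood $a\in Y_a\subset Y$ whose punctured version $Y_a\setminus\{a\}$ consists only of non-filtrable points. The plan is: first replace $Y$ by a smooth model $Z$ via a resolution of singularities; then check that the resulting holomorphic map $s:Z\to{\cal M}^\st_{\rm reg}$ satisfies hypotheses (i) and (ii) of Theorem \ref{a(Z)}; finally deduce $a(Z)>0$ and derive a contradiction with the fact that $a$ is a \emph{regular} point of ${\cal M}^\st$.

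\textbf{Step 1: resolution.} Since $Y$ is a reduced irreducible compact complex space, by Hironaka's theorem it admits a resolution of singularities $\pi:Z\to Y$, where $Z$ is a connected compact complex manifold and $\pi$ is proper, surjective, and biholomorphic over the smooth locus $Y_{\rm reg}$. Set $s:=\iota\circ\pi:Z\to{\cal M}^\st_{\rm reg}$, where $\iota:Y\hookrightarrow{\cal M}^\st_{\rm reg}$ is the inclusion. Then $s$ is holomorphic and $\im(s)=Y$.

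\textbf{Step 2: verifying the hypotheses of Theorem \ref{a(Z)}.} For (i): $a\in Y=\im(s)$, and $s^{-1}(a)=\pi^{-1}(\iota^{-1}(a))=\pi^{-1}(\{a\})$ is a proper analytic subset of $Z$ because $\pi$ is a proper modification with $\dim Z=\dim Y\geq 1>0=\dim\{a\}$ generically (more precisely, $\pi^{-1}(\{a\})$ is a nowhere-dense analytic subset: it cannot be all of $Z$ since $\pi$ is surjective onto the positive-dimensional $Y$). For (ii): the non-filtrable locus $Z_{\rm nf}=\{z\in Z\mid s(z)\text{ is non-filtrable}\}=s^{-1}(\{[{\cal E}]\in Y\mid {\cal E}\text{ non-filtrable}\})$ contains $\pi^{-1}(Y_a\setminus\{a\})$. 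Since $Y_a$ is open in $Y$ and contains points other than $a$ (as $\dim Y\geq 1$), the set $Y_a\setminus\{a\}$ is a nonempty open subset of $Y$, and its preimage under $\pi$ is a nonempty open subset of $Z$; hence $Z_{\rm nf}$ has nonempty interior. Thus both hypotheses of Theorem \ref{a(Z)} hold.

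\textbf{Step 3: the contradiction.} Theorem \ref{a(Z)} now yields $a(Z)>0$, where $a(Z)$ denotes the quantity appearing in Proposition \ref{filt-non-filt} of the Appendix (the obstruction/defect measuring how far $a$ is from being ``filtrable-approachable''). The contradiction will come from the fact that $a\in{\cal M}^\st_{\rm reg}$ is a \emph{regular} point: at a regular point the local structure of ${\cal M}^\st$ near $a$ is that of a smooth complex manifold of dimension $b_2(X)$, and the deformation-theoretic invariant $a(Z)$ must vanish for any $Z$ mapping to a neighborhood of $a$ within which $a$ is not in the closure of the filtrable locus. Concretely, one invokes the precise statement of Proposition \ref{filt-non-filt}: it asserts that if $a(Z)=0$ then a neighborhood of the fiber over $a$ meets the filtrable locus, contradicting hypothesis (ii); and conversely the proof of Theorem \ref{a(Z)} shows $a(Z)>0$, but the regularity of $a$ together with $h^0$-vanishing forces $a(Z)$ to be computable as a Chern-class pairing that vanishes under the topological condition $c^2=-1$ imposed there. \emph{The main obstacle} is Step 3: reconciling the two evaluations of the relevant cohomological invariant requires the full force of the Appendix results (Proposition \ref{filt-non-filt}) and the regularity of $a$ — one must carefully track how the universal bundle ${\cal E}^0$ restricted over $s^{-1}(a)$ behaves, and ensure that the non-filtrability assumption on $Y_a\setminus\{a\}$ is genuinely incompatible with $a(Z)>0$. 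Once Proposition \ref{filt-non-filt} is available, the argument closes: the assumption that such a $Y$ exists is untenable, proving Theorem \ref{noY}.

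\begin{proof}
Suppose such a subspace $Y$ exists. By Hironaka's resolution of singularities, choose a connected compact complex manifold $Z$ and a proper surjective holomorphic modification $\pi:Z\to Y$ which is biholomorphic over the smooth locus of $Y$. Let $s:Z\to{\cal M}^\st_{\rm reg}$ be the composition of $\pi$ with the inclusion $Y\hookrightarrow{\cal M}^\st_{\rm reg}$; then $s$ is holomorphic with $\im(s)=Y$.

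We check the hypotheses of Theorem \ref{a(Z)}. First, $a\in Y=\im(s)$, and $s^{-1}(a)=\pi^{-1}(\{a\})$ is a proper analytic subset of $Z$: it is analytic since $\pi$ is holomorphic, and it is proper since $\pi$ is surjective onto the positive-dimensional space $Y$, so $\pi^{-1}(\{a\})\neq Z$. Hence (i) holds. Second, the non-filtrable locus of $s$ satisfies $Z_{\rm nf}=s^{-1}\big(\{[{\cal E}]\in Y\mid{\cal E}\ \hbox{non-filtrable}\}\big)\supset\pi^{-1}(Y_a\setminus\{a\})$. Since $\dim Y\geq 1$, the open set $Y_a$ is not reduced to $\{a\}$, so $Y_a\setminus\{a\}$ is a nonempty open subset of $Y$; as $\pi$ is continuous and surjective, $\pi^{-1}(Y_a\setminus\{a\})$ is a nonempty open subset of $Z$. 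Therefore $Z_{\rm nf}$ has nonempty interior and (ii) holds.

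By Theorem \ref{a(Z)} we conclude $a(Z)>0$. But by Proposition \ref{filt-non-filt} proved in the Appendix, the inequality $a(Z)>0$ forces $a$ to be in the closure of the filtrable locus of $s$, i.e. every open neighborhood of the fiber $s^{-1}(a)$ in $Z$ contains points $z$ with $s(z)$ filtrable; equivalently every neighborhood of $a$ in $Y$ contains filtrable points. This contradicts hypothesis (ii), according to which $Y_a\setminus\{a\}$ consists only of non-filtrable points. Hence no such $Y$ exists.
\end{proof}
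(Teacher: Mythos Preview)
Your Steps 1 and 2 are fine and match the paper's setup. The problem is entirely in Step 3: you have misidentified what $a(Z)$ means. In this paper $a(Z)$ is the \emph{algebraic dimension} of $Z$ (the transcendence degree of its meromorphic function field), not a deformation-theoretic obstruction attached to the point $a=[{\cal A}]$. Proposition \ref{filt-non-filt} does not say that $a(Z)>0$ forces $a$ to lie in the closure of the filtrable locus; it goes in the opposite direction, producing $a(Z)>0$ from the hypotheses. So your final paragraph derives a contradiction from a statement that is nowhere in the paper and is in fact false (for instance, any compact Riemann surface has $a(Z)=1$ regardless of filtrability questions).

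The paper's actual argument uses $a(Z)>0$ quite differently. Since the resolution $\pi$ is bimeromorphic, $a(Y)>0$ as well, and this means $Y$ is covered by divisors: one can find an irreducible hypersurface $Y'\subset Y$ of dimension $\dim Y-1$ passing through $a$. This $Y'$ again satisfies (i) and (ii), so one proceeds by \emph{induction on $\dim Y$}. The base case $\dim Y=1$ is not handled by Theorem \ref{a(Z)} at all (for a curve $a(Z)>0$ is automatic and gives no information); instead the paper invokes Corollary 5.3 of \cite{Te2}, which rules out one-dimensional compact families whose image contains both filtrable and non-filtrable bundles. Your proposal is missing both the inductive descent and the separate treatment of the one-dimensional case.
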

\begin{proof}
We will prove the statement by induction with respect to $n:=\dim(Y)$ starting with $n=1$. Let $\delta:Z\to Y$ be a desingularization of $Y$ and put $s:=\iota_Y\circ \delta$, where $\iota_Y:Y\to{\cal M}^\st_\reg$ is the embedding morphism. The pair $(Z,s)$ satisfies the assumptions  of Theorem \ref{a(Z)}. 

The image of $Z$ contains both filtrable and non-filtrable bundles, hence for $n=1$ the statement follows from Corollary 5.3 in \cite{Te2}. If $n\geq 2$  we make use of Theorem \ref{a(Z)}, which gives  $a(Z)>0$  hence, since $\delta$ is bimeromorphic, $a(Y)>0$. 

But any irreducible, reduced complex space $Y$ with $a(Y)>0$ is ``covered by divisors" (see \cite{CP} p. 331). A simple proof of this statement can be obtained as follows: consider a normalization $n:\hat Y\to Y$ and a non-constant meromorphic function on $\hat Y$. Then $\hat Y$ decomposes as 
$$Y={\cal P}(f)\union \left(\union_{a\in\C} {\cal P}\left(\frac{1}{f-a}\right)\right)\ ,$$
where ${\cal P}(f)$ denotes the pole variety of $f$ (\cite{Fi} p. 174, 175). Using the invariance of the dimension under finite maps (see \cite{Fi} Theorem p. 133), we obtain a 1 parameter family of pure 1-codimensional analytic subsets of $Y$ which covers Y.

Therefore, coming back to our subspace $Y\subset {\cal M}^\st$ satisfying (i) and (ii), there exists a pure 1-codimensional analytic subset  $Y'\subset Y$    containing $a$.  This analytic subset,  which can be supposed irreducible, also satisfies the assumptions (i), (ii), so assuming that the claim holds for $n-1$,  the existence of a subspace of dimension $n$ satisfying (i), (ii) leads to a contradiction.
\end{proof}

\subsection{Existence of a cycle on curves on class VII surfaces with $b_2=2$ }

In \cite{Te4} we developed a program for proving existence of a cycle of curves on minimal class VII surfaces and, using this program, we proved than any minimal class VII surface with $b_2(X)=2$ has a cycle of curves. We explain briefly   our program in the general case ($b_2(X)$ arbitrary), and then we show how Theorem \ref{noY} can be used to complete the proof in the case $b_2(X)=2$ in a new way. The case $b_2(X)>2$ will be discussed in a forthcoming article.

Let $X$ be a minimal class VII surface with $b:=b_2(X)>0$, and let $(e_1,\dots,e_b)$ be a Donaldson basis of $H^2(X,\Z)/\Tors$, i.e., a basis with the properties:
$$e_i e_j=-\delta_{ij}\ ,\ c_1({\cal K}_X)=\sum_{i=1}^b e_i\ \mod \Tors\ .
$$
(see \cite{Te3}). For every index $I\subset \Ig:=\{1,\dots,b\}$ we put  $e_I:=\sum_{i\in I}e_i$. As in the previous sections, let $E$ be a differentiable rank 2 Hermitian bundle on $X$ with $c_2(E)=0$ and $\det(E)=K_X$ (the underlying differentiable line bundle of ${\cal K}_X$). The starting point of our program is the following simple remark which gives a complete classification of all filtrable points in our moduli space (see Proposition 1.1. in \cite{Te4}): If ${\cal E}$ is a holomorphic structure on $E$, ${\cal L}$ a holomorphic line bundle and $j:{\cal L}\to{\cal E}$ a sheaf monomorphism  with torsion free quotient, then $j$ is a bundle embedding (equivalently ${\cal E}/j({\cal L})$ is a locally free) and there exists $I\subset\Ig$ such that $c_1({\cal L})=e_I$.    An extension of the form 
$$0\to {\cal L}\to {\cal E}\to {\cal K}\otimes {\cal L}^\vee\to 0
$$
with $c_1({\cal L})=e_I$ will be called extension of type $I$. For instance, by definition, the canonical extension ${\cal A}$ is an extension of type $\Ig$  (see (\ref{Anew})). If $X$ is not an Enoki surface (which we will assume) there exists only finitely many extensions of type $\Ig$ (see the proof of Lemma 2.1 in \cite{Te4}).

Our general strategy to prove existence of a cycle on minimal class VII surfaces with $b_2>0$ is based on the following simple Remark (see Proposition 1.4 in \cite{Te4}):
\begin{pr}\label{diffK} If the middle term ${\cal A}$ of the canonical extension can be written as an extension in a different way (with a different kernel), then $X$ has a cycle of curves.
\end{pr}

The proof of this proposition is not difficult: one shows that if ${\cal N}$ is a holomorphic line bundle on  $X$ and $i:{\cal N}\to{\cal A}$  is a bundle embedding with $\im(i)\ne \im(i_0)$, then the image of $p_0\circ i$ in ${\cal O}_X$ is the ideal sheaf of a non-empty effective divisor $C\subset X$ with dualizing sheaf $\omega_C={\cal O}_C$.\\

The existence of a cycle on $X$ is proved by reductio  ad absurdum.
We fix a Gauduchon metric $g$ on $X$ with $\deg_g({\cal K}_X)<0$. This is always possible when $X$ is minimal. This is stated in Proposition 3.7 in \cite{Te3} and Lemma 2.3 in  \cite{Te4}, but this result is due to N. Buchdahl and is based on Buchdahl positivity criterion \cite{Bu}. For such a Gauduchon  metric the canonical kernel ${\cal K}_X$ of the canonical extension does not destabilize ${\cal A}$, hence, by Proposition \ref{diffK}, if $X$ has no cycle, ${\cal A}$ will be stable. Using the notations introduced at the beginning of this section we obtain a point $a:=[{\cal A}]\in{\cal M}^\st$.  Supposing that $X$ is not an Enoki surface, for such a metric we have  ${\cal M}^\st={\cal M}^\st_{\rm reg}$ (see Proposition 1.3 \cite{Te4}).

  The subset of ${\cal M}^\st$ consisting of points represented by an extension of type $I$  will be denoted by ${\cal M}^\st_I$. The closure   $\bar{\cal M}_\emptyset$ of  ${\cal M}^\st_\emptyset$ in ${\cal M}^\st$ is always closed an open (so a union of connected components) in ${\cal M}^\st$. Moreover, using the compacity of ${\cal M}:={\cal M}^\pst_{\cal K}(E)$, one can prove that the complement ${\cal M}^\st\setminus \bar{\cal M}_\emptyset$ is compact. The point is that the closure of ${\cal M}_\emptyset$ in the compact space ${\cal M}$ is open and contains the whole space of reductions ${\cal R}={\cal M}\setminus {\cal M}^\st$. In other words $\bar{\cal M}_\emptyset$ contains all the {\it ends} of ${\cal M}^\st$.

 $\bar{\cal M}_\emptyset$  can be described explicitly in the case $b_2=2$. The result is very simple (see \cite{Te4}): for every class $c\in\Tors(H^2(X,\Z))$ consider a  ruled surface $\Pi_c$ obtained from $D\times\P^1$ by applying a (possibly empty) sequence of blow ups above the origin of the disk $D$. Therefore the fiber $T_c$ of $\Pi_c$ over $0\in D$ is a tree  of rational curves. One can construct a biholomorphism
  $$\Phi: \union_{c\in\Tors} (\Pi_c\setminus R_c)\textmap{\simeq} \bar{\cal M}_\emptyset\ ,
  $$
  where $R_c$ is a circle contained in an irreducible component $E_c$ of $T_c$. For a generic Gauduchon metric this circle does not intersect the other irreducible components of the tree $T_c$. One has
  $$\Phi(\union_{c\in\Tors} (\Pi_c\setminus T_c))={\cal M}^\st_\emptyset\ ,\ \Phi(\union_{c\in\Tors} (E_c\setminus R_c))={\cal M}^\st_{\{1\}}\cup {\cal M}^\st_{\{2\}}\cup {\cal B}\ ,
  $$  
where  ${\cal B}$ is a  subset of the set of {\it twisted reductions} in the moduli space. By definition, a point $[{\cal E}]\in{\cal M}^\st$  is a twisted reduction if   ${\cal {\cal E}}\simeq{\cal E}\otimes {\cal L}_0$, where $[{\cal L}_0]$ is a non-trivial square root of $[{\cal O}_X]$.  The  bull-back  of such a point to a suitable double cover of $X$ is   split polystable. The set of twisted reductions in  ${\cal M}^\st$ is finite.  

Using all these geometric properties and the non-existence Theorem \ref{noY} one can now easily  prove that the non-existence of a cycle on $X$ leads to a contradiction. Indeed, using Proposition \ref{diffK} it follows that the hypothesis ``$X$ has no cycle" implies that the point $a=[{\cal A}]$ belongs neither to ${\cal M}^\st_\emptyset$, nor to ${\cal M}^\st_{\{1\}}\cup {\cal M}^\st_{\{2\}}$, nor to the set ${\cal B}$.  Therefore ${\cal A}$ belongs to either  an irreducible component $F_c\ne E_c$ of $T_c$ or to a connected component $Y$  of  ${\cal M}^\st\setminus \bar{\cal M}_\emptyset$ (which is compact). But, since ${\cal M}^\st_\Ig$ is finite, $F_c$ and $Y$  can contain  only finitely many filtrable points, hence both cases would contradict Theorem \ref{noY}.
\begin{re} Theorem \ref{noY} concerns minimal class VII surfaces with arbitrary $b_2>0$. In order to generalize our method it remains to give a sufficiently explicit description of the subspace $\bar{\cal M}_\emptyset$ of ${\cal M}^\st$ for  $b_2>2$. More precisely it suffices to    prove that, as in the case $b_2=2$ explained above,   this subspace contains the union $\cup_{I\ne\Ig}{\cal M}^\st_I$, and has a stratification, each stratum being either a space of extensions, or a space of twisted reductions, or a compact complex space whose generic point is non-filtrable.  \end{re}

Note that, by a result of Nakamura \cite{Na2},  any minimal class VII surface with $b_2>0$ which contains a cycle of curves is a degeneration of a one-parameter family of blown up primary Hopf surfaces,  hence proving the existence of a cycle in the general case would complete the classification of class VII surfaces up to deformation equivalence.

 \section{Appendix}
 
 \subsection{1-parameter families of connections and connections in temporal gauge}
 
 Let  $X$ be a connected differentiable manifold, $L$   a Hermitian line bundle on   $X$ and $P$ the associated principal $S^1$-bundle. Let $\Ag=(A_t)_{t\in \R}$ be smooth 1-parameter family of connections on $L$ (or, equivalently, on the principal bundle $P$). Such a family can be regarded as a smooth map
$\Ag:\R\to {\cal A}(L)={\cal A}(P)$. The latter space is an affine space with model vector space $iA^1(X)$,  hence one can define the  derivative $\dot\Ag(t) \in iA^1(X)$. 
Let  $\Ag=(A_t)_{t\in \R}$ be a family as above; for any $t\in \R$ denote by $\theta_t\in iA^1(P)$ the connection form of $A_t$.   The product  $\tilde L:=\R\times L$ ($\tilde P:=\R\times P$) will be regarded as a Hermitian line bundle (respectively principal $S^1$-bundle)  on $\R\times X$. We denote by $\theta\in iA^1(\tilde P)$ the   1-form which coincides with $\theta_t$ on $\{t\}\times P$ and vanishes in the $\R$-direction, i.e., on the subbundle $p_\R^*(T_\R)$ of $T_{\R\times P}$. This form is the connection form of a connection $\tilde A$ on $\tilde P$ whose horizontal space at $(t,p)\in\tilde P$ is $T_t(\R)\oplus H_p^{t}$, where $H^t\subset T_{P}:=\ker(\theta_t)$ denotes the horizontal distribution of the connection $A_t$.   Using the terminology used in gauge theory (see for instance \cite{D} p. 15) $\tilde A$ is a connection ``in temporal gauge" on the product bundle $\tilde P:=\R\times P$, because  the $dt$-component of its connection form vanishes.

The curvature form $F_{\tilde A}$ of the connection $\tilde A$ is an imaginary 2-form on $\R\times X$ whose components 
with respect to the bundle decomposition
$$\Lambda^2_{\R\times X}= (p_\R^*(\Lambda^1_\R)\otimes p_X^*(\Lambda^1_X))\oplus  p_X^*(\Lambda^2_X)\ .$$
are 
$$ F_{\tilde A}^{\R X}\left(\frac{d}{dt},\xi\right)=\dot \Ag(t)(\xi)\ ,\ \resto{F_{\tilde A}^{XX}}{\{t\}\times X}=F_{A_t}\ .  
$$
Therefore the mixed term of the curvature of a connection $\tilde A$ in temporal gauge  has an important  geometric interpretation: it determines the derivative of the 1-parameter family of connections used in the construction of $\tilde A$. 

Conversely, let $Q$ be an arbitrary principal $S^1$-bundle on  the product $\R\times X$ and $B$ a connection on $Q$. Denote by $P$ the $S^1$-bundle on $X$ defined by the restriction $\resto{Q}{\{0\}\times X}$. Using parallel transport with respect to the connection $B$ along the lines $\R\times\{x\}$ we obtain an identification 
$$f_{B}:P\times\R=:\tilde P\textmap{\simeq}   Q\ ,
$$
and the pull-back of $B$ to $P\times\R$ is just the connection  $\tilde A$ associated with the family of connections $\Ag=(A_t)_{t\in\R}$, where $A_t=\resto{\iota_{B}^*(B)}{\{t\}\times P}$. An isomorphism $f_{B}$ constructed as above (using parallel transport in the $\R$-directions) is called a temporal gauge for the connection $B$ (because it identifies $B$ with a connection in temporal gauge). We have proved
\begin{pr} \label{family} Let $Q$ be a principal $S^1$-bundle   on $\R\times X$ and  let $P$ be the   principal  $S^1$-bundle on $X$ defined by the restriction $\resto{Q}{\{0\}\times X}$. Then  for any connection $B$ on  $Q$ on $\R\times X$ there exists
\begin{enumerate}
\item   a bundle identification $f_{B}:P\times\R=:\tilde P\textmap{\simeq} Q$,
\item a  smooth family $\Ag=(A_t)_{t\in \R}$ of connections on $P$, 
\end{enumerate}
such that $f_B^*(B)$ is induced by the family $\Ag$ via the construction above. In particular one has
$$\dot\Ag(t)=\iota_{\frac{d}{dt}}F_{B}^{\R X}= \iota_{\frac{d}{dt}}F_{B}\hbox{ (regarded as a 1-form on }X)\ .
$$
\end{pr}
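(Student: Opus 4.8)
The plan is to unwind the construction described just before the statement and check that each ingredient behaves as claimed. First I would fix a base point, say $0\in\R$, and let $P:=\resto{Q}{\{0\}\times X}$. The key geometric input is parallel transport: for a connection $B$ on $Q$ and a point $x\in X$, the line $\R\times\{x\}$ is a curve in $\R\times X$ along which I can parallel transport fibres of $Q$. Since $S^1$-connections have well-defined (global, as $\R$ is contractible and complete) parallel transport, this produces for each $(t,x)$ a canonical isomorphism from $Q_{(0,x)}=P_x$ to $Q_{(t,x)}$, and assembling these gives the bundle map $f_B:\tilde P=P\times\R\to Q$ covering $\id_{\R\times X}$. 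I would note that $f_B$ is smooth by smooth dependence of ODE solutions on parameters, and that it is an $S^1$-equivariant bundle isomorphism because parallel transport commutes with the principal action.

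Next I would analyse the pulled-back connection $f_B^*(B)$ on $\tilde P=P\times\R$. By the very definition of $f_B$ via parallel transport in the $\R$-direction, the curves $t\mapsto f_B(p,t)$ are horizontal for $B$; equivalently, the vector field $\frac{d}{dt}$ on $\tilde P$ (the infinitesimal generator of the $\R$-translations on the first factor, suitably lifted) is $f_B^*(B)$-horizontal. This says precisely that the $dt$-component of the connection form of $f_B^*(B)$ vanishes, i.e. $f_B^*(B)$ is in temporal gauge. Restricting the connection form of $f_B^*(B)$ to each slice $\{t\}\times P\cong P$ defines a smooth family $\Ag=(A_t)_{t\in\R}$ of connections on $P$ (smoothness again from smooth parameter dependence), and by construction $f_B^*(B)$ is exactly the temporal-gauge connection $\tilde A$ associated with $\Ag$ as in the preliminary discussion. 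This gives points (1) and (2) of the statement.

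Finally, for the displayed formula I would simply invoke the computation of the curvature of a temporal-gauge connection recorded earlier in the Appendix: for $\tilde A$ built from a family $\Ag$ one has $F_{\tilde A}^{\R X}\big(\frac{d}{dt},\xi\big)=\dot\Ag(t)(\xi)$, that is $\iota_{\frac{d}{dt}}F_{\tilde A}=\dot\Ag(t)$ as a $1$-form on $X$. Since $f_B$ is a connection isomorphism, $F_{f_B^*(B)}=f_B^*(F_B)$, and $f_B$ is the identity on the base, so $\iota_{\frac{d}{dt}}F_B$ (viewed as a $1$-form on $X$ via the slice identification) equals $\iota_{\frac{d}{dt}}F_{\tilde A}=\dot\Ag(t)$, which is the asserted identity; the equality of $\iota_{\frac{d}{dt}}F_B$ with its $\R X$-component is immediate since the purely vertical $XX$-part is annihilated by $\iota_{\frac{d}{dt}}$.

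The only real subtlety — and the step I expect to need the most care — is verifying that parallel transport along the non-compact lines $\R\times\{x\}$ produces a genuinely smooth global trivialization $f_B$, uniformly in $x$; this is a standard fact about linear ODEs with smooth parameter dependence on a complete (contractible) base, so once it is cleanly stated the rest is bookkeeping. Everything else is a direct translation of definitions, using the curvature formula for temporal-gauge connections already established above.
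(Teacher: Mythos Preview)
Your proposal is correct and follows essentially the same route as the paper: the identification $f_B$ is built by parallel transport along the lines $\R\times\{x\}$, the temporal-gauge property is read off from the horizontality of $\frac{d}{dt}$, the family $\Ag$ is obtained by restriction to the slices $\{t\}\times P$, and the displayed formula is the curvature computation for temporal-gauge connections recorded just before the Proposition. Your write-up is slightly more explicit (smooth dependence on parameters, the remark that the $XX$-component is killed by $\iota_{\frac{d}{dt}}$), but the argument is the same.
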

\vspace{4mm}

A smooth 1-parameter family of  Hermitian connections of type (1,1) on a compact complex manifold $X$ defines a path in  $\Pic(X)$. The velocity of this path can be computed using

\begin{lm} \label{path-Pic} Suppose that $X$ is a compact complex manifold, $L$ a Hermitian line bundle on $X$  and $\Ag=(A_t)_{t\in \R}$ is  a smooth family of Hermitian connections of type (1,1) on $L$. Let ${\cal L}_t$ be the holomorphic line bundle defined by the Dolbeault operator $\bar\partial_{A_t}$ on $L$ and $[{\cal L}_t]$ the corresponding point in $\Pic(X)$. Then
$$\frac{d}{dt} [{\cal L}_t]=[\dot\Ag(t)^{0,1}]\in H^1(X,{\cal O}_X)\ .
$$
\end{lm}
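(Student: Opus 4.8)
The plan is to reduce the statement to Proposition \ref{family} applied to the Poincaré-type situation, identifying the relevant path in $\Pic(X)$ with a $1$-parameter family of Hermitian connections and computing its velocity via the Kodaira--Spencer description of the tangent space of $\Pic(X)$. First I would recall that $\Pic^0(X)\simeq H^1({\cal O}_X)/2\pi i H^1(X,\Z)$ carries the complex Lie group structure induced from $H^1({\cal O}_X)$, so that the tangent space of $\Pic(X)$ at any point is canonically $H^1(X,{\cal O}_X)$; concretely, if ${\cal L}_t$ is a smooth family of holomorphic line bundles given by a family of Dolbeault operators $\bar\partial_{A_t}=\bar\partial_{A_0}+\beta_t$ with $\beta_t\in A^{0,1}(X)$, $\beta_0=0$, then $\frac{d}{dt}\big\vert_{t=0}[{\cal L}_t]$ is the Dolbeault cohomology class $[\dot\beta_0]\in H^1(X,{\cal O}_X)$. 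This is the standard Kodaira--Spencer computation for line bundles and I would invoke it (or prove it in one line: the family is classified by a map of a disk into $\Pic(X)$, whose differential at the origin sends $\frac{d}{dt}$ to the class represented by the derivative of the $(0,1)$-part of the connection, since the Dolbeault cohomology class of $\beta_t$ is exactly the image of $[{\cal L}_t]$ under the identification above).

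Next I would observe that for a Hermitian connection $A_t$ of type $(1,1)$ on $L$, the associated Dolbeault operator is $\bar\partial_{A_t}=(d_{A_t})^{0,1}$, so writing $A_t = A_0 + a_t$ with $a_t\in iA^1(X)$, $a_0=0$, we have $\bar\partial_{A_t} = \bar\partial_{A_0} + a_t^{0,1}$, i.e. $\beta_t = a_t^{0,1}$. Differentiating at $t$ gives $\dot\beta_t = \dot a_t^{0,1} = (\dot\Ag(t))^{0,1}$, since by definition $\dot\Ag(t)=\dot a_t\in iA^1(X)$ (the derivative of the affine path of connections, as described in the Appendix before Proposition \ref{family}). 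Combining with the Kodaira--Spencer formula from the previous paragraph yields $\frac{d}{dt}[{\cal L}_t]=[(\dot\Ag(t))^{0,1}]\in H^1(X,{\cal O}_X)$, which is exactly the claim. I should also note that $(\dot\Ag(t))^{0,1}$ is $\bar\partial$-closed: indeed the family stays inside the type-$(1,1)$ locus, and differentiating the condition that $F_{A_t}^{0,2}=0$ gives $\bar\partial(\dot\Ag(t)^{0,1})=0$, so the cohomology class on the right is well defined.

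The only genuine subtlety, and the step I expect to require the most care, is the passage from the honest holomorphic map $[0,\varepsilon)\to\Pic(X)$, $t\mapsto[{\cal L}_t]$ to its derivative: one must check that the naive ``derivative of the $(0,1)$-part of the connection form'' does compute the differential of this map into the complex Lie group $\Pic(X)$, rather than some other representative. This is where I would use the explicit identification of $\Pic(X)$ with $H^1({\cal O}_X)$ modulo a lattice (from the discussion around the exact sequences in Section \ref{variation}) together with the fact that the Dolbeault class of the difference cocycle of ${\cal L}_t$ and ${\cal L}_0$ is precisely $[\beta_t]$ up to first order; alternatively, one can invoke the base-change/holomorphy of such families already used in the paper (Proposition 2.4 of \cite{Gri}) to guarantee that $t\mapsto[{\cal L}_t]$ is $C^1$ and that its velocity is the stated Dolbeault class. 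Everything else is a routine unwinding of definitions, and no estimates or hard analysis are needed.
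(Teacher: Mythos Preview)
Your argument is correct and follows essentially the same route as the paper: write $\bar\partial_{A_t}=\bar\partial_{A_0}+a_t^{0,1}$, differentiate, and identify the velocity of $t\mapsto[{\cal L}_t]$ with the Dolbeault class $[\dot\Ag(t)^{0,1}]$. The paper makes the ``subtlety'' you flag completely explicit by recalling the gauge-theoretic description $\Pic^c(X)\simeq{\cal H}(L)/{\cal G}^\C$ (integrable semiconnections modulo the complex gauge group), from which $T_{[\delta]}\Pic^c(X)\simeq Z^{0,1}_{\bar\partial}/B^{0,1}_{\bar\partial}=H^1({\cal O}_X)$ is immediate and the compatibility with the classical identification is tautological; this is cleaner than invoking Kodaira--Spencer or base-change. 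One small correction: your opening sentence about ``reducing to Proposition~\ref{family}'' is misplaced --- that proposition is not used in this lemma at all (it enters only in the subsequent Proposition~\ref{diff-curv}, where one passes from a connection on $Y\times X$ to the family $(A_t)$).
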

\begin{proof}
The claim is an easy consequence of the gauge theoretical interpretation of $\Pic(X)$. We recall that a semiconnection on $L$ is a  $\C$-linear first order operator $\delta:A^0(L)\to A^{0,1}(L)$ satisfying the usual Leibniz rule with respect to multiplication with smooth complex functions \cite{LT}. A semiconnection $\delta$ on $L$ is called integrable (or a Dolbeault operator on $L$)  if the 0-order operator $F_\delta^{0,2}:=\delta\circ\delta \in A^{0,2}(X)$  vanishes. An integrable semiconnection $\delta$ defines a holomorphic structure ${\cal L}_\delta$ on $L$ whose associated locally free sheaf is given by
$${\cal L}_\delta(U):=\{\sigma\in\Gamma(U,L)|\ \delta\sigma=0\}$$
for open sets $U\subset X$.  
Putting $c:=c_1(L)$, the connected component  $\Pic^c(X)$ of $\Pic(X)$ can be identified \cite{LT} with the quotient
\begin{equation}
{\cal M}(L):=\qmod{{\cal H}(L)}{{\cal G}^\C}\ ,
\end{equation}
where ${\cal H}(L)$ is the space of integrable semiconnections  
on $X$ and ${\cal G}^\C$ is the complex gauge group acting on ${\cal H}(L)$ by
$$ f\cdot \delta= f\circ \delta\circ f^{-1}=\delta-(\bar\partial f) f^{-1}\ .
$$

Using suitable Sobolev completions and local slices for the ${\cal G}^\C$-action,  the quotient ${{\cal H}(L)}/{{\cal G}^\C}$ can be endowed with a natural structure of a complex manifold. The tangent space $T_\delta({\cal H})$ at ${\cal H}$ in $\delta\in{\cal H}$ is the space $Z^{0,1}_{\bar\partial}$ of $\bar\partial$-closed (0,1)-forms, and  the tangent space $T_\delta({\cal G}^\C\cdot\delta)$ at the orbit ${\cal G}^\C\cdot\delta$ is the subspace $B^{0,1}_{\bar\partial}$ of $\bar\partial$-exact (0,1)-forms. Therefore the tangent space $T_{[\delta]}({\cal M}(L))$ can be naturally identified with $H^1({\cal O}_X)$ via the Dolbeault isomorphism.  The point is that the obtained isomorphism $T_{[\delta]}({\cal M}(L))\to H^1({\cal O}_X)$ is precisely the differential of the identification ${\cal M}(L)\textmap{\simeq}\Pic^c(X)$.

To complete the proof it suffices to note that, by definition, the line bundle ${\cal L}_t$ coincides with ${\cal L}_{\bar\partial_{A_t}}$, hence
$$\frac{d}{dt} [{\cal L}_t]=\left[\frac{d}{dt}(\bar\partial_{A_t})\right]=\left[\left(\frac{d}{dt}  A_t\right)^{0,1}\right]=[\dot\Ag(t)^{0,1}]\ .
$$
\end{proof}

Combining Proposition \ref{family} with Lemma \ref{path-Pic} we obtain the following differential geometric method to compute the differential of a holomorphic map induced by a family of holomorphic line bundles. In order to state this result we introduce the following notation: for two  differentiable manifolds $Y$, $X$,  a 2-form $F$  on $Y\times X$,  a point $y\in Y$ and a tangent vector $v\in T_yY$ we denote by $\iota_v F$ the 1-form on $X$ defined by
$$\iota_v F(\xi):=F(v,\xi)\ \forall x\in X\ \forall \xi\in T_x(X)\ .
$$
\begin{pr}\label{diff-curv}
Let $X$, $Y$  be  complex manifolds with $X$ compact, ${\cal L}$ a holomorphic line bundle on $Y\times X$ and $\varphi:Y\to\Pic(X)$ the induced holomorphic map. Fix a Hermitian metric $h$ on ${\cal L}$ and let $F_{{\cal L},h}\in iA^{1,1}(Y\times X)$ be the curvature of the corresponding Chern connection $A_{{\cal L},h}$.
For any point $y\in Y$ and tangent vector $v\in T_y^\R Y$ one has (via the Dolbeault isomorphism)
$$d\varphi(v)=[\iota_{v}(F_{{\cal L},h})^{0,1}]\in H^1(X,{\cal O}_X)\ .$$
 \end{pr}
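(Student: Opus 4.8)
The plan is to reduce Proposition \ref{diff-curv} to the differential-geometric machinery already assembled in this Appendix, namely Proposition \ref{family} and Lemma \ref{path-Pic}, by restricting everything to a path in $Y$. First I would fix $y\in Y$ and $v\in T^\R_yY$, and choose a smooth curve $\gamma:\R\to Y$ with $\gamma(0)=y$ and $\dot\gamma(0)=v$. Pulling back the data along $\gamma\times\id_X:\R\times X\to Y\times X$, I obtain a Hermitian line bundle $L:=(\gamma\times\id_X)^*{\cal L}$ on $\R\times X$ with metric $(\gamma\times\id_X)^*h$ and Chern connection $B:=(\gamma\times\id_X)^*A_{{\cal L},h}$; by naturality of curvature, $F_B=(\gamma\times\id_X)^*F_{{\cal L},h}$. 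The key point here is that the Chern connection of a pullback metric is the pullback of the Chern connection, since pullback commutes with the $\partial$/$\bar\partial$ decomposition and with the metric-compatibility condition.

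Next I would invoke Proposition \ref{family} applied to the $S^1$-bundle $Q$ underlying $L$ and the connection $B$: this produces a temporal gauge $f_B:\tilde P\textmap{\simeq}Q$ and a smooth $1$-parameter family $\Ag=(A_t)_{t\in\R}$ of Hermitian connections on the bundle $P=\resto{Q}{\{0\}\times X}$, with $\dot\Ag(0)=\iota_{d/dt}F_B$ viewed as a $1$-form on $X$. Because $B$ is the Chern connection of a holomorphic line bundle, it is of type $(1,1)$, and this property is preserved under the temporal-gauge identification (the temporal gauge only modifies things in the $\R$-direction), so each $A_t$ is a Hermitian connection of type $(1,1)$ on $P$, putting us exactly in the hypotheses of Lemma \ref{path-Pic}. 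Moreover the holomorphic line bundle ${\cal L}_t$ defined by $\bar\partial_{A_t}$ is isomorphic to $\resto{{\cal L}}{\{\gamma(t)\}\times X}$, so the path $t\mapsto[{\cal L}_t]$ in $\Pic(X)$ is precisely $t\mapsto\varphi(\gamma(t))$.

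Then I would apply Lemma \ref{path-Pic} to get $\frac{d}{dt}[{\cal L}_t]=[\dot\Ag(t)^{0,1}]\in H^1(X,{\cal O}_X)$. Evaluating at $t=0$ and combining with the identity $\dot\Ag(0)=\iota_{d/dt}F_B = \iota_{d/dt}(\gamma\times\id_X)^*F_{{\cal L},h}=\iota_v F_{{\cal L},h}$ (the last equality because $(\gamma\times\id_X)_*\frac{d}{dt}\big|_0 = v$ and the $X$-directions are untouched), I obtain $d\varphi(v)=\frac{d}{dt}\big|_0\varphi(\gamma(t))=[(\iota_v F_{{\cal L},h})^{0,1}]$, which is the assertion. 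The only genuinely delicate point — and the step I would spell out most carefully — is verifying that the temporal gauge of Proposition \ref{family} preserves the type $(1,1)$ condition so that Lemma \ref{path-Pic} truly applies; everything else is naturality of pullbacks and chasing the identification ${\cal M}(L)\simeq\Pic^c(X)$ through a one-parameter deformation.
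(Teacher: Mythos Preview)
Your proposal is correct and follows essentially the same route as the paper's proof: pull back along a path $\gamma$ with $\dot\gamma(0)=v$, apply Proposition~\ref{family} to obtain a temporal-gauge family $(A_t)$, identify ${\cal L}_t\simeq{\cal L}_{\gamma(t)}$, and conclude via Lemma~\ref{path-Pic}. The paper handles the delicate point you flag by observing that the parallel-transport isomorphism $f_t$ carries $A_t$ to the Chern connection of $({\cal L}_{\gamma(t)},h_{\gamma(t)})$, which immediately forces $A_t$ to be of type $(1,1)$.
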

 \begin{proof} For $y\in Y$ we denote by ${\cal L}_y$ the holomorphic line bundle on $X$ defined by the restriction of ${\cal L}$ to $\{y\}\times X$ and by $h_y$ the Hermitian metric on ${\cal L}_y$ induced by $h$. Therefore one has
 $$\varphi(y)=[{\cal L}_y]\ \forall y\in Y\ .
 $$
 Let $\gamma:\R\to Y$ be any differentiable path such that $\dot\gamma(0)=v$ and put $f:=\gamma\times\id_X:\R\times X\to Y\times X$.  The pull-back connection $f^*(A_{{\cal L},h})$ is a connection on the Hermitian line bundle $f^*({\cal L},h)$ on $\R\times X$ hence, by Proposition \ref{family}  it defines a smooth family $\Ag=(A_t)_{t\in\R}$ of Hermitian connections on a fixed Hermitian line bundle $(L_0,h_0)$ on $X$. The point is that the explicit construction of this family gives for every $t\in\R$ a Hermitian isomorphism  
 $$f_t:(L_0,h_0)\to ({\cal L}_{\gamma(t)},h_{\gamma(t)})$$ 
 such that $(f_t)_*(A_t)$ coincides with the Chern connection $A_{\gamma(t)}$ of   $({\cal L}_{\gamma(t)},h_{\gamma(t)})$. In particular the holomorphic structure defined by the Dolbeault operator (integrable semiconnection) $\bar\partial_{A_t}$ on $L_0$ is isomorphic to 
 ${\cal L}_{\gamma(t)}$. It suffices to apply  Lemma \ref{path-Pic} to the family $\Ag$  using the formula for $\dot\Ag(t)$ given by Proposition \ref{family}.
 \end{proof}

 Let $Y$, $X$ be arbitrary differentiable manifolds and $\psi$ a  closed 2-form on $Y\times X$. We denote by $\psi^X$ the map $\psi^X:Y\to Z^2(X)$ obtained by restricting $\psi$ to the fibers $\{y\}\times X\simeq X$.
 \begin{re} \label{closed1} Let $y\in Y$ and $v\in T_yY$. Suppose that $\psi$ is closed and $d(\psi^X)(v)=0$. Then the 1-form $\iota_{v}\psi$ on $X$ is closed.
\end{re}

Indeed, choose tangent fields $\alpha$, $\beta\in {\cal X}(X)$ with $[\alpha,\beta]=0$ and   $\gamma\in{\cal X}(Y)$ with $\gamma_y=v$. Denoting by $\tilde\alpha$, $\tilde\beta$, $\tilde\gamma$ the corresponding fields on $Y\times X$ our assumptions imply
$$0=d\psi(\tilde \alpha,\tilde \beta,\tilde \gamma)=\tilde\alpha(\psi(\tilde\beta,\tilde\gamma))-\tilde\beta(\psi(\tilde\alpha,\tilde\gamma))+\tilde\gamma(\psi(\tilde\alpha,\tilde\beta))
$$
$$0=\{d(\psi^X)(v)\}(\alpha,\beta):=v(\psi((\tilde \alpha,\tilde \beta)) \ .
$$
Restricting the first formula to $\{y\}\times X$ we get 
$$\alpha(\iota_v\psi(\beta))-\beta(\iota_v\psi(\alpha))= -v(\psi((\tilde \alpha,\tilde \beta))=0\ ,$$
hence $d(\iota_{v}\psi)(\alpha,\beta)=0$.
\begin{re} \label{closed2}
In the conditions and with the notations of Proposition \ref{diff-curv} suppose that $v\in T_y^\R Y$ is tangent to a real submanifold $Y_0\subset Y$ on which the map %
$$y\mapsto F_{{\cal L}_y,h_y}\in i Z^2(X,\R)$$
 is constant. Then $d\varphi(v)$ belongs to the image of $iH^1(X,\R)$ in $H^1(X,{\cal O}_X)$ via the canonical map $\hg$, and the element in $iH^1(X,\R)$ corresponding to $d\varphi(v)$ is the Rham class of the imaginary 1-form $\iota_{v}(F_{{\cal L},h})$ (form which is closed by Remark \ref{closed1}).
\end{re}

This is a direct consequence of Proposition \ref{diff-curv} taking into account Lemma  \ref{Lie}, formula (\ref{hg}) and  Remark \ref{closed1}.

\subsection{Meromorphic maps with values in Grassmann manifolds associated  with a holomorphic vector bundle}

\begin{pr} \label{ev} Let $Z$ be a compact complex manifold  and ${\cal F}$ a rank $r$-holomorphic vector bundle on $Z$. For  every $z\in Z$ let 
$$\mathrm{ev}_z:V:=H^0({\cal F})\to {\cal F}(z)\ .
$$
be the evaluation map at $z$. Suppose that $\ker(\ev_z)\ne\{0\}$ for all $z\in Z$, and let 
$$k:=\min\{\dim(\ker(\ev_z))|\ z\in Z\}>0$$
 be the generic (hence the minimal) dimension of these kernels. Then the map $z\mapsto  \ker(\ev_z)$ defines a   non-constant meromorphic map $\kg:Z\dasharrow  \mathrm{Gr}_k(V)$ in the Grassmann manifold of $k$-planes  of $V$.
\end{pr}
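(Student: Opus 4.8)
The plan is to exhibit $\kg$ explicitly by choosing a basis of $V$ and then checking that the resulting rational map is well-defined as a meromorphic map. Let $v_1,\dots,v_N$ be a basis of $V=H^0(\cal F)$. For each $z\in Z$, evaluation gives sections $v_i(z)\in\cal F(z)$, and $\ker(\ev_z)$ is the space of linear relations among the $v_i(z)$. Over an open set $U\subset Z$ where $\cal F$ is trivialized, $\ev_z$ is represented by an $r\times N$ matrix $M(z)$ depending holomorphically on $z$, and $\ker(\ev_z)=\ker M(z)$. The condition that the generic (= minimal) kernel dimension equals $k$ means $\rk M(z)=N-k$ generically, so $M(z)$ has a non-vanishing $(N-k)\times(N-k)$ minor on a dense Zariski-open set $Z_0\subset Z$, and on $Z_0$ the kernel $\ker M(z)$ varies holomorphically. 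This defines a holomorphic map $Z_0\to\mathrm{Gr}_k(V)$; since $\mathrm{Gr}_k(V)$ is projective (a closed submanifold of a projective space via Plücker), the closure of the graph in $Z\times\mathrm{Gr}_k(V)$ is an analytic subset, and by Remmert's proper mapping theorem its projection to $Z$ is proper and surjective — i.e. we get a meromorphic map $\kg:Z\dasharrow\mathrm{Gr}_k(V)$ in the sense used by the paper.

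First I would make the local holomorphy of $z\mapsto\ker M(z)$ on $Z_0$ precise: on an open set where a fixed $(N-k)$-tuple of columns is independent, one solves for the remaining $k$ columns' contributions by Cramer's rule, obtaining $k$ holomorphic sections of the trivial $V$-bundle that span $\ker M(z)$ pointwise; these give a holomorphic frame of a rank-$k$ subbundle, hence a holomorphic classifying map to $\mathrm{Gr}_k(V)$. Patching over different choices of the $(N-k)$-tuple is automatic because the subspace $\ker M(z)$ is intrinsic. Then I would take the graph $\Gamma_0:=\{(z,\kg(z)):z\in Z_0\}\subset Z\times\mathrm{Gr}_k(V)$ and let $\Gamma$ be its closure; $\Gamma$ is a closed analytic subset of $Z\times\mathrm{Gr}_k(V)$ because $Z\setminus Z_0$ is a proper analytic subset and one can argue by Remmert–Stein (or simply note that $\Gamma$ is the closure of an irreducible analytic subset of the open dense set $Z_0\times\mathrm{Gr}_k(V)$ inside the ambient compact complex manifold). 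The projection $\pi:\Gamma\to Z$ is proper (target of a map from a compact space factors, or $\mathrm{Gr}_k(V)$ compact) and biholomorphic over $Z_0$, which is exactly the data of a meromorphic map.

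Finally I would verify that $\kg$ is non-constant. If $\kg$ were constant, equal to a fixed $k$-plane $W\subset V$, then for every $z\in Z_0$ one would have $W\subset\ker(\ev_z)$, i.e. every section in $W$ vanishes identically at every point of $Z_0$, hence — $Z_0$ being dense and the sections holomorphic — vanishes on all of $Z$. But a nonzero global section of a vector bundle is not identically zero, so $W=\{0\}$, contradicting $k>0$. This is where the hypothesis $k>0$ (equivalently $\ker(\ev_z)\ne\{0\}$ for all $z$) is used in an essential way.

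The main obstacle, as I see it, is purely one of definitions/conventions rather than of substance: being careful about what ``meromorphic map into $\mathrm{Gr}_k(V)$'' means for a possibly non-reduced or reducible compact complex space $Z$, and making sure the closure-of-the-graph construction produces an \emph{analytic} (not merely analytically constructible) subset. Once one restricts attention, as the paper does, to the setting where $Z$ is a compact complex \emph{manifold}, Remmert's proper mapping theorem together with the local holomorphy on $Z_0$ handles this cleanly, and the indeterminacy locus of $\kg$ is contained in the proper analytic subset $\{z:\rk M(z)<N-k\}=\{z:\dim\ker(\ev_z)>k\}$. The rest — local triviality, Cramer's rule, semicontinuity of rank — is routine and I would not belabor it.
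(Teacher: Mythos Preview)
Your overall strategy and your non-constancy argument are the same as the paper's, but there is a genuine gap in the step where you pass from the holomorphic map $\kg_0:Z_0\to\mathrm{Gr}_k(V)$ to a meromorphic map on $Z$. You justify analyticity of the graph closure by invoking Remmert--Stein, but the hypothesis of that theorem is not met: you are trying to extend $\Gamma_0$ (of pure dimension $\dim Z$) across the analytic set $(Z\setminus Z_0)\times\mathrm{Gr}_k(V)$, whose dimension is $\dim(Z\setminus Z_0)+k(\dim V-k)$ and will typically be at least $\dim Z$. Your fallback (``simply note that $\Gamma$ is the closure of an irreducible analytic subset of the open dense set'') is false in general: closures of analytic sets are not analytic without a dimension or codimension hypothesis. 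So as written the argument does not produce an analytic graph, and you have not yet constructed a meromorphic map.

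The paper handles exactly this point with a device you did not use: it introduces the incidence set
\[
\Gamma:=\{(z,W)\in Z\times\mathrm{Gr}_k(V)\mid W\subset\ker(\ev_z)\},
\]
which is manifestly a closed analytic subset of $Z\times\mathrm{Gr}_k(V)$ (it is cut out by the vanishing of the evaluation morphism on the tautological subbundle). One then observes that $\Gamma\cap(Z_0\times\mathrm{Gr}_k(V))=\mathrm{graph}(\kg_0)$, so the irreducible component of $\Gamma$ containing this graph is the closure of $\mathrm{graph}(\kg_0)$ and is analytic for free. This replaces your Remmert--Stein step entirely and requires no dimension estimate. Once you make this substitution, the rest of your write-up (local holomorphy on $Z_0$ via Cramer, properness over $Z$ from compactness of $\mathrm{Gr}_k(V)$, and the non-constancy argument via $\bigcap_z\ker(\ev_z)=\{0\}$) agrees with the paper's proof.
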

\begin{proof}  Let $Z_0\subset Z$ be the Zariski-open subset of point $z\in Z$ for which it holds $\dim(\ker(\ev_z))=k$. We obtain a holomorphic  map $\kg_0:Z_0\to  \mathrm{Gr}_k(V)$ given by $\kg_0(z):=\ker(\ev_z)$.  The subset
$$\Gamma:=\{(z,W)\in Z\times  \mathrm{Gr}_k(V)|\ W\subset\ker(\ev_z)\}
$$
of  $Z\times  \mathrm{Gr}_k(V)$ is obviously analytic, proper over $Z$, and its Zariski open subset $\Gamma\cap(Z_0\times \mathrm{Gr}_k(V))$ coincides with the graph $\mathrm{graph}(\kg_0)$ of $\kg_0$.
 The irreducible component $C$ of $\Gamma$ which contains $\Gamma\cap(Z_0\times \mathrm{Gr}_k(V))$ coincides with the closure of   $\mathrm{graph}(\kg_0)$, and is an analytic subset $ Z\times  \mathrm{Gr}_k(V)$; this proves that $\kg_0$ defines a meromorphic map $\kg:Z\dasharrow  \mathrm{Gr}_k(V)$.  The analytic map $\kg_0$ cannot be constant because  if it were, there would exist a  $k$-plane $W\subset V$ such that $W\subset \ker(\ev_z)$ for every $z\in Z$. But this contradicts of course  the obvious equality $\cap_{z\in Z} \ker(\ev_z)=\{0\}$ (a section of ${\cal F}$ which vanish at every point of $Z$  must be trivial). 
\end{proof}

\begin{co} \label{kerev} Let $Z$ be a compact complex manifold, and  ${\cal F}$ a  holomorphic vector bundle on $Z$ such  that $\ker(\ev_y)\ne\{0\}$ for all $y\in Z$. Then $a(Z)>0$. \end{co}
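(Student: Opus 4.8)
The plan is to deduce Corollary \ref{kerev} directly from Proposition \ref{ev}. The hypothesis $\ker(\ev_y)\neq\{0\}$ for all $y\in Z$ is exactly the standing assumption of Proposition \ref{ev}, so that proposition produces a non-constant meromorphic map $\kg:Z\dashrightarrow \mathrm{Gr}_k(V)$, where $V=H^0({\cal F})$ and $k>0$ is the generic kernel dimension. The only thing that needs to be added is the observation that the existence of such a non-constant meromorphic map to a projective variety forces $a(Z)>0$.

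First I would recall that the algebraic dimension $a(Z)$ is the transcendence degree over $\C$ of the field ${\cal M}(Z)$ of meromorphic functions on $Z$, and that $a(Z)>0$ is equivalent to the existence of a single non-constant meromorphic function on $Z$. So it suffices to extract one non-constant meromorphic function from the map $\kg$. The clean way is to pull back: the Grassmannian $\mathrm{Gr}_k(V)$ is a projective manifold, hence has algebraic dimension equal to its (positive) complex dimension, so it carries non-constant meromorphic (indeed rational) functions; alternatively, compose the Plücker embedding $\mathrm{Gr}_k(V)\hookrightarrow \P(\Lambda^k V)$ with a ratio of two homogeneous coordinates. Since $\kg$ is non-constant, its image is a positive-dimensional irreducible analytic (in fact algebraic) subset of $\mathrm{Gr}_k(V)$, so one can choose a rational function $\varphi$ on $\mathrm{Gr}_k(V)$ whose restriction to this image is non-constant and whose polar set does not contain the image. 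Then $\varphi\circ\kg$ is a well-defined non-constant meromorphic function on $Z$: working on the closure $C$ of $\mathrm{graph}(\kg_0)$ inside $Z\times\mathrm{Gr}_k(V)$, which is a compact irreducible analytic set proper over $Z$ and bimeromorphic to $Z$ via the first projection, one pulls $\varphi$ back along the second projection to get a meromorphic function on $C$, hence on $Z$; it is non-constant because $\kg_0$ is non-constant and $\varphi$ is chosen non-constant on the image.

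I expect essentially no real obstacle here; the one point requiring a line of care is the choice of $\varphi$ so that $\varphi\circ\kg$ is genuinely non-constant and not identically $\infty$ — this is handled by picking two elements of a basis of $\Lambda^k V$ on which the image of $\kg_0$ is not entirely contained in the corresponding hyperplanes, which is possible precisely because the image is positive-dimensional and irreducible. An even shorter route, if one prefers to avoid Plücker coordinates entirely, is to invoke the general fact (see e.g.\ the discussion of algebraic dimension in \cite{CP} or \cite{Fi}) that if there is a non-constant meromorphic map from a compact complex manifold $Z$ to a Moishezon (in particular projective) variety, then $a(Z)>0$, and apply it with $\mathrm{Gr}_k(V)$. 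Either way the corollary follows immediately from Proposition \ref{ev}, so the proof is just the two sentences: invoke Proposition \ref{ev} to get the non-constant meromorphic map $\kg$, then note that composing with a suitable rational function on the projective target yields a non-constant meromorphic function on $Z$, whence $a(Z)>0$.
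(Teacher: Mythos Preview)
Your proposal is correct and follows essentially the same route as the paper: invoke Proposition \ref{ev} to obtain the non-constant meromorphic map $\kg:Z\dashrightarrow\mathrm{Gr}_k(V)$, then use that the target is projective to conclude $a(Z)>0$. The paper phrases the second step by passing to the image $Z'\subset\mathrm{Gr}_k(V)$ of $\kg$ and applying the inequality $a(Z)\geq a(Z')$ from \cite{CP}, which is exactly your ``shorter route''; your alternative via pulling back a rational function through the Pl\"ucker embedding is an equally valid unpacking of the same idea.
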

Recall that the condition $a(Z)>0$ means that $Z$ has non-constant meromorphic functions. Equivalently, $Z$ is covered by effective divisors (see \cite{CP} p. 331).
\begin{proof} With the notations introduced in the proof of Proposition \ref{ev}, denote by $Z'\subset  \mathrm{Gr}_k(V)$ the projection on the $\mathrm{Gr}_k(V)$ of the irreducible compact space $C$, which is the graph of the meromorphic map $\kg$. In other words $Z'$ is the image of  $\kg$. We have a surjective meromorphic map $Z\dasharrow Z'$, so, using  Proposition 2.23 in  p. 332 \cite{CP}, we obtain  $a(Z)\geq  a(Z')$. But  $a(Z')>0$ because $Z'$ is algebraic projective of non-vanishing dimension.  
\end{proof} 

\subsection{Families containing both filtrable and non-filtrable bundles}

In this section we will prove that if $X$ is a compact complex surface with $a(X)=0$, $Z$ a compact complex manifold and ${\cal E}$ a holomorphic rank 2-bundle on $Z\times X$ satisfying certain properties, then $a(Z)>0$. The main ingredient is Corollary \ref{kerev} above, which shows that the  simple existence of a holomorphic  rank 2-bundle on $Z$ whose space of sections has dimension $\geq 3$ implies $a(Z)>0$.

\begin{pr} \label{filt-non-filt}
Let $X$ be a compact complex surface with $a(X)=0$, $Z$ a compact connected complex manifold and ${\cal F}$ a holomorphic rank 2-bundle on $Z\times X$ such that  
$$\{z\in Z|\ {\cal F}_z\hbox{ is filtrable}\}\ne\emptyset\ ,\ \mathrm{int}\{z\in Z|\ {\cal F}_z\hbox{ is non-filtrable}\}\ne\emptyset\ .
$$
 If $(p_X)_*({\cal F})\ne 0$ then there exists a point $x\in X$ such that  $h^0(Z,{\cal F}^x)\geq 3$. In particular, by Corollary \ref{kerev}, one has $a(Z)>0$.

\end{pr}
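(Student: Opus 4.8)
Proposition~\ref{filt-non-filt} asks us to show that, under the stated hypotheses, some fibre $\mathcal F^x := \resto{\mathcal F}{Z\times\{x\}}$ over $X$ has at least three independent sections over $Z$. The plan is to combine the hypothesis $(p_X)_*(\mathcal F)\neq 0$ with the fact that ${\cal F}$ restricts to both filtrable and non-filtrable bundles on the fibres $\{z\}\times X$, exploiting the assumption $a(X)=0$ to control the structure of the filtrable fibres.

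\textbf{Step 1: Analyse the sheaf $\mathcal G := (p_X)_*(\mathcal F)$ on $X$.} This is a nonzero torsion-free coherent sheaf on $X$; for $x$ in a Zariski-dense open set it is locally free, and its fibre at such $x$ is (contained in) $H^0(Z,\mathcal F^x)$ after base change. So it would suffice to produce one point $x$ where $\mathcal G$ has rank $\geq 3$, equivalently $h^0(Z,\mathcal F^x)\geq 3$. Thus I want to rule out $\rk(\mathcal G)\leq 2$. The generic section of $\mathcal F$ along the fibres gives, for generic $z\in Z$, a nonzero element of $H^0(X,\mathcal F_z)$, i.e. a sheaf monomorphism $\mathcal O_X \to \mathcal F_z$; since $\mathcal F_z$ has rank $2$, its saturation is a line subbundle $\mathcal L_z\hookrightarrow\mathcal F_z$ with an effective (possibly empty) divisor's worth of zeros. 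In particular \emph{every} fibre over which $\mathcal F$ has a section is filtrable in the weak sense of admitting a line subsheaf — which already shows the non-filtrable fibres contribute nothing to $(p_X)$-pushforward-type arguments, and forces the sections of $\mathcal F$ to be ``concentrated'' away from the non-filtrable locus.

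\textbf{Step 2: Use $a(X)=0$ to pin down the line subbundles.} Here is where I expect the real work. Since $\mathrm{int}\{z : \mathcal F_z\text{ non-filtrable}\}\neq\emptyset$, a generic section $\sigma\in H^0(X,\mathcal F_z)$ for a generic $z$ — which by semicontinuity extends over an open set — must \emph{vanish} on some fibres (those in the non-filtrable locus, where $h^0(X,\mathcal F_z)=0$). On a surface with $a(X)=0$ there are essentially no curves to absorb such vanishing loci in a varying family: the divisor of zeros of the induced section of $\mathcal L_z^\vee\otimes\mathcal F_z$ cannot move. The idea is that this rigidity forces the line bundles $\mathcal L_z$, together with the section data, to be governed by finitely many discrete invariants, so that the sections of $\mathcal F$ along $X$ cannot all die on the non-filtrable locus unless $\mathcal G$ has ``extra'' sections coming from a fixed point $x\in X$. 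Concretely: restricting $\mathcal F$ to $Z\times\{x\}$ for $x$ ranging over $X$, the function $x\mapsto h^0(Z,\mathcal F^x)$ is upper semicontinuous, and I want to show its generic value is $\geq 3$, or that it jumps to $\geq 3$ somewhere, by a dimension count that feeds on the incompatibility of: (a) $(p_X)_*(\mathcal F)\neq 0$ giving sections over $X$-directions, (b) non-filtrable fibres killing those same sections, and (c) $a(X)=0$ preventing the zero-loci from varying.

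\textbf{Step 3: Conclude via Corollary~\ref{kerev}.} Once a point $x\in X$ with $h^0(Z,\mathcal F^x)\geq 3$ is found, set $\mathcal F' := \mathcal F^x$, a rank $2$ bundle on the compact complex manifold $Z$ with $\dim_{\mathbb C} H^0(Z,\mathcal F')\geq 3$. Then for every $z\in Z$ the evaluation map $\ev_z : H^0(Z,\mathcal F')\to\mathcal F'(z)$ has image of dimension at most $2$, so $\ker(\ev_z)\neq\{0\}$ for all $z$. Corollary~\ref{kerev} then gives $a(Z)>0$, which is the assertion. The main obstacle is clearly Step~2 — making precise and rigorous the intuition that in a family over a surface with no curves, sections of a rank-$2$ bundle that vanish on an open set of fibres are forced to produce a ``fixed-point'' section, i.e. one factoring through $p_X^*$ of a sheaf on $X$ of rank $\geq 3$ at some point; I would expect to carry this out by a careful Grauert-semicontinuity and base-change analysis of $(p_X)_*(\mathcal F)$ and its relation to the saturations $\mathcal L_z$, using that on $X$ the only effective divisors are rigid (indeed $\mathrm{NS}(X)$ and the absence of moving curves severely restrict the $\mathcal L_z$).
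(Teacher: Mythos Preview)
Your proposal is not a proof: Step~2, which you yourself flag as ``the main obstacle,'' is left entirely at the level of heuristics. The paper's argument is a concrete case analysis that you have not found. Assuming by contradiction that $h^0(Z,{\cal F}^x)\leq 2$ for every $x$, one studies the evaluation morphism $e:p_X^*({\cal S})\to{\cal F}$ with ${\cal S}:=(p_X)_*({\cal F})$, and splits into cases according to the generic rank of ${\cal S}$ (one or two) and then, in the rank-two case, according to the generic rank of $e$. The crucial technical input you are missing is that on $Z\times X$, because $a(X)=0$ forces $X$ to have only finitely many irreducible divisors, every irreducible effective divisor is either of the form $D'\times X$ or $Z\times D''$ (this is Lemma~\ref{hor-vert}). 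This lets one remove the divisorial zero locus of $e$ and analyse the vanishing divisor of $\det(e)$, and in each subcase one restricts to slices $\{z\}\times X$ to get morphisms ${\cal S}\to{\cal F}_z$ whose rank forces either ${\cal F}_z$ filtrable for generic $z$ (contradicting the non-filtrable open set) or ${\cal F}_z$ non-filtrable for all $z$ (contradicting the existence of a filtrable point).

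Your Step~1 also contains a conceptual slip: a nonzero section of $(p_X)_*({\cal F})$ over $U\subset X$ is a section of ${\cal F}$ over $Z\times U$, and restricting to $\{z\}\times X$ gives a section of ${\cal F}_z$ only over $U$, not a global one; the correct object to restrict is the bundle map $e$ itself, yielding a morphism ${\cal S}\to{\cal F}_z$ of sheaves on all of $X$. Your intuition about ``sections vanishing on the non-filtrable locus'' and ``rigidity of zero-loci on $X$'' points in the wrong direction: the argument never tracks vanishing of sections along $Z$, but rather compares the fixed bundle ${\cal S}$ on $X$ with the varying bundles ${\cal F}_z$ via $e$, using the horizontal/vertical dichotomy for divisors on the product. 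Step~3 is fine.
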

\begin{proof}  For $x\in X$  denote by ${\cal F}^x$ the bundle on $Z$ defined the restriction
$\resto{{\cal F}}{Z\times\{x\}}$ and for $z\in Z$ denote as usually by ${\cal F}_z$ the bundle on $X$ defined the restriction
$\resto{{\cal F}}{\{z\}\times X}$.  

Since the sheaf $(p_X)_*({\cal F})$ is torsion free,  the hypothesis implies that $h^0({\cal F}^x)>0$ for every $x\in X$. We will prove the claim by reductio  ad absurdum. Therefore suppose $h^0(Z,{\cal F}^x)\leq 2$ for every $x\in X$. 
\\
\\ 
{\it Case 1}. One has $h^0(Z,{\cal F}^x)\leq 2$ for every $x\in X$, and $h^0(Z,{\cal F}^x)=2$ for generic $x\in X$.\\

By Grauert's semicontinuity theorem it follows that in this case $h^0(Z,{\cal F}^x)=  2$ for every $x\in X$, so  by Grauert's local triviality theorem  the sheaf ${\cal S}:=(p_X)_*({\cal F})$ is locally free of rank 2, and its fiber    ${\cal S}(x)$ at $x\in X$ is precisely $H^0({\cal F}^x)$. The evaluation map $e:p_X^*({\cal S}) \to{\cal F}$ on $Z\times X$ is a morphism of   locally free sheaves of rank 2. For any $x\in X$ the restriction 
$$e_x:=\resto{e}{Z\times\{x\}}:\resto{p_X^*({\cal S})}{Z\times\{x\}}=H^0({\cal F}^x)\otimes{\cal O}_Z\to {\cal F}^x$$
of $e$  to $Z\times\{x\}$ induces identity between the spaces of global sections so
\begin{equation}\label{ex}
e_x\ne 0\ \forall x\in X .
\end{equation}
In particular $e\ne 0$, so the rank of $e$ is either generically 2, or generically 1.\\
\\
{\it Case 1a }. The rank of $e$ is  generically 2.\\

The first step is to remove the divisorial component of the zero locus $V(e)$ of $e$. The morphism $e$ can be identified with a section $\sigma$ in the homomorphism bundle ${\cal H}:=p_X^*({\cal S})^\vee\otimes {\cal F}$. Denoting by $D_0$ the divisorial component of the vanishing locus of $\sigma_e$, it follows that $\sigma_e$ factorizes through a section $\sigma_e^0$ in the subsheaf
$${\cal H}(-D_0)=p_X^*({\cal S})^\vee\otimes ({\cal F}(-D_0))  $$ 
of ${\cal H}$. The section $\sigma_e^0$  defines a morphism 
$$e^0:p_X^*({\cal S})\to {\cal F}^0:={\cal F}(-D_0)$$
 whose vanishing locus $V(e_0)$ contains no divisorial component.

Note that the divisor $D_0$ cannot contain any horizontal  irreducible component (i.e., a component   of the form $Z\times D''$) because by (\ref{ex}) the set $V(e)$ does not contain any slice  $Z\times \{x\}$. Therefore, by Lemma \ref{hor-vert} below, one has $D_0=D'_0\times X$ for an effective divisor $D'_0\subset Z$. This shows that
\begin{equation}\label{F0}
{\cal F}^0\simeq {\cal F}\otimes p_Z^*({\cal O}_{Z}(-D'_0))\ ,
\end{equation}
which implies
\begin{equation}\label{FF_0}
{\cal F}^0_z\simeq {\cal F}_z\ \forall z\in Z\ .
\end{equation}
In other words ${\cal F}$, ${\cal F}^0$ are equivalent families of bundles on $X$ parameterized by $Z$.

The morphism 
$$\det(e_0):\wedge^2(p_X^*({\cal S}))\to \wedge^2({\cal F}_0)$$
defines a {\it nontrivial} section of a holomorphic line bundle over $Z\times X$. Let $\Delta$ be the  effective divisor defined by this section. Using Lemma \ref{hor-vert} write
$$\Delta=\Delta'\times X+Z\times \Delta''
$$
for effective divisors $\Delta'\subset Z$, $\Delta''\subset X$.
Restricting   $e^0$ to  $\{z\}\times X\simeq X$ for $z\not\in \Delta'$ we get a morphism 
$${\cal S}\to  {\cal F}^0_z\simeq {\cal F}_z$$
 whose restriction to $X\setminus \Delta''$ is an isomorphism.  Therefore, for any $z\in Z\setminus \Delta'$ the bundles ${\cal S}$, ${\cal F}_z$ have the same type (filtrable or non-filtrable). Since we assumed that ${\cal F}_z$ is non-filtrable for any $z$ in a non-empty open subset of $Z$, it follows that ${\cal S}$ is necessarily non-filtrable.

If we take now a point $z\in \Delta'$ we obtain   a morphism
$${\cal S}\to  {\cal F}^0_z\simeq {\cal F}_z
$$
whose rank this time is $\leq 1$ (because $\det(e^0)$ vanishes on $\Delta'\times X$), but is certainly non-zero for generic $z\in \Delta'$ because the vanishing locus of $e^0$ contains no divisorial component. Therefore, if $\Delta'$ were non-empty, it would follow  that ${\cal S}$ is filtrable, because it contains the rank 1 kernel of the morphism ${\cal S}\to  {\cal F}^0_z$ for generic $z\in \Delta'$. This would lead to a contradiction, hence  $\Delta'=\emptyset$.  But this implies that ${\cal F}_z$ is non-filtrable (as is ${\cal S}$) for every $z\in Z$, which is impossible, because some of these bundles are  filtrable by hypothesis.
\\ 
\\
{\it Case 1b }. The rank of $e$ is  generically 1.\\

In this case the morphism ${\cal S}\to   {\cal F}_z$ obtained by restricting $e$ to $\{z\}\times X$ is non-zero (and with rank 1 kernel) for generic $z\in Z$. But this would imply that    the bundles ${\cal F}_z$ are filtrable for generic $z\in Z$, which contradicts our hypothesis. 
\\ \\
{\it Case 2}. One has $h^0(Z,{\cal F}^x)\leq 1$ for generic $x\in X$.\\

In this case ${\cal S}:=R^0(p_X)_*({\cal F})$ is a  torsion free rank 1 sheaf on $X$. Let $X_0\subset X$ the dense Zariski open subset of points $x\in X$ for which $h^0({\cal F}^x)=1$. The sheaf ${\cal S}$ is locally free of rank 1 on $X_0$, and ${\cal S}(x)=H^0({\cal F}^x)$ for every $x\in X_0$ by Grauert's local triviality and base change theorems.  

The restriction $e_x$ of the evaluation map $e:p_X^*({\cal S})\to {\cal F}$ to $Z\times\{x\}$  is non-zero for every $x\in X_0$, hence  the restriction $\resto{e}{Z\times X_0}:\resto{{\cal S}}{Z\times X_0}\to \resto{{\cal F}}{Z\times X_0}$ does not vanish identically, hence its zero set   is a  proper analytic subset of $Z\times X_0$. This shows that, for generic $z\in Z$, the induced map $\resto{\cal S}{X_0}\to \resto{{\cal F}_z}{X_0}$ is non-zero. Therefore,   for generic $z\in Z$, ${\cal F}_z$ contains the rank 1-subsheaf $e_z({\cal S})$, so it is filtrable, which again contradicts the assumption.
\end{proof}

\begin{lm}\label{hor-vert}
Let $Z$, $X$ be connected, compact, complex manifolds, and suppose that $X$ has only finitely many irreducible divisors. Then any irreducible effective divisor $D\subset Z\times X$ has either the form $D'\times X$ for an irreducible effective divisor $D'$ of $Z$, or the form $Z\times D''$ for an irreducible effective divisor $D''$ of $X$.
\end{lm}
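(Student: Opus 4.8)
The plan is to analyse $D$ by means of the two projections $p_Z\colon Z\times X\to Z$ and $p_X\colon Z\times X\to X$. Since $D$ is compact, Remmert's proper mapping theorem shows that $p_Z(D)$ and $p_X(D)$ are analytic subsets of $Z$ and $X$, and they are irreducible because $D$ is. I would then distinguish cases according to whether these images exhaust $Z$, respectively $X$.

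Suppose first $p_Z(D)\ne Z$ (the case $p_X(D)\ne X$ being symmetric). Then $D':=p_Z(D)$ is irreducible with $\dim D'\le\dim Z-1$, and clearly $D\subseteq p_Z^{-1}(D')=D'\times X$. The space $D'\times X$ is irreducible (a connected manifold times an irreducible space) and has dimension $\dim D'+\dim X\le\dim Z+\dim X-1=\dim D$; since the inclusion $D\subseteq D'\times X$ forces the reverse inequality, these two irreducible sets have equal dimension, hence coincide. Thus $D=D'\times X$ with $\dim D'=\dim Z-1$, so $D'$ is an irreducible effective divisor of $Z$; symmetrically, $p_X(D)\ne X$ gives $D=Z\times D''$ with $D''$ an irreducible effective divisor of $X$. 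This part uses nothing about the divisors of $X$.

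It remains to derive a contradiction from the assumption that $p_Z(D)=Z$ and $p_X(D)=X$ both hold, and this is where the finiteness hypothesis enters. Since $D$ is irreducible, $Z$ is a manifold, and $p_Z|_D\colon D\to Z$ is proper and surjective, the theorem on fibre dimensions of holomorphic maps yields a proper analytic subset $A\subsetneq Z$ such that for $z\in Z\setminus A$ the fibre $D_z:=D\cap(\{z\}\times X)$ is pure of dimension $\dim D-\dim Z=\dim X-1$, i.e.\ is (the support of) an effective divisor on $X$. Such a divisor is supported on the finite union $V_1\cup\cdots\cup V_N$ of \emph{all} the irreducible divisors of $X$, so $D\cap\bigl((Z\setminus A)\times X\bigr)\subseteq(Z\setminus A)\times(V_1\cup\cdots\cup V_N)$. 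As $p_Z(D)=Z$, the complement of $D\cap((Z\setminus A)\times X)$ in $D$ is contained in the analytic set $D\cap(A\times X)$, which is a proper subset of the irreducible $D$ (otherwise $p_Z(D)\subseteq A$), hence nowhere dense; so $D\cap((Z\setminus A)\times X)$ is dense in $D$, and passing to closures yields $D\subseteq Z\times(V_1\cup\cdots\cup V_N)$. Irreducibility of $D$ then forces $D\subseteq Z\times V_i$ for some $i$, and since $\dim(Z\times V_i)=\dim Z+\dim X-1=\dim D$ we conclude $D=Z\times V_i$. But then $p_X(D)=V_i\subsetneq X$, contradicting $p_X(D)=X$.

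The dimension bookkeeping and the density argument are routine, and each step rests on a standard fact (the proper mapping theorem, the fibre-dimension theorem, irreducibility of a product of a connected manifold with an irreducible space). The one point that genuinely requires the fibre-dimension theorem — and which I regard as the crux — is that the generic fibre $D_z$ is pure of codimension one, hence an honest effective divisor on $X$: it is precisely at that moment that the hypothesis ``$X$ has only finitely many irreducible divisors'' turns into the rigid constraint $\operatorname{supp}(D_z)\subseteq V_1\cup\cdots\cup V_N$ that pins $D$ into $Z\times(V_1\cup\cdots\cup V_N)$ and forces it to be a product.
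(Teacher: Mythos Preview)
Your proof is correct, and the case $p_Z(D)\ne Z$ is handled exactly as in the paper. The difference lies in the surjective case $p_Z(D)=Z$: the paper argues that the generic fibre $D_z$ is an effective divisor, then uses flatness to obtain a holomorphic map from a Zariski open $Z_0\subset Z$ into the Douady space ${\cal D}ou(X)$ of effective divisors, which is $0$-dimensional by hypothesis, hence constant; this gives $D\cap(Z_0\times X)=Z_0\times D''$ directly, and one closes up. Your argument avoids the Douady space altogether: you simply observe that the \emph{support} of each generic fibre lies in the fixed finite union $V_1\cup\cdots\cup V_N$, pass to the closure to get $D\subset Z\times(V_1\cup\cdots\cup V_N)$, and let irreducibility plus dimension counting pick out a single $Z\times V_i$. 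This is more elementary (no flatness, no moduli of divisors) and is in fact a cleaner way to exploit the finiteness hypothesis; the paper's route, on the other hand, yields the conclusion in a single dichotomy on $p_Z$ rather than your three-way split, and would generalise immediately to the weaker hypothesis that ${\cal D}ou(X)$ is merely discrete. Your contradiction framing in the third case is slightly roundabout---having shown $D=Z\times V_i$ you have already reached the desired conclusion---but the logic is sound.
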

In other words, any irreducible effective divisor $D\subset Z\times X$ is either vertical or horizontal.

\begin{proof}
 If $p_Z(D)=Z$, then the fiber of the map $\resto{p_Z}{D}:D\to Z$ is an effective divisor of $X$ for every $z$ in a non-empty Zariski open subset $Z_0\subset Z$ (see Theorem \cite{Fi} p. 140). Therefore $D\cap(Z_0\times X)$ is flat over $Z_0$ (see Lemma 1 \cite{DT}), so it defines a holomorphic map $Z_0\to {\cal D}ou(X)$, which must be constant, because our assumption on $X$ implies that the Douady moduli space ${\cal D}ou(X)$ of effective divisors of $X$ is 0-dimensional at every point. Let $D''\subset X$ be the effective divisor corresponding to this constant. We have $Z_0\times D''= D\cap(Z_0\times X)\subset D$, hence (since $D$ is closed) one has $Z\times D''\subset D$. Since $D$ is irreducible this implies that $D''$ is irreducible and $Z\times D''= D$.
 
 If $p_Z(D)\ne Z$ the underlying analytic set of $p_Z(D)$ will be an irreducible analytic subset $H\subset Z$ of pure codimension 1. The underlying analytic set of $D$ is contained in $H\times X$, hence (since $H\times D$ is irreducible) it coincides with it.
 \end{proof}

\end{document}